\documentclass[12pt, a4paper,reqno]{amsart}

\usepackage[T1]{fontenc}
\usepackage[utf8]{inputenc}

\usepackage{color}
\definecolor{bleu_sombre}{rgb}{0,0,0.6}  \definecolor{rouge_sombre}{rgb}{0.8,0,0}\definecolor{vert_sombre}{rgb}{0,0.6,0}
\usepackage[plainpages=false,colorlinks,linkcolor=bleu_sombre,
citecolor=rouge_sombre,urlcolor=vert_sombre,breaklinks]{hyperref}

\usepackage[english]{babel}
\usepackage{geometry}

\usepackage{amsmath,amssymb,amsthm,graphicx,amsfonts,url,enumerate,dsfont,stmaryrd, mathrsfs, csquotes}

\usepackage{pgf,tikz,pgfplots}
\usetikzlibrary{arrows}
\definecolor{uuuuuu}{rgb}{0.26666666666666666,0.26666666666666666,0.26666666666666666}

\theoremstyle{plain}
\newtheorem{theorem}{{Theorem}}[section]
\newtheorem*{theorem*}{{Theorem}}
\newtheorem{proposition}[theorem]{Proposition}
\newtheorem{conjecture}[theorem]{Conjecture}
\newtheorem*{proposition*}{Proposition}
\newtheorem{corollary}[theorem]{Corollary}
\newtheorem*{corollary*}{Corollary}
\newtheorem{lemma}[theorem]{Lemma}
\newtheorem{assumption}[theorem]{Assumption}
\newtheorem*{lemma*}{Lemma}
\theoremstyle{definition}

\newtheorem*{definition*}{Definition}
\theoremstyle{remark}
\newtheorem{remark}[theorem]{Remark}
\newtheorem{notation}[theorem]{Notation}

\makeatletter

\@addtoreset{equation}{section}
\makeatother

\renewcommand{\leq}{\leqslant}	\renewcommand{\geq}{\geqslant}
\def\seq#1{\left<#1\right>}
\def\sep#1{\left(#1\right)}
\def\norm#1{\left\Vert#1\right\Vert} 

\newcommand{\R}{\mathbb{R}}	
\newcommand{\C}{\mathbb{C}}
\newcommand{\N}{\mathbb{N}}	
\newcommand{\Z}{\mathbb{Z}}
\newcommand{\dd}{\mathrm{d}}
\newcommand{\ooo}{\mathscr{O}}	
\newcommand{\sss}{\mathcal{S}}
\newcommand{\eps}{\varepsilon}
\newcommand{\Dom}{\mathrm{Dom}\,}
\newcommand{\Op}{\mathrm{Op}^W_\hbar}

\renewcommand{\Re}{\mathrm{Re}\,}

\begin{document}

\title[Purely magnetic tunneling effect]{Purely magnetic tunneling effect\\ in two dimensions}

\author[V. Bonnaillie-No\"el]{Virginie Bonnaillie-No\"el}
\address[V. Bonnaillie-No\"el]{D\'epartement de math\'ematiques et applications, \'Ecole normale sup\'erieure, CNRS, Universit\'e PSL, F-75005 Paris, France}
\email{bonnaillie@math.cnrs.fr}

\author[F. H\'erau]{Fr\'ed\'eric H\'erau}
\address[F. H\'erau]{LMJL - UMR6629, Universit\'e de Nantes, CNRS, 2 rue de la Houssini\`ere, BP 92208, F-44322 Nantes cedex 3, France}
\email{herau@univ-nantes.fr}

\author[N. Raymond]{Nicolas Raymond}
\address[N. Raymond]{Laboratoire Angevin de Recherche en Mathématiques, LAREMA, UMR 6093, UNIV Angers, SFR Math-STIC, 2 boulevard Lavoisier 49045 Angers Cedex 01, France}
\email{nicolas.raymond@univ-angers.fr}

\subjclass[2010]{}

\thanks{N. R. and F. H. are deeply grateful to the Mittag-Leffler Institute where part of the ideas of this article were discussed. N. R. also thanks Bernard Helffer, Pierig Keraval and Johannes Sjöstrand for many stimulating discussions.}

\date{}

\begin{abstract}
 The semiclassical magnetic Neumann Schr\"odinger operator on a smooth, bounded, and simply connected domain $\Omega$ of the Euclidean plane is considered. When $\Omega$ has a symmetry axis, the semiclassical splitting of the first two eigenvalues is analyzed. The first explicit tunneling formula in a pure magnetic field is established. The analysis is based on a pseudo-differential reduction to the boundary and the proof of the first known optimal purely  magnetic Agmon estimates.

\end{abstract}

\maketitle


\section{Introduction}

\subsection{A long-term investigation}

\subsubsection{The magnetic Laplacian with Neumann boundary condition}
Consider $\Omega$ a smooth, open, and simply-connected set of the plane. This article is devoted to the spectral analysis of the magnetic Laplacian $\mathscr{L}_h$ defined as the self-adjoint operator associated with the quadratic form
$$ \mathscr{Q}_h(\psi)=\int_\Omega |(-ih\nabla-\mathbf{A})\psi|^2\dd x\,.$$
 defined for $\psi \in H^1_{\mathbf{A}}(\Omega) \subset L^2(\Omega)$, the set for which $\mathscr{Q}_h(\psi)$ is finite.
In this article, the magnetic field is $B=\nabla\times\mathbf{A}=1$ and, by gauge invariance, we can choose $\mathbf{A}=(0,-x_1)$. The domain of $\mathscr{L}_h$ is
\begin{multline*}
\mathrm{Dom}(\mathscr{L}_h)=\big\{\psi\in H^1_{\mathbf{A}}(\Omega) : (-ih\nabla-\mathbf{A})^2\psi\in L^2(\Omega)\,,\\
\mathbf{n}\cdot(-ih\nabla-\mathbf{A})\psi=0 \mbox{ on }  \Gamma=\partial\Omega\big\}\,,
\end{multline*}
where $\mathbf{n}$ is the outward pointing normal to the boundary. In this paper, $L$ will denote the half-length of the boundary.

\subsubsection{From superconductivity to semiclassical analysis}
The original motivation to study the spectrum of $\mathscr{L}_h$ is the mathematical study of superconductivity. In particular, the asymptotic description of the third critical field (in the large magnetic field limit) is related to the groundstate energy of $\mathscr{L}_h$. For an overview of this vast subject, the reader is referred to the book \cite{FH10}. Independently of superconductivity, the subject has acquired a life of its own (see the book \cite{Ray}). Let us only point out some contributions directly related to the present framework. In \cite{HM01}, the ground state energy is analyzed and the following asymptotic formula is established
\begin{equation}\label{eq.hm}
\lambda_1(h)=\Theta_0 h-C_1\kappa_{\max}h^{\frac 32}+o(h^{\frac 32})\,,
\end{equation}
where $\kappa_{\max}$ is the maximum of the curvature of $\Gamma$, and $\Theta_0\in(0,1)$ and $C_1>0$ are related to the de Gennes operator (see \cite[Appendix A]{HM01}). This operator is defined as follows.
Consider, for all $\xi\in\R$, $\mathfrak{L}_\xi$ the Neumann realization on $\R_+$ of the operator $D^2_t+(\xi-t)^2$. The eigenvalues of $\mathfrak{L}_\xi$ are simple and denoted by $(\mu_n(\xi))_{n\geq 1}$. It is known (see \cite{DH93}) that $\mu_1$ has a unique and non-degenerate minimum at some $\xi_0>0$. We will denote by $u_\xi$ the positive $L^2$-normalized ground state. Then,
\begin{equation}\label{eq.theta0}
\Theta_0=\min_{\xi\in\R} \mu_1(\xi)\,,\quad C_1=\frac{u^2_{\xi_0}(0)}{6}\,.
\end{equation}
In relation with \eqref{eq.hm}, Helffer and Morame also proved that the first eigenfunctions are somehow localized near the boundary points of maximal curvature (see \cite[Theorem 10.6]{HM01} and the numerical simulation of the ground state when $\Omega$ is an ellipse, Figure \ref{fig.ev}). In contrast with \cite{HM01} where only the groundstate energy is considered, in \cite{FH06}, all the low lying eigenvalues are considered in the semiclassical limit when the curvature has a unique and non-degenerate minimum. Fournais and Helffer establish that, for all $n\geq 1$,
\begin{equation}\label{eq.HM}
\lambda_n(h)=\Theta_0 h-C_1\kappa_{\max}h^{\frac 32}+(2n-1)C_1\Theta_0^{\frac 14}\sqrt{\frac{3k_2}{2}}h^{\frac 74}+o(h^{\frac 74})\,,
\end{equation}
with $k_2=-\kappa''(s_0)$ where $\kappa$ is the curvature as a function of the curvilinear coordinate and $s_0$ the point of maximal curvature.
\begin{figure}[ht!]
	\includegraphics[width=10cm]{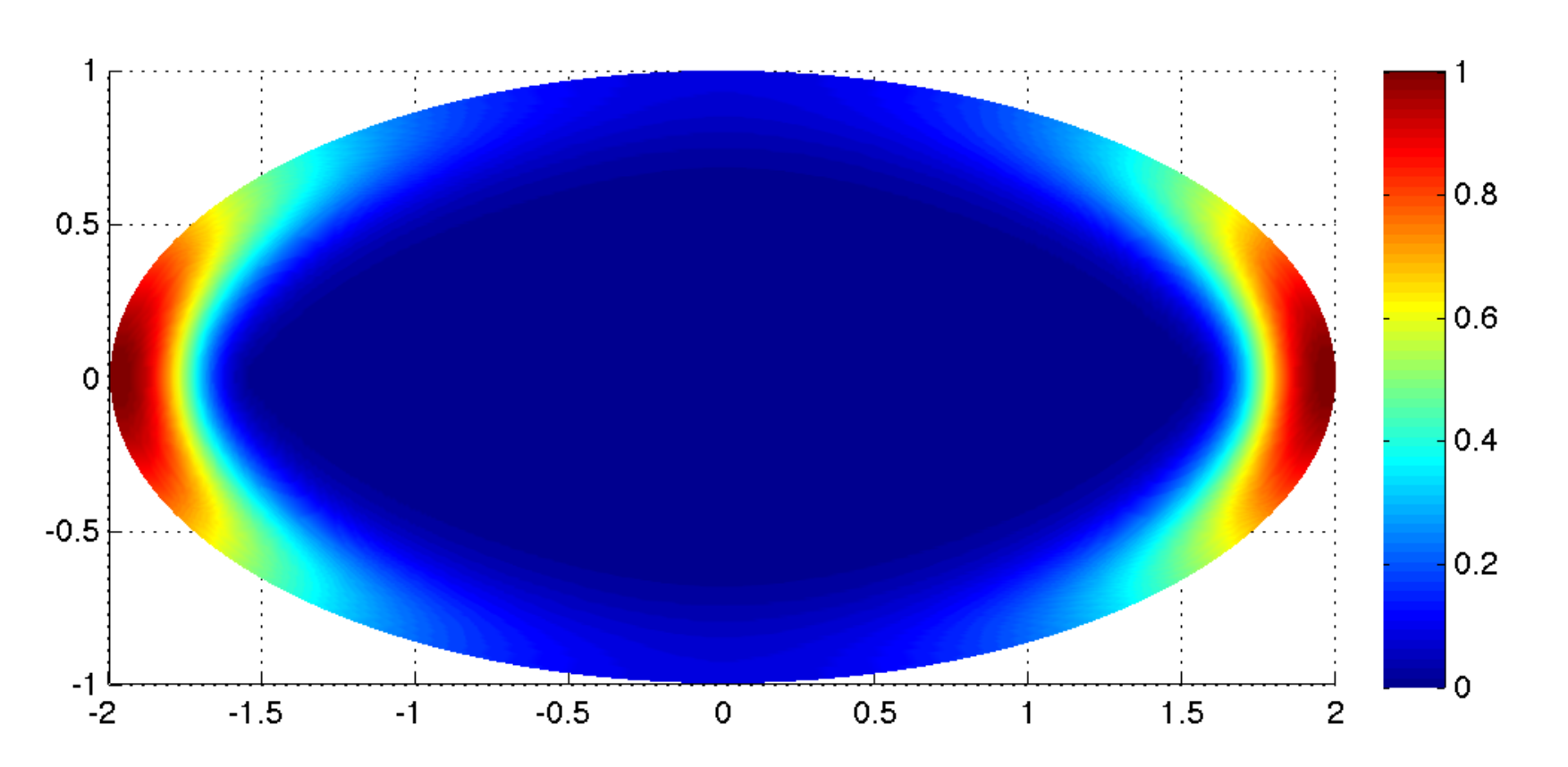}
	\caption{Modulus of the ground state when $\Omega$ is an ellipse. }
 \label{fig.ev}
\end{figure}

\subsubsection{Magnetic WKB constructions}
 In relation with \eqref{eq.HM},  we may wonder  how the corresponding  eigenfunctions behave and if we can accurately describe them in the semiclassical limit. It has been an open question for many years to know if the eigenfunctions could be written in a WKB form. A positive and very explicit answer has been given in \cite{BHR16} (see also Section \ref{sec.WKB} where we recall the result). It turned out that the magnetic operator is deeply connected to an effective electric operator acting on the boundary. Letting
\[\mathfrak{v}(s)=C_1(\kappa_{\max}-\kappa(s))\geq 0\,,\]	
the analysis there revealed the crucial role of the following effective eikonal equation
\begin{equation}\label{eq.eikonale}
\mathfrak{v}(s)-\frac{\mu_1''(\xi_0)}{2}\varphi'(s)^2=0\,.
\end{equation}

\subsubsection{An effective eikonal equation}
The remarkable feature of the aforementionned WKB analysis is that the eikonal equation \eqref{eq.eikonale} is the same, up to a local change of gauge, as the one obtained when considering the following \emph{purely electric Hamiltonian} acting on $L^2(\R/(2L\Z))$,
\begin{equation*}
\mathscr{L}_h^{\rm eff}=\frac{\mu_1''(\xi_0)}{2}\left(h^{\frac 12} D_s^2+V(s)\right)\,,\quad V(s)=\frac{2\mathfrak{v}(s)}{\mu''_1(\xi_0)}\,.
\end{equation*}
Let us denote by $(\lambda^{\mathrm{eff}}_n(h))_{n\geq 1}$ the sequence of its eigenvalues.

If $\mathfrak{v}$ has exactly two symmetric non-degenerate minima at $s_r\in(-L,0)$ and $s_\ell\in(0,L)$, it is well-known that the low lying spectrum is made of exponentially close pairs of eigenvalues. In order to describe the corresponding tunneling formula, we consider
 \begin{equation}\label{defS}
\mathsf{S} =\min \left(\mathsf{S}_{\mathsf{u}},\mathsf{S}_{\mathsf{d}}\right)\,,\quad  \mathsf{S}_{\mathsf{u}}=\int_{[s_{r},s_{\mathsf{\ell}}] } \sqrt{V(s)} \,ds\,,\quad  \mathsf{S}_{\mathsf{d}}=\int_{[s_{\mathsf{\ell}}, s_{\mathsf{r}}] } \sqrt{V(s)} \,ds\,,
\end{equation}
where $[p,q]$ denotes the arc joining $p$ and $q$ in the \enquote{circle} $\R/(2L\Z)$ counter-clockwise. The indices $\mathsf{u}$ and $\mathsf{d}$ refer to the up and down parts of the \enquote{circle} (corresponding to the up and down parts of $\partial\Omega$).

The tunneling formula is
 \begin{equation}\label{formspl1}
\lambda_2^{\rm eff}(h)-\lambda_1^{\rm eff}(h)
=2|w(h)| +\mathscr{O}(h^{\frac{3}{8}}e^{-\mathsf{S}/h^{1/4}})\,,
\end{equation}
where
\begin{equation}\label{eq:we}
w(h)= \mu_1''(\xi_0) h^{\frac{1}{8}} \pi^{-\frac 12} g^{\frac12}  \left(\mathsf{A}_{\mathsf{u}} \sqrt{V(0)}e^{- \mathsf{S}_{\mathsf{u}}/h^{1/4}}+\mathsf{A}_{\mathsf{d}} \sqrt{V(L)}e^{- \mathsf{S}_{\mathsf{d}}/h^{1/4}}\right)\,,
\end{equation}
with
\begin{equation}\label{eq.Aud}
\begin{split}
\mathsf{A}_{\mathsf{u}}&=\exp\left(-\int_{[s_{r}, 0]} \frac{ (V^\frac 12 )' (s)+g}{ \sqrt{V(s)}} ds\right)\,,\\
\mathsf{A}_{\mathsf{d}}&=\exp\left(-\int_{[s_{\mathsf{\ell}}, L]} \frac{ (V^\frac 12 )' (s) -g}{ \sqrt{V(s)}} ds\right)\,,\\
g&=\left(V''(s_{r})/2\right)^\frac 12=\left(V''(s_{\mathsf{\ell}})/2\right)^\frac 12\,.
\end{split}
\end{equation}
Such a one dimensional result goes back to \cite{Harrell}. This formula may also be found in \cite{BHR17} up to a convenient rescaling. The reader might also want to consider the Bourbaki exposé \cite{Robert} based on the celebrated Helffer-Sjöstrand theory developped in \cite{HS84, HS85a, HS85b, HS85c, HS86, HS87a, HS87b} (see also the series of works by Simon \cite{S1, S2, S3, S4}). In a periodic framework, flux effects are considered in \cite{O87} (see also \cite{BHR17}).

\subsubsection{Numerical simulations and conjecture}
More than a decade ago, the first numerical simulations describing magnetic tunneling effects in two dimensions appeared (see for instance \cite{BDMV07} in the case of corner domains). For instance, in the case of the ellipse (see Figure \ref{fig.L2-L1}), it was rather a surprise to be able to estimate an exponentially small effect and also to reveal the \enquote{oscillation} of $\lambda_2(h)-\lambda_1(h)$, numerically.
\begin{figure}[ht!]
	\includegraphics[width=9cm]{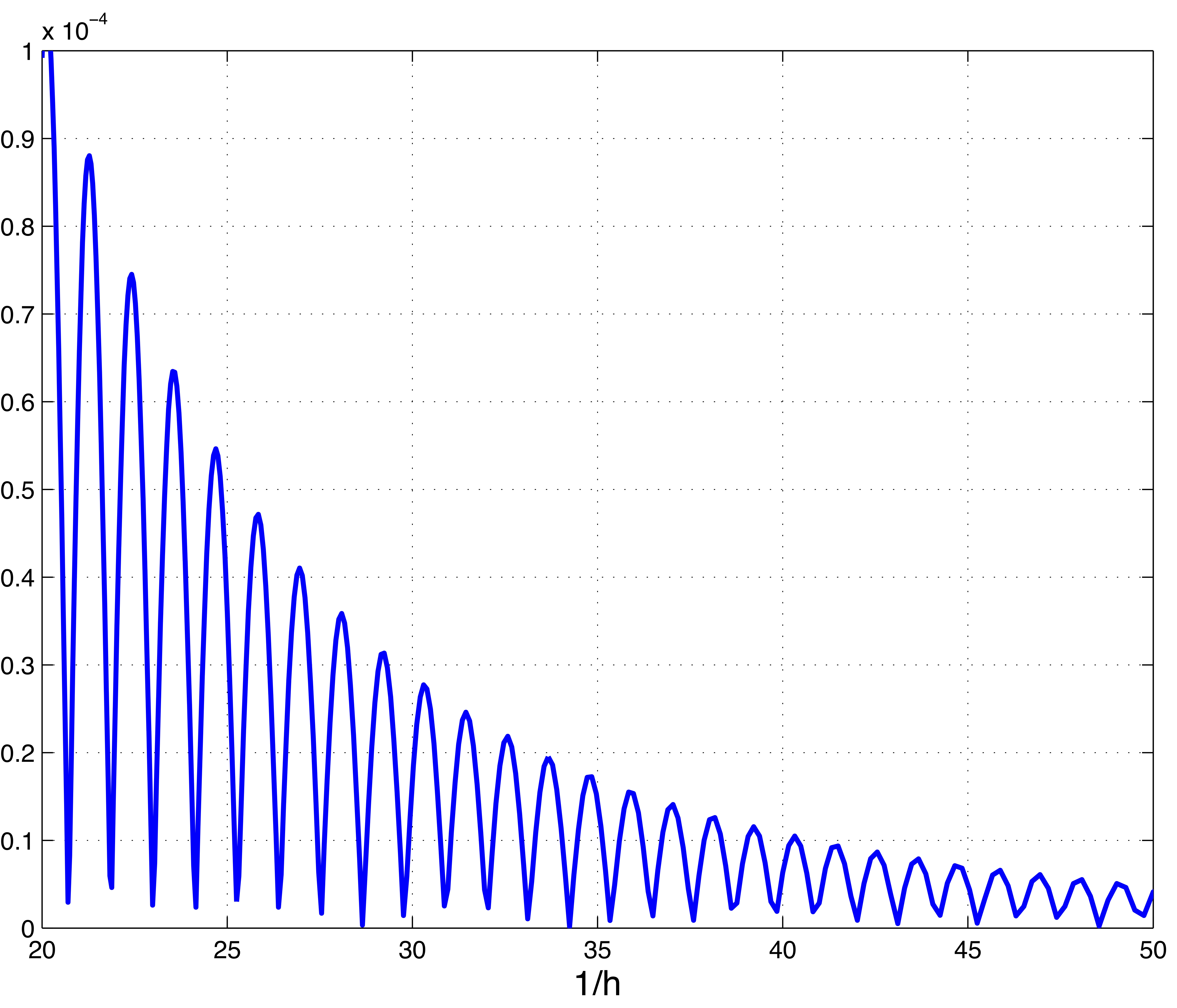}
	\caption{$\lambda_2(h)-\lambda_1(h)$ as a function of $1/h$ in the case of the ellipse}\label{fig.L2-L1}
\end{figure}

With these numerical computations arose the following open question:
\begin{center}
	Is there a theoretical formula to explain Figure \ref{fig.L2-L1}?	
\end{center}

For more numerical simulations concerning smooth domains with symmetries, the reader may consult \cite[Section 5.3.3]{BHR16} where \enquote{camels} (see Figure \ref{fig.cam}) and ellipses are considered. The case of varying (and vanishing) magnetic fields is also investigated.
\begin{figure}[ht!]
	\includegraphics[width=6.5cm]{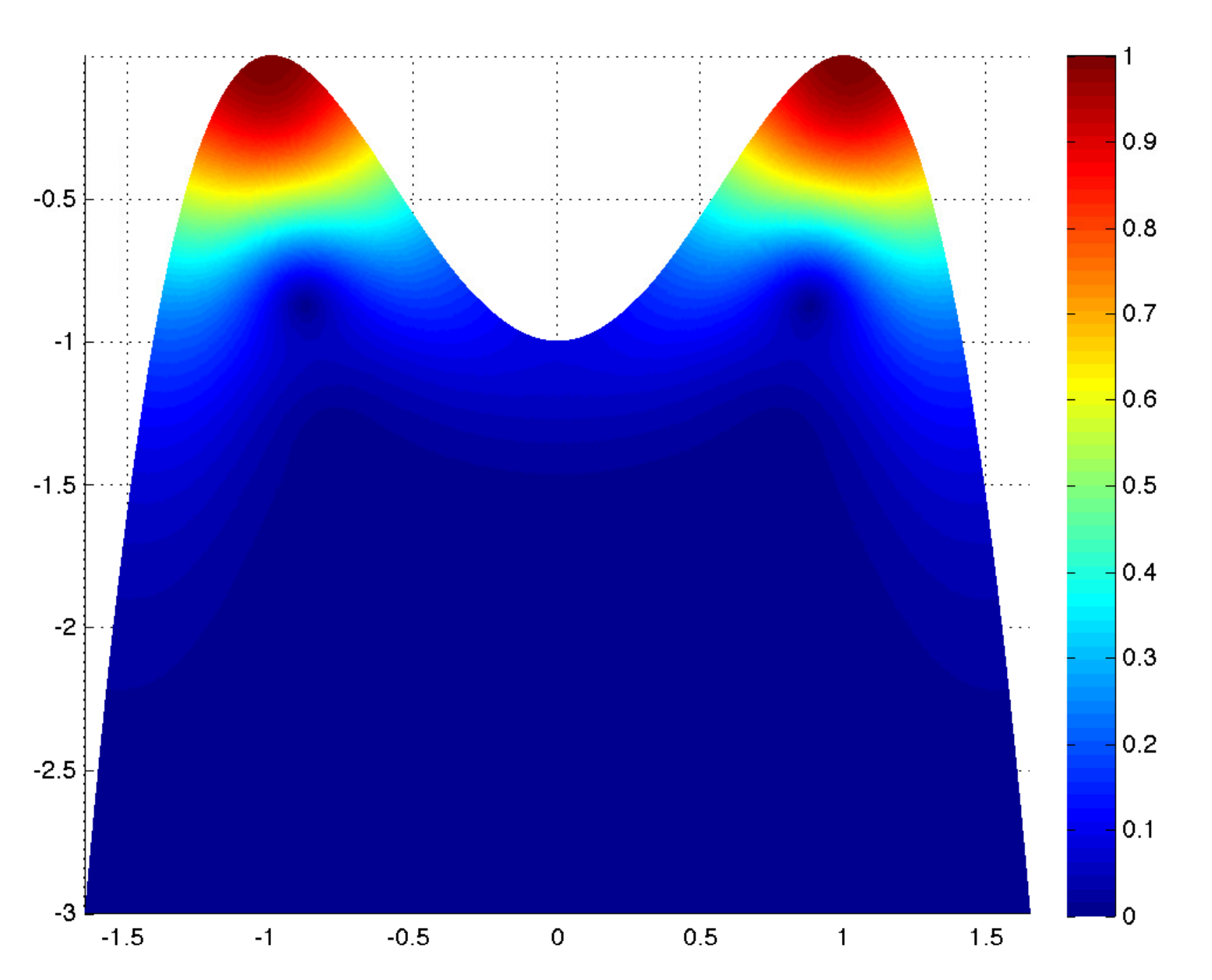}\hspace{1cm}\includegraphics[width=6.5cm]{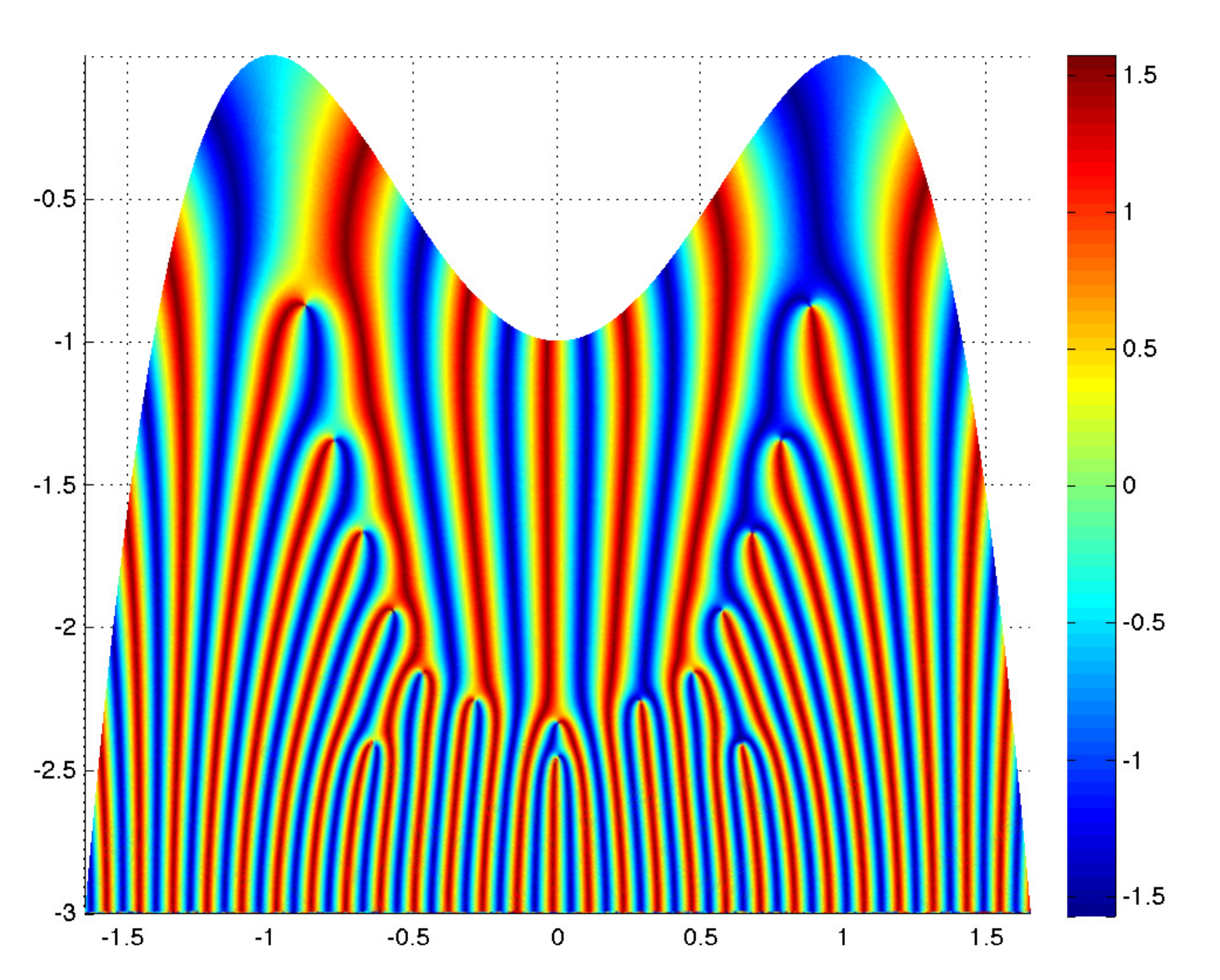}
	\caption{Modulus and phase of the groundstate in a camel-like domain}\label{fig.cam}
\end{figure}

Based on the WKB analysis in pure magnetic fields and the ideas \emph{à la} Born-Oppenheimer developped in \cite{BHR16}, we end up with the conjecture \cite[Conjecture 1.4]{BHR16b} of an \emph{explicit} formula to describe a \emph{purely magnetic tunneling} when $\Omega$ is an ellipse. This conjecture has been numerically checked (see Figure \ref{fig.conj}) and, to the authors' knowledge, is the first of its kind.

\begin{figure}[ht!]
	\includegraphics[width=9cm]{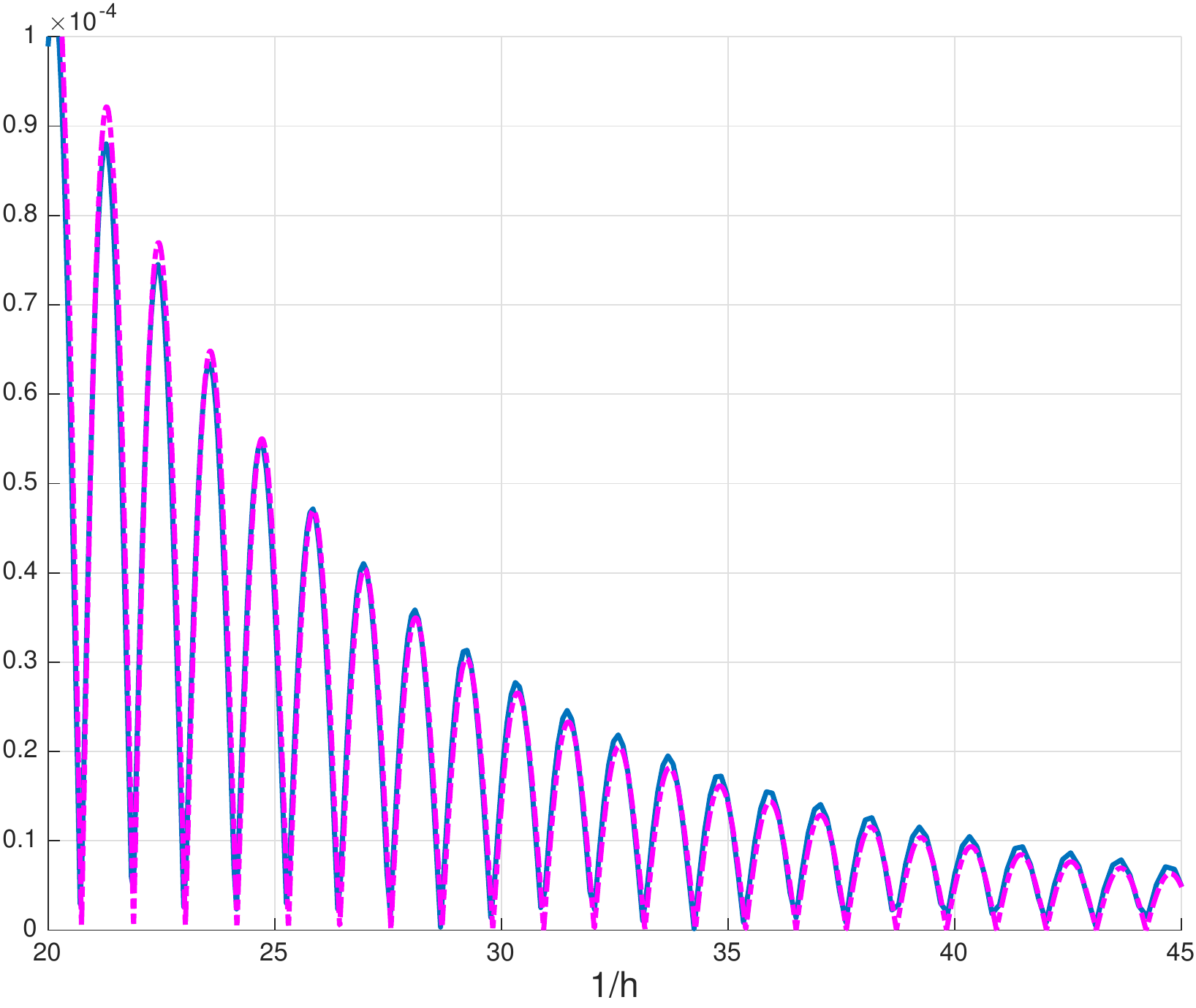}
	\caption{$\lambda_2(h)-\lambda_1(h)$ as a function of $1/h$; numerical simulation (blue) vs our conjecture (dashed)}\label{fig.conj}
\end{figure}
Let us recall this conjecture.

\begin{conjecture}\label{conj.0}
	Assume that $\Omega$ is an ellipse. Then, there exists $\alpha_0\in\R$ such that
	\begin{equation*}
	\begin{split}
	&\lambda_{2}(h)-\lambda_{1}(h)\\
	\underset{\hbar\to 0}{=}&
	h^{\frac{13}{8}}  \mathsf{A}\frac{2^{\frac{5}{2}} C_1^{\frac{3}{4}}}{\sqrt{\pi}} \left(k_2\mu_1''(\xi_0)\right)^{\frac{1}{4}}
	\left(\kappa_{\max}-\kappa_{\min}\right)^{\frac{1}{2}}
	 \times
	\left|\cos\left(L\left(\frac{\gamma_{0}}{h}-\frac{\xi_{0}}{h^{\frac12}}-
	\alpha_0\right)\right) \right|
	\mathrm{e}^{-{\mathsf{S}}/{h^{\frac{1}{4}}}}\\
	&+o(h^{\frac{13}{8}})\mathrm{e}^{-{\mathsf{S}}/{h^{\frac{1}{4}}}}\,,
	\end{split}
	\end{equation*}
	where
	\begin{equation*}
	\begin{split}
		\mathsf{S}&=
		\sqrt{\frac{2C_{1}}{\mu_1''(\xi_{0})}}\int_{-\frac L2}^{\frac L2}\sqrt{\kappa_{\max}-\kappa(s)}\,{\rm d} s\,,\\
		\mathsf{A} &=\exp\left(-\int_{[\frac L2, L]}\frac{\partial_{s}\sqrt{\kappa_{\max}-\kappa(s)}-\sqrt{\frac{k_2}{2}}}{\sqrt{\kappa_{\max}-\kappa(s)}}\,{\rm d} s\right)\,,\\
		\gamma_0&=\frac{|\Omega|}{|\Gamma|}=\frac{|\Omega|}{2L}\,,\quad k_2=-\kappa''\left(\frac L2\right)\,.
		\end{split}
	\end{equation*}
	Here, $s$ denotes the curvilinear coordinate. The points $s=-\frac L2$ and $s=\frac L2$ correspond to the right point of maximal curvature and to the left point of maximal curvature, respectively.
\end{conjecture}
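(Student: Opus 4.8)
The plan is to reduce $\mathscr{L}_h$, near the bottom of its spectrum, to a one-dimensional magnetic operator on the boundary circle $\R/(2L\Z)$, and then to feed that operator into the one-dimensional tunnelling formula \eqref{formspl1}--\eqref{eq.Aud} in its flux-dependent version. In tubular coordinates $(s,t)$ near $\Gamma$ ($s$ the arc length, $t$ the normal distance), after the rescaling $t=h^{1/2}\tau$ suggested by \eqref{eq.hm}--\eqref{eq.HM}, one performs a Grushin/Feshbach--Schur reduction onto the transverse de Gennes ground state $u_\xi$. Since $B\neq0$ genuinely couples $s$ and $t$, the transverse mode depends on the rescaled tangential momentum, so this is an operator-valued pseudodifferential reduction; it is essentially exact, the contributions of the interior being exponentially small, hence negligible compared to the tunnelling scale $e^{-\mathsf{S}/h^{1/4}}$. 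Its output is that, in the window $[\Theta_0 h-Ch^{3/2},\Theta_0 h+Ch^{3/2}]$, the low-lying eigenvalues of $\mathscr{L}_h$ are of the form $\Theta_0 h-C_1\kappa_{\max}h^{3/2}+h^{3/2}\lambda_j(\mathscr{N}_h)$, where $\mathscr{N}_h$ is an $h^{1/4}$-pseudodifferential operator on $L^2(\R/(2L\Z))$ whose principal part is a magnetic (flux-twisted) variant of $\mathscr{L}_h^{\mathrm{eff}}$: after a boundary gauge transformation the tangential momentum of $\mathscr{N}_h$ is centred at $\gamma_0/h-\xi_0/h^{1/2}$ --- $\gamma_0=|\Omega|/(2L)$ being the mean over $\Gamma$ of the tangential component of $\mathbf{A}$, and $\xi_0$ the de Gennes shift --- while the lower-order (subprincipal) part of its symbol supplies the residual constant $\alpha_0$. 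Equivalently, the quasimodes of $\mathscr{L}_h$ carry the ``double phase'' $e^{i(\gamma_0/h-\xi_0/h^{1/2})s}$ already encountered in the magnetic WKB analysis of \cite{BHR16}.

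To carry out this reduction at the required precision one must establish the \emph{optimal} purely magnetic Agmon estimates: the eigenfunctions of $\mathscr{L}_h$ associated with the window above decay like $\exp(-d_{\mathrm{Ag}}(s)/h^{1/4})$ away from the two wells, $d_{\mathrm{Ag}}$ being the Agmon distance attached to $\mathfrak{v}$ (equivalently to $\sqrt{V}$). These follow from the standard multiplier estimate applied to $\Re\langle\mathscr{L}_h e^{\Phi/h^{1/4}}\psi,e^{\Phi/h^{1/4}}\psi\rangle$ with $\Phi$ Lipschitz and $\leq d_{\mathrm{Ag}}$, but now intertwined with the transverse reduction, which is what makes them delicate. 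They are indispensable because the splitting is smaller than any power of $h$: an $\mathscr{O}(h^\infty)$ description of $\mathscr{N}_h$ would not even see it, whereas the Agmon decay localises everything near the two wells and the two connecting arcs tightly enough to pin down the exponentially small interaction between the wells.

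It then remains to compute that interaction, i.e.\ to apply the flux-dependent version of \eqref{formspl1}--\eqref{eq:we} to $\mathscr{N}_h$, the two channels now carrying the magnetic phases $\theta_{\mathsf{u}},\theta_{\mathsf{d}}$ accumulated along the two arcs (cf.\ \cite{O87,BHR17}): one gets $\lambda_2(\mathscr{N}_h)-\lambda_1(\mathscr{N}_h)=2|w(h)|+\mathscr{O}(h^{3/8}e^{-\mathsf{S}/h^{1/4}})$ with $w(h)=\mu_1''(\xi_0)h^{1/8}\pi^{-1/2}g^{1/2}(\mathsf{A}_{\mathsf{u}}\sqrt{V(0)}e^{i\theta_{\mathsf{u}}}e^{-\mathsf{S}_{\mathsf{u}}/h^{1/4}}+\mathsf{A}_{\mathsf{d}}\sqrt{V(L)}e^{i\theta_{\mathsf{d}}}e^{-\mathsf{S}_{\mathsf{d}}/h^{1/4}})$. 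For a general domain with one symmetry axis the reflection $s\mapsto-s$ only forces $g=(V''(s_r)/2)^{1/2}=(V''(s_\ell)/2)^{1/2}$, and the oscillating cosine survives only when the two arcs contribute equal exponentials; for the ellipse this is guaranteed by its \emph{second} symmetry, reflection across the major axis, which swaps the two arcs, so that $\mathsf{S}_{\mathsf{u}}=\mathsf{S}_{\mathsf{d}}=\mathsf{S}$, $\mathsf{A}_{\mathsf{u}}=\mathsf{A}_{\mathsf{d}}=\mathsf{A}$ and $V(0)=V(L)=\tfrac{2C_1}{\mu_1''(\xi_0)}(\kappa_{\max}-\kappa_{\min})$, while the loop enclosed by the two tunnelling channels carries flux $\approx|\Omega|$, whence $\theta_{\mathsf{u}}-\theta_{\mathsf{d}}=2L(\gamma_0/h-\xi_0/h^{1/2}-\alpha_0)$. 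Hence $|w(h)|=2\mu_1''(\xi_0)h^{1/8}\pi^{-1/2}g^{1/2}\sqrt{V(0)}\,\mathsf{A}\,\bigl|\cos\bigl(L(\gamma_0/h-\xi_0/h^{1/2}-\alpha_0)\bigr)\bigr|e^{-\mathsf{S}/h^{1/4}}$, and multiplying $2|w(h)|$ by $h^{3/2}$, inserting $g=(C_1 k_2/\mu_1''(\xi_0))^{1/2}$ and $V(0)=\tfrac{2C_1}{\mu_1''(\xi_0)}(\kappa_{\max}-\kappa_{\min})$, and collecting the elementary numerical factors (two factors $2$, the $\pi^{-1/2}$ of \eqref{eq:we} and a $\sqrt2$) produces exactly the coefficient $\mathsf{A}\,\tfrac{2^{5/2}C_1^{3/4}}{\sqrt{\pi}}(k_2\mu_1''(\xi_0))^{1/4}(\kappa_{\max}-\kappa_{\min})^{1/2}$ in front of $h^{13/8}$; similarly $\mathsf{S}$ and $\mathsf{A}$ of the statement are \eqref{defS}--\eqref{eq.Aud} after substituting $V=\tfrac{2C_1}{\mu_1''(\xi_0)}(\kappa_{\max}-\kappa)$. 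The remaining error $\mathscr{O}(h^{3/2}h^{3/8}e^{-\mathsf{S}/h^{1/4}})=o(h^{13/8})e^{-\mathsf{S}/h^{1/4}}$ is the announced one.

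The hard part is the reduction to the boundary together with the Agmon estimates. One must construct $\mathscr{N}_h$ as a genuine $h^{1/4}$-pseudodifferential operator on the circle whose full symbol is controlled down to the order governing the oscillating phase $L(\gamma_0/h-\xi_0/h^{1/2}-\alpha_0)$ --- in particular its subprincipal part, which is what produces $\alpha_0$ --- and one must run the transverse/tangential decoupling in a way compatible with \emph{optimal} Agmon decay, the magnetic coupling of the two variables being precisely what forbids naively freezing the transverse mode. Above all, since the effect is of size $e^{-\mathsf{S}/h^{1/4}}$ with $h^{1/4}$ --- not $h$ --- in the exponent, every remainder must be made small \emph{relative} to $e^{-\mathsf{S}/h^{1/4}}$: this is far more demanding than the usual $\mathscr{O}(h^\infty)$ bookkeeping, and it is why the first optimal purely magnetic Agmon estimates are unavoidable here.
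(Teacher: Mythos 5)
Your high-level picture is right in many places: the reduction to a tubular neighborhood with exponentially small error, the Grushin reduction with operator-valued symbol, the necessity of optimal purely magnetic Agmon estimates, the role of the flux $\gamma_0$ and of the $\xi_0$ shift in the carried phase, and the arithmetic leading from \eqref{eq:we} to the prefactor $\mathsf{A}\,2^{5/2}C_1^{3/4}\pi^{-1/2}(k_2\mu_1''(\xi_0))^{1/4}(\kappa_{\max}-\kappa_{\min})^{1/2}$ all match the paper. But the core structural step you propose has a genuine gap.

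You suggest constructing an effective $h^{1/4}$-pseudodifferential operator $\mathscr{N}_h$ on $L^2(\R/(2L\Z))$ whose spectrum, in the window $[\Theta_0 h-Ch^{3/2},\Theta_0 h+Ch^{3/2}]$, \emph{is} the low-lying spectrum of $\mathscr{L}_h$ (after an affine normalization), and then to ``apply the flux-dependent version of \eqref{formspl1}--\eqref{eq:we} to $\mathscr{N}_h$.'' For that to make sense you would need the spectral correspondence $\lambda_j(\mathscr{L}_h)\leftrightarrow h^{3/2}\lambda_j(\mathscr{N}_h)$ to hold with errors smaller than the splitting, i.e.\ $o(e^{-\mathsf{S}/h^{1/4}})$. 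But the standard Grushin/Born--Oppenheimer reduction only produces an effective operator up to $\mathscr{O}(\hbar^\infty)$ remainders, and $e^{-\mathsf{S}/h^{1/4}}$ is itself $\mathscr{O}(\hbar^\infty)$, so the remainders would swamp the splitting. You do notice that ``every remainder must be made small relative to $e^{-\mathsf{S}/h^{1/4}}$,'' but the remedy you propose --- controlling the full symbol of an effective operator and then deploying the 1D theorem on it --- would need precisely an exponentially precise reduction that you have not sketched and that the paper does not attempt.

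The paper avoids this obstruction by never diagonalizing through an effective 1D operator. The Grushin parametrix is used only to prove \emph{coercivity and Agmon estimates for the conjugated, \emph{two-dimensional}, one-well operator} $\mathscr{N}^\varphi_{\hbar,r}=e^{\varphi/\hbar^{1/2}}\mathscr{N}_{\hbar,r}e^{-\varphi/\hbar^{1/2}}$ (Theorems~\ref{thm.coercivity} and \ref{thm.coercivity-2D}), for which a parametrix accurate up to $\hbar^2$ and remainders of order $\langle\tau\rangle^6$ suffices because the estimate is only an inequality, not a diagonalization. These estimates yield the optimal tangential Agmon decay (Corollary~\ref{cor.Agmons}, Proposition~\ref{prop.Agmond}) and the accuracy of the WKB quasimodes of the \emph{one-well problems} in weighted $L^2$ (Proposition~\ref{prop:WKB-app}). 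The interaction term $w_{\ell,r}$ is then computed \emph{directly in two dimensions}: Lemma~\ref{lem.ultime} gives an exact boundary-integral formula involving the magnetic derivative $\mathscr{D}_\hbar$ at $\sigma=0$ and $\sigma=-L$, into which the one-well WKB Ansatz is substituted; only at this last stage does the 1D electric interaction of \eqref{eq:we} emerge, via the de Gennes integrals $\int(\xi_0-\tau)u_{\xi_0}^2\,\dd\tau=0$ and $1+2\int(\xi_0-\tau)u_{\xi_0}(\partial_\xi u)_{\xi_0}\,\dd\tau=\mu_1''(\xi_0)/2$. That is where the factor $\mu_1''(\xi_0)$ in your $w(h)$ comes from, and the flux phase $\gamma_0/\hbar^2$ enters because the two one-well ground states $\phi_{\hbar,r}$, $\phi_{\hbar,\ell}$ carry mismatched gauge factors $e^{-i\gamma_0\sigma/\hbar^2}$ and pick up a $2L$-translation when glued to the circle (cf.\ \eqref{eq.phir}--\eqref{eq.phil}). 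You should replace your ``build an effective 1D operator and apply the 1D theorem'' step by this two-step mechanism: Agmon and WKB for the one-well problems, then a direct 2D interaction computation that only at the very end factorizes through the transverse de Gennes mode.
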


The present article proves Conjecture \ref{conj.0} and, consequently, establishes the first explicit formula describing a purely magnetic tunneling effect.

\subsection{Statement of the  general  result}
Let us describe the geometric context of this article.

	\begin{center}
	\begin{figure}[ht!]
		\vspace*{-2cm}
		\includegraphics[width=11cm, angle=-90]{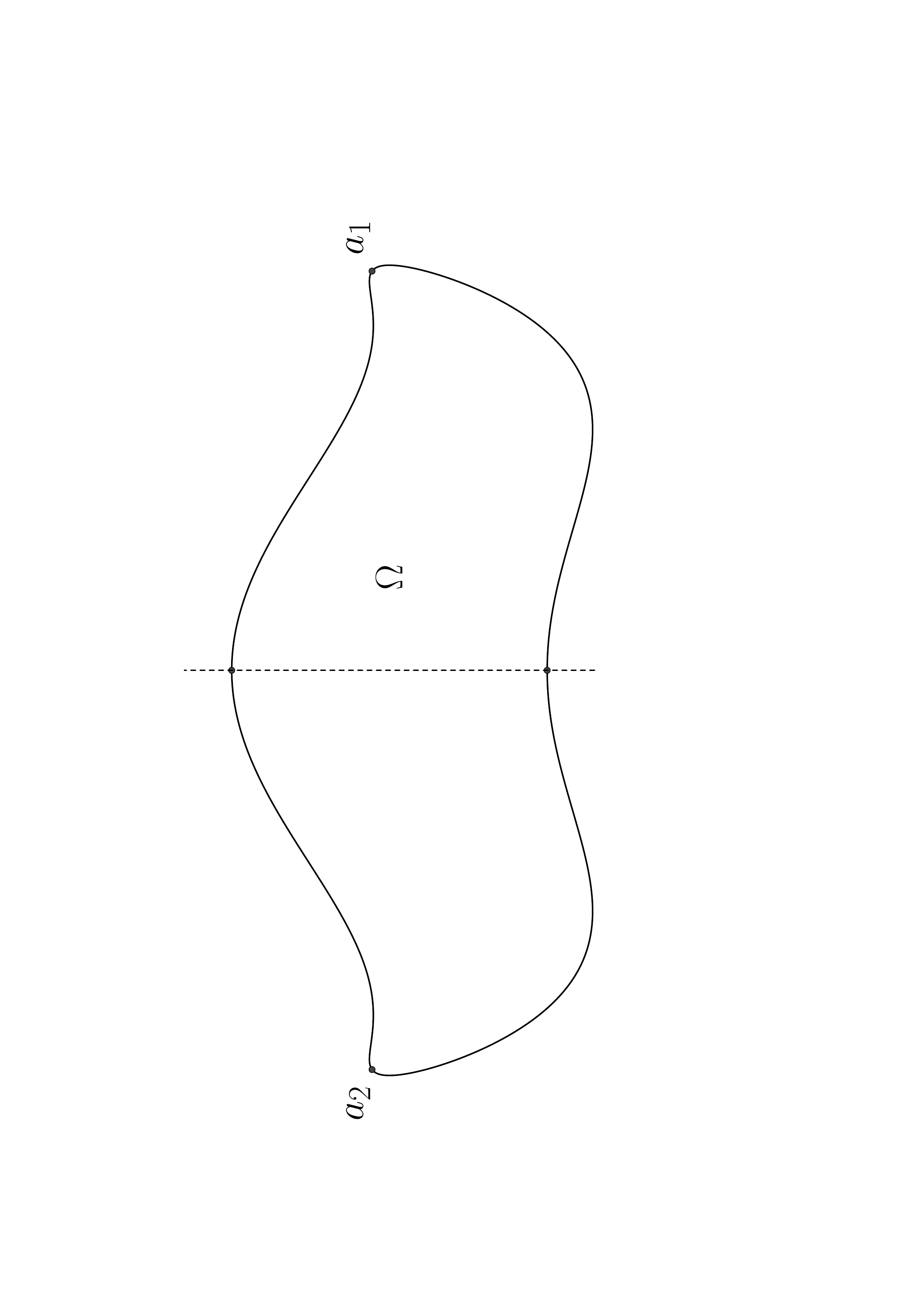}
		\vspace*{-4cm}
		\caption{A domain $\Omega$ with two symmetric curvature wells}\label{fig.tw}
	\end{figure}
\end{center}

\begin{assumption}\label{hyp.main}
$\Omega$ is a smooth, open, bounded, connected, and simply-connected set of the plane. Moreover, it is assumed that $\Omega$ is symmetric and that the curvature has two non-degenerate maxima:

\begin{enumerate}[\rm i)]
	\item $\Omega$ is symmetric with respect to the $y$-axis.
	\item The curvature $\kappa$ on the  boundary $\Gamma$ attains its maximum at exactly two points $a_1$ and $a_2$ which are not on the symmetry axis and belong to the same connected component of the boundary. We write
	\[a_1=(a_{1,1},a_{1,2})\in\Gamma\quad\mbox{ and }\quad a_2=(a_{2,1}, a_{2,2})\in\Gamma\,,\]
	such that $a_{1,1}>0$ and $a_{2,1}<0\,$.
	\item  The second derivative of the curvature (w.r.t. arc-length) at $a_1$ and $a_2$ is negative.
\end{enumerate}
\end{assumption}

We can now state the main theorem of this article, which gives, to the authors' knowledge, the first
optimal purely magnetic tunneling estimate.

\begin{theorem}\label{thm.tunnel}
Under Assumption \ref{hyp.main}, we have the tunneling formula
\[\lambda_2(h)-\lambda_1(h)=2|\tilde w(h)|+o(h^{\frac{13}{8}}e^{-\mathsf{S}/h^{\frac 14}})\,,\]
where
\begin{equation*}
\tilde w(h)= \mu_1''(\xi_0) h^{\frac{13}{8}} \pi^{-\frac 12} g^{\frac12}
\left(\mathsf{A}_{\mathsf{u}} \sqrt{V(0)}e^{- \mathsf{S}_{\mathsf{u}}/h^{1/4}} e^{iL f(h)}+\mathsf{A}_{\mathsf{d}} \sqrt{V(L)}e^{- \mathsf{S}_{\mathsf{d}}/h^{1/4}} e^{-iLf(h)}\right)\,,
\end{equation*}
where $V(s)=\frac{2C_1(\kappa_{\max}-\kappa(s))}{\mu''_1(\xi_0)}$ and
\begin{enumerate}[\rm i.]
\item $f(h)=\gamma_0/h-\xi_0/h^{1/2}-\alpha_0$,
\item $\alpha_0$ is a constant involving the de Gennes operator and the geometry (see \eqref{eq.alpha0}),
\item $\mathsf{A}_{\mathsf{u}}$, $\mathsf{A}_{\mathsf{d}}$ and $g$ are defined in \eqref{eq.Aud}.
\end{enumerate}
\end{theorem}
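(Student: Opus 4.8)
The plan is to reduce the two-dimensional magnetic problem to an effective one-dimensional pseudodifferential operator on the boundary circle $\R/(2L\Z)$, and then to apply a one-dimensional tunneling analysis of Harrell--Helffer--Sj\"ostrand type to that effective operator. Concretely, I would first introduce tubular (boundary-normal) coordinates $(s,t)$ near $\Gamma$, where $s$ is arc-length and $t$ the distance to the boundary, and conjugate $\mathscr{L}_h$ to an operator on a neighborhood of $\{t=0\}$. The first key step is a \emph{purely magnetic Agmon estimate}: one must show that the eigenfunctions associated with $\lambda_1(h)$ and $\lambda_2(h)$ are localized, with the sharp exponential rate, near the two curvature maxima $a_1,a_2$, and concentrated at scale $h^{1/2}$ in the normal variable $t$. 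This localization is what licenses replacing the true operator by a model operator built from the de Gennes operator $\mathfrak{L}_\xi$ in the $t$-variable, fibered over $s$. After this localization, a Grushin/Born--Oppenheimer reduction projects onto the ground state $u_\xi$ of $\mathfrak{L}_\xi$ and yields, to the required order, the effective operator $\mathscr{L}_h^{\mathrm{eff}}$ together with a genuinely \emph{pseudodifferential} correction whose symbol carries the flux/geometric phase: this is the origin of the oscillating factor $e^{\pm iLf(h)}$, with $f(h)=\gamma_0/h-\xi_0/h^{1/2}-\alpha_0$ and $\gamma_0=|\Omega|/(2L)$ coming from integrating the vector potential once around $\Gamma$.

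Once the effective operator is in hand, I would run the standard one-dimensional double-well machinery on $\R/(2L\Z)$ with potential $V(s)=\frac{2C_1(\kappa_{\max}-\kappa(s))}{\mu_1''(\xi_0)}$, in the semiclassical parameter $h^{1/4}$ (after the rescaling that turns $h^{1/2}D_s^2+V$ into an honest $-\hbar^2\Delta+V$ with $\hbar=h^{1/4}$). One constructs quasimodes localized in each well $s_r,s_\ell$ out of the one-well WKB Ansatz governed by the eikonal equation $\mathfrak{v}(s)-\tfrac{\mu_1''(\xi_0)}{2}\varphi'(s)^2=0$ and the associated transport equation (which produces the amplitudes $\mathsf{A}_{\mathsf{u}},\mathsf{A}_{\mathsf{d}}$ and the prefactor $g=(V''(s_r)/2)^{1/2}$); the interaction coefficient $w(h)$ is then computed by an interaction-matrix (Helffer--Sj\"ostrand) formula, essentially a boundary term obtained by integrating a Wronskian across the two mountain passes — the ``up'' pass near $s=0$ and the ``down'' pass near $s=L$. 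Because there are two tunneling channels around the circle, each contributes, and the phases $e^{\pm iLf(h)}$ attached to the two channels by the pseudodifferential correction add up to give exactly $\tilde w(h)$; symmetry with respect to the $y$-axis guarantees the two wells are mirror images, so the diagonal Grushin entries coincide and $\lambda_2-\lambda_1=2|\tilde w(h)|$ up to the stated remainder. The explicit shape of $\alpha_0$ (formula \eqref{eq.alpha0}) emerges from tracking the subprincipal symbol of the reduction, i.e. the $O(h^{1/2})$ and $O(1)$ phase corrections picked up in the de Gennes fiber and in the change of gauge.

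The main obstacle, and the genuinely new part, is the sharp \emph{purely magnetic} Agmon estimate and the accompanying control of the pseudodifferential reduction down to the exponentially small scale: unlike the electric case, the confinement here is not produced by a potential well but by the subtle $h^{3/2}$ curvature term in \eqref{eq.hm}, so one must propagate an $h^{1/4}$-scale Agmon weight $e^{\Phi(s)/h^{1/4}}$ through the microlocal normal form while keeping all error terms of size $o(h^{13/8}e^{-\mathsf{S}/h^{1/4}})$, uniformly. In particular one needs the remainder in the operator reduction to be not merely $O(h^{N})$ but small \emph{relative to} the tunneling amplitude, which forces a rather precise symbolic calculus (Weyl quantization $\Op$, as the notation in the preamble suggests) and careful bookkeeping of the normal-variable ground-state projection. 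A secondary difficulty is justifying that the true eigenvalues $\lambda_1(h),\lambda_2(h)$ are approximated by $\lambda_1^{\mathrm{eff}},\lambda_2^{\mathrm{eff}}$ with an error smaller than the splitting itself — i.e. that the spectral gap to the rest of the spectrum ($\sim h^{7/4}$ from \eqref{eq.HM}) dominates all reduction errors — which again is where the optimal Agmon estimates do the heavy lifting. Once these are secured, the passage from $\mathscr{L}_h$ to $\mathscr{L}_h^{\mathrm{eff}}$ with the phase correction, the one-dimensional interaction formula \eqref{formspl1}--\eqref{eq.Aud}, and the identification of constants with those in Conjecture~\ref{conj.0} are, respectively, a careful-but-standard reduction, a citation to \cite{Harrell, BHR17, Robert}, and a bookkeeping computation.
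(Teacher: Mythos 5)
Your overall architecture (tubular coordinates, normal rescaling, a Grushin/Born--Oppenheimer reduction built on the de~Gennes operator, and purely magnetic Agmon estimates as the central technical obstacle) is the same as the paper's, and you have correctly located where the novelty lies. But there is a genuine gap in the step you call ``secondary'', and the paper's route is structured precisely to avoid it.

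You propose to first reduce $\mathscr{L}_h$ to an effective $1$D operator $\mathscr{L}_h^{\mathrm{eff}}$ on $\R/(2L\Z)$ (plus a pseudodifferential phase correction), then run the Harrell--Helffer--Sj\"ostrand double-well machinery for that $1$D operator and finally argue that $\lambda_j(h)$ and $\lambda_j^{\mathrm{eff}}(h)$ agree to within the exponential scale. For the last step you invoke the spectral gap of order $h^{7/4}$ from \eqref{eq.HM}. This is not sufficient: the gap $h^{7/4}$ tells you that $\lambda_1,\lambda_2$ are well separated from $\lambda_3$, but the quantity you need to control is the \emph{difference} $(\lambda_2-\lambda_1)-(\lambda_2^{\mathrm{eff}}-\lambda_1^{\mathrm{eff}})$, which must be shown to be $o\bigl(h^{13/8}e^{-\mathsf{S}/h^{1/4}}\bigr)$. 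No power-of-$h$ reduction error is small against that, and none of the dimensional-reduction techniques you cite produce an \emph{exponentially accurate} eigenvalue-level passage from $\mathscr{L}_h$ to $\mathscr{L}_h^{\mathrm{eff}}$. In other words, the splitting of the effective $1$D Hamiltonian is not a priori the splitting of the original $2$D magnetic operator to the relevant accuracy, and claiming it is constitutes the hole in the plan.

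The paper circumvents this by never performing a $1$D eigenvalue reduction at all. The Grushin parametrix (Theorem~\ref{thm.Grushin}) and the resulting effective symbol $q_z^\pm$ are used \emph{only} to prove tangential coercivity (Theorems \ref{thm.coercivity}--\ref{thm.coercivity-2D}) and hence the optimal Agmon weights (Corollary~\ref{cor.Agmons}, Proposition~\ref{prop.Agmond}) for the genuine rescaled $2$D operator $\mathscr{N}_\hbar$. The WKB quasimodes of Theorem~\ref{thm.BKW} are honest $2$D objects $\Psi_{\hbar,r}(\sigma,\tau)$, and the interaction matrix is built directly for $\mathscr{N}_\hbar$ in Section~\ref{sec.redinteract}: one truncates the one-well eigenfunctions $\phi_{\hbar,\alpha}$, projects onto $E=\mathrm{Ker}(\mathscr{N}_\hbar-\nu_1)\oplus\mathrm{Ker}(\mathscr{N}_\hbar-\nu_2)$, orthonormalizes, and reads off $\nu_2-\nu_1$ from the $2\times 2$ matrix, so that all reduction errors appear as $\tilde{\mathscr{O}}(e^{-2\mathsf{S}/\sqrt\hbar})$ relative to the splitting $\sim e^{-\mathsf{S}/\sqrt\hbar}$. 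The interaction term $w_{\ell,r}$ is then (Lemma~\ref{lem.ultime}) a \emph{boundary integral in $\tau$ at $\sigma=0$ and $\sigma=-L$} involving the magnetic derivative $\mathscr{D}_\hbar$, and the $1$D electric interaction formula is only \emph{recognized a posteriori} after substituting the $2$D WKB expansion and using the de~Gennes moment identities ($\int(\xi_0-\tau)u_{\xi_0}^2\,\dd\tau=0$ and the Feynman--Hellmann identity for $\mu_1''(\xi_0)$). Finally, the flux phase $e^{\pm iLf(h)}$ is not a pseudodifferential correction to $\mathscr{L}_h^{\mathrm{eff}}$ as you describe: it comes from the explicit gauge factor $e^{-i\gamma_0\sigma/\hbar^2}$ in \eqref{eq.phir0}--\eqref{eq.phil0}, together with the $\pm 2L$-translations in \eqref{eq.phir}--\eqref{eq.phil} needed because the flux-bearing potential on $\Gamma$ admits no periodic gauge. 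To salvage your plan you would either have to make the missing exponential comparison precise (which is essentially re-deriving the paper's interaction-matrix argument in disguise) or abandon the detour through $\mathscr{L}_h^{\mathrm{eff}}$ and compute the interaction in $2$D as the paper does.
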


\begin{remark}
Let us make some remarks about Theorem \ref{thm.tunnel}. The proof actually allows to consider slightly  more general situations.
\begin{enumerate}[\rm i)]
\item Theorem \ref{thm.tunnel} implies Conjecture \ref{conj.0}. In the case of the ellipse, we have $s_{r}=-s_{\ell}=-\frac{L}{2}$ and $\kappa(0)=\kappa(L)=\kappa_{\min}$. Moreover, due the additionnal symmetry with respect to the horizontal axis, we have $\mathsf{A}_{\mathsf{u}}=\mathsf{A}_{\mathsf{d}}$ (see \eqref{eq.Aud}) and $\mathsf{S}_{\mathsf{u}}=\mathsf{S}_{\mathsf{d}}$ (see \eqref{defS}). This additionnal symmetry is thus responsible for the presence of the cosine in Conjecture \ref{conj.0}.
\item The assumption that $\Omega$ is bounded is not necessary to establish a tunneling result. Our strategy also applies to deal with camel-like domains (see Figure \ref{fig.cam}). In this simpler case, the \enquote{down} part in the tunneling formula has to be removed. Then, there is only one interaction term and no global flux effects. In particular, no oscillation of $\lambda_2(h)-\lambda_1(h)$ occurs.
\item The assumption that $\Omega$ is simply-connected is not necessary. The possible holes only contribute to change the value of $\gamma_0$.
\item The fact that we consider the first two eigenvalues, or only a domain with only one symmetry, is just for the simplicity of the presentation. The same strategy provides us with tunneling estimates in multiple well situations since our method reduces the analysis to one dimension electric tunneling (up to phase shifts).
\item In \cite{S4}, Simon described the \enquote{flea on the elephant effect}. This effect occurs when the electric potential is slightly perturbed and/or when the symmetry is broken. In this case, the first two eigenfunctions end up living in separate wells. In our case, such a phenomenon could be described as well (if we perturb the geometry of the boundary). In the special case of the ellipse, the oscillating effect is due to the existence of two minimal geodesics connecting the two curvature wells: If we slightly perturb the boundary (by keeping the symmetry) in such a way that $\mathsf{S}_{\mathsf{u}}\neq  \mathsf{S}_{\mathsf{d}}$, this kills one of the minimal geodesics and the beautiful oscillating effect disappears.
\end{enumerate}
\end{remark}

\begin{remark}
The investigation will reveal the \emph{microlocal} nature of the tunelling estimate given in Theorem \ref{thm.tunnel}. It contrasts with the electric tunneling \emph{à la} Helffer-Sjöstrand, and even with recent contributions about purely geometric tunneling \cite{HKR17} and \cite{KR17} where microlocal analysis is absent.
\end{remark}

\subsection{Organization and strategy}

In Section \ref{sec.2}, we explain how the spectral analysis of $\mathscr{L}_h$ can be reduced to the one of an operator $\mathscr{L}_{h,\delta}$ on a tubular neighborhood of the boundary, see Proposition \ref{prop.redtub}. Then, $\mathscr{L}_{h,\delta}$ is written in the classical tubular coordinates $(s,t)\in \R/(2L\Z)\times (0,\delta)$ and rescaled in the transverse variable $t=\hbar\tau$, with $\hbar=h^{\frac 12}$. The spectral analysis is then reduced to the one of $\mathscr{N}_\hbar$, see Proposition \ref{prop.redNh}.

In Section \ref{sec.3}, we consider a \enquote{one well problem} by removing the left maximum and gluing an infinite strip. Then, the resulting operator $\mathscr{N}_{\hbar, r}$ can be interpreted as a pseudo-differential operator with operator valued symbol the principal symbol of which being the de Gennes operator. Such operators and their spectrum have been extensively studied by Martinez via Grushin reductions. A concise presentation can be found in \cite{M07}. More details and extensions may also be found in the Ph. D. thesis of Keraval \cite{Keraval}. To some extent, our presentation will be similar to \cite{Martinez89} where tunneling estimates are provided in the case of partially semiclassical electric operators. In order to construct a parametrix of $\mathscr{N}_{\hbar, r}$\footnote{and actually of the conjugated operator  $\mathscr{N}^{\varphi}_{\hbar, r}= e^{\varphi/\hbar^{\frac 12}}\mathscr{N}_{\hbar, r}e^{-\varphi/\hbar^{\frac 12}}$, where $\varphi$ is an appropriate subsolution of the effective eikonal equation.}, one will need a convenient symbol class, see Notation \ref{not.S}. For that purpose, we will use a microlocal cutoff function and construct a parametrix for the \enquote{microlocalized} operator $\Op p_\hbar$ (near $\xi_0$), see Theorem \ref{thm.Grushin}.

In Section \ref{sec.4}, we use the parametrix to show that tangential elliptic estimates for $\mathscr{N}^{\varphi}_{\hbar, r}$ may be deduced from the one of an effective pseudo-differential operator acting on the boundary, see Theorem \ref{thm.coercivity}.

In Section \ref{sec.5}, we establish Theorem \ref{thm.coercivity-2D}. It is devoted to remove the frequency cutoff function introduced in Section \ref{sec.cutoff} up to using the transverse Agmon estimates, and the behavior at infinity of the de Gennes function $\mu_1$.

In Section \ref{sec.6}, we explain how to deduce optimal tangential Agmon estimates from Theorem \ref{thm.coercivity-2D} (see Corollary \ref{cor.Agmons}). We also establish slightly rougher tangential estimates for the \enquote{double well operator} $\mathscr{N}_\hbar$ from the one well estimates, see Proposition \ref{prop.Agmond}.

Section \ref{sec.7} is devoted to the proof of Theorem \ref{thm.tunnel}. We construct an approximate basis from the WKB Ansätze attached to each curvature well and compute the spectrum of the \emph{interaction} matrix thanks to the accurate WKB approximation of the ground state in each simple well.

\section{A reduction to a tubular neighborhood of the boundary}\label{sec.2}

\subsection{Normal Agmon estimates and spectral consequence}
The following proposition is well-known (see \cite[Theorem 4.1]{FH06}). It comes from the fact that the magnetic Laplacian on $\Omega$ with \emph{Dirichlet} boundary condition is bounded from below by $h$ since
\[\forall\psi\in\mathscr{C}^\infty_0(\Omega)\,,\quad \int_\Omega|(-ih\nabla-\mathbf{A})\psi|^2\dd x\geq h\int_\Omega|\psi|^2\dd x\,.\]
\begin{proposition}\label{prop.Agmont}
Let $M>0$. There exist $C, h_0, \alpha>0$ such that, for all $h\in(0,h_0)$, and all eigenpairs $(\lambda,\psi)$ of $\mathscr{L}_h$ with $\lambda\leq\Theta_0 h+Mh^{\frac 32}$,
\[\int_\Omega e^{2\alpha \mathrm{dist}(x,\Gamma)/h^{\frac 12}}|\psi|^2\dd x\leq C\|\psi\|^2\,,\]
and
\[\int_\Omega e^{2\alpha \mathrm{dist}(x,\Gamma)/h^{\frac 12}}|(-ih\nabla-\mathbf{A})\psi|^2\dd x\leq Ch\|\psi\|^2\,.\]	
\end{proposition}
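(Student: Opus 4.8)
The plan is the standard magnetic Agmon (energy--weight) argument. Write $d(x)=\mathrm{dist}(x,\Gamma)$ and, for a small $\alpha>0$ to be fixed at the end, set $\Phi=\alpha d/h^{1/2}$; since $\Omega$ is bounded this is a bounded Lipschitz function with $|\nabla\Phi|=\alpha h^{-1/2}$ a.e. First I would establish the \emph{magnetic Agmon identity}: testing the eigenvalue equation $\mathscr{L}_h\psi=\lambda\psi$ against $e^{2\Phi}\psi$ (no boundary term arises, since the Neumann condition is exactly the natural condition built into $\Dom(\mathscr{L}_h)$) and expanding $(-ih\nabla-\mathbf{A})(e^{\Phi}\psi)=e^{\Phi}\big((-ih\nabla-\mathbf{A})\psi-ih(\nabla\Phi)\psi\big)$, one gets
\[\mathscr{Q}_h(e^{\Phi}\psi)=\lambda\|e^{\Phi}\psi\|^2+h^2\|(\nabla\Phi)e^{\Phi}\psi\|^2=\big(\lambda+\alpha^2h\big)\|e^{\Phi}\psi\|^2\,.\]
Thus everything reduces to bounding $\mathscr{Q}_h(e^{\Phi}\psi)$ from below by strictly more than $(\Theta_0+\alpha^2)h\|e^{\Phi}\psi\|^2$ away from $\Gamma$.

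The second step is a two-scale lower bound for $\mathscr{Q}_h$. On the one hand, by the diamagnetic/commutator inequality (using $B\equiv1$), $\mathscr{Q}_h(w)\ge h\|w\|^2$ for $w$ supported at positive distance from $\Gamma$. On the other hand, writing $\mathscr{L}_h$ in tubular coordinates $(s,t)$ and rescaling $t=h^{1/2}\tau$, the operator equals $h$ times the de Gennes operator $\mathfrak{L}_\xi=D_\tau^2+(\xi-\tau)^2$ up to $\mathscr{O}(h^{1/2})$ curvature corrections; since the Dirichlet realization of $\mathfrak{L}_\xi$ on $[R,\infty)$ satisfies $\nu(R):=\inf_{\xi\in\R}\inf\mathrm{spec}\,\mathfrak{L}_\xi^{[R,\infty)}>\Theta_0$ for every $R>0$, with $\nu$ non-decreasing, one obtains $\delta_0>0$ such that $\mathscr{Q}_h(w)\ge(\nu(R)-Ch^{1/2})h\|w\|^2$ for all $R\ge0$ and all $w$ supported in $\{Rh^{1/2}\le d\le\delta_0\}$; this is precisely the type of bound established in \cite{HM01,FH06}. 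Gluing these bounds by a fixed-scale partition of unity and fixing some $R_0>0$, one gets $\eta_0>0$ such that, for $h$ small,
\[\mathscr{Q}_h(w)\ge(\Theta_0+\eta_0)h\|w\|^2\qquad\text{for every }w\text{ supported in }\{d\ge R_0h^{1/2}\}\,.\]

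In the third step, split $w:=e^{\Phi}\psi$ with a partition $\chi_{1,h}^2+\chi_{2,h}^2=1$ at scale $h^{1/2}$, with $\mathrm{supp}\,\chi_{1,h}\subset\{d\le2R_0h^{1/2}\}$, $\mathrm{supp}\,\chi_{2,h}\subset\{d\ge R_0h^{1/2}\}$, and $|\nabla\chi_{i,h}|\le CR_0^{-1}h^{-1/2}$. The IMS localization formula for the form, together with $\mathscr{Q}_h(\chi_{1,h}w)\ge0$ and the lower bound above, gives $\mathscr{Q}_h(w)\ge(\Theta_0+\eta_0)h\|\chi_{2,h}w\|^2-CR_0^{-2}h\|w\|^2$. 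Combining with the Agmon identity, using $\lambda\le\Theta_0h+Mh^{3/2}$ and $\|w\|^2=\|\chi_{1,h}w\|^2+\|\chi_{2,h}w\|^2$, one arrives at
\[\big(\eta_0-Mh^{1/2}-\alpha^2-CR_0^{-2}\big)\|\chi_{2,h}w\|^2\le(\Theta_0+\eta_0)\|\chi_{1,h}w\|^2\,.\]
Now choose $R_0$ large, then $\alpha$ small, then $h$ small so the left-hand coefficient is $\ge\eta_0/2>0$; since $\Phi\le2\alpha R_0$ on $\mathrm{supp}\,\chi_{1,h}$ one has $\|\chi_{1,h}w\|^2\le e^{4\alpha R_0}\|\psi\|^2$, hence $\|\chi_{2,h}w\|^2\le C\|\psi\|^2$ and therefore $\int_\Omega e^{2\alpha d/h^{1/2}}|\psi|^2\,\dd x=\|w\|^2\le C\|\psi\|^2$, which is the first estimate. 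For the second, feeding this back into the Agmon identity yields $\mathscr{Q}_h(e^{\Phi}\psi)\le Ch\|\psi\|^2$, and then $e^{\Phi}(-ih\nabla-\mathbf{A})\psi=(-ih\nabla-\mathbf{A})(e^{\Phi}\psi)+ih(\nabla\Phi)e^{\Phi}\psi$, together with $\|a+b\|^2\le2\|a\|^2+2\|b\|^2$ and $h^2|\nabla\Phi|^2=\alpha^2h$, gives the claim.

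The only substantive input is the two-scale lower bound of the second step --- controlling the curvature terms in the tubular coordinates precisely enough that the de Gennes bottom $\Theta_0$, and crucially its \emph{strict} improvement $\nu(R_0)>\Theta_0$ at distance $R_0h^{1/2}$, governs $\mathscr{Q}_h$. Everything else is bookkeeping (IMS localization, choice of constants), and, as the statement indicates, this proposition is classical.
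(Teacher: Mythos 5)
Your argument is the standard magnetic Agmon estimate and is correct; it matches the one-line hint the paper gives (the Dirichlet lower bound $\mathscr{Q}_h(w)\geq h\|w\|^2$ for $w\in\mathscr{C}^\infty_0(\Omega)$) and the cited reference, and the Agmon identity, IMS localization and choice of constants are all handled properly.

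One simplification worth noting: your second step is more elaborate than necessary, and in fact the de~Gennes part of it plays no role in the final argument. The commutator inequality
\[
h\|w\|^2=\big|\big\langle[\,(-ih\partial_1-A_1),(-ih\partial_2-A_2)\,]w,w\big\rangle\big|\leq \mathscr{Q}_h(w)
\]
holds for \emph{every} $w$ with compact support in $\Omega$, with no restriction on how close that support is to $\Gamma$; in particular it holds for any $w$ supported in $\{d\geq R_0h^{1/2}\}$. Since $\Theta_0<1$, this already gives $\mathscr{Q}_h(w)\geq(\Theta_0+\eta_0)h\|w\|^2$ with $\eta_0=1-\Theta_0$ directly, and there is no need for the intermediate-scale bound $\nu(R)>\Theta_0$ from the half-line de~Gennes operator on $\{Rh^{1/2}\leq d\leq\delta_0\}$, nor for gluing two regimes by a fixed-scale partition. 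That finer analysis is what is needed for two-term spectral asymptotics of $\lambda_1(h)$, but the rough exponential decay at scale $h^{1/2}$ asked for here only uses the Dirichlet bound, exactly as the paper indicates. With that trimming, your proof is precisely the expected one.
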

This proposition tells us that the first eigenfunctions of $\mathscr{L}_h$ are exponentially localized in a neighbrohood of size $h^{\frac 12}$ of $\Gamma$. This invites us to define the new operator $\mathscr{L}_{h,\delta}$. Consider the (possibly $h$-dependent) $\delta$-neighborhood of the boundary
\[\Omega_\delta=\{x\in\Omega : \mathrm{dist}(x,\Gamma)<\delta\}\,.\]
(The dependence of $\delta$ w.r.t. $h$ will be precised later.)
Then, consider $\mathscr{L}_{h,\delta}$ the self-adjoint realization of $(-ih\nabla-\mathbf{A})^2$ with the following boundary conditions
\[\mathbf{n}\cdot(-ih\nabla-\mathbf{A})\psi=0\,,\mbox{ on }\Gamma\,,\]
and
\[\psi=0\,,\mbox{ on } \{x\in\Omega : \mathrm{dist}(x,\Gamma)=\delta\}\,,\]
where $\delta < \delta_0$ with $\delta_0$ small enough to ensure the smoothness of the boundary of $\Omega_\delta$.
The quadratic form $\mathscr{Q}_{h,\delta}$ associated with $\mathscr{L}_{h,\delta}$
is defined for all $\psi\in \mathcal{V}_\delta$,
\[\mathscr{Q}_{h,\delta}(\psi)=\int_{\Omega_\delta}|(-ih\nabla-\mathbf{A})\psi|^2\dd x\,,\]	
with
\[\mathcal{V}_\delta=\{\psi\in H^1(\Omega_\delta) : \psi(x)=0\,,\mbox{ on }\{x\in\Omega : \mathrm{dist}(x,\Gamma)=\delta\}\}\,.\]	
The operator $\mathscr{L}_{h,\delta}$ still has a compact resolvent and we can consider the non-decreasing sequence of its eigenvalues $(\lambda_{n}(h,\delta))_{n\geq 1}$ repeated according to their multiplicity.

\begin{proposition}\label{prop.redtub}
Let $n\geq 1$. There exist $C,h_0,\alpha>0$ such that, for all $h\in(0,h_0)$ and  $\delta \in (0,\delta_0) $,
\[\lambda_n(h)\leq\lambda_n(h,\delta)\leq \lambda_n(h)+Ce^{-\alpha\delta/h^{\frac 12}}\,.\]
\end{proposition}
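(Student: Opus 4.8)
The two inequalities have different flavors. The lower bound $\lambda_n(h)\leq\lambda_n(h,\delta)$ is the easy one: it is a direct consequence of the min-max principle. Indeed, any test function $\psi\in\mathcal{V}_\delta$ extends by zero to an element $\tilde\psi\in H^1_{\mathbf{A}}(\Omega)$ (the Dirichlet condition on $\{\mathrm{dist}(x,\Gamma)=\delta\}$ guarantees $H^1$-regularity across the interface), and $\mathscr{Q}_h(\tilde\psi)=\mathscr{Q}_{h,\delta}(\psi)$, $\|\tilde\psi\|_{L^2(\Omega)}=\|\psi\|_{L^2(\Omega_\delta)}$. Hence the $n$-dimensional subspaces of $\mathcal{V}_\delta$ embed into those of $H^1_{\mathbf{A}}(\Omega)$ with the Rayleigh quotients unchanged, which yields $\lambda_n(h)\leq\lambda_n(h,\delta)$.

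For the upper bound I would use the Agmon localization of Proposition \ref{prop.Agmont} together with a cutoff argument, again feeding into min-max. Pick a smooth cutoff $\chi_\delta$ equal to $1$ on $\Omega_{\delta/2}$, supported in $\Omega_\delta$, with $|\nabla\chi_\delta|\leq C/\delta$. Let $V_n=\mathrm{span}(\psi_1,\dots,\psi_n)$ be spanned by the first $n$ eigenfunctions of $\mathscr{L}_h$; then $\chi_\delta V_n\subset\mathcal{V}_\delta$ is $n$-dimensional for $h$ small (the map $\psi\mapsto\chi_\delta\psi$ is injective on $V_n$ because $\|\psi-\chi_\delta\psi\|^2=\int_{\Omega\setminus\Omega_{\delta/2}}|1-\chi_\delta|^2|\psi|^2\leq C e^{-\alpha\delta/h^{1/2}}\|\psi\|^2$ by Proposition \ref{prop.Agmont}, so $\chi_\delta$ is close to the identity on $V_n$). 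For $\psi\in V_n$ the IMS/localization formula gives
\[
\mathscr{Q}_{h,\delta}(\chi_\delta\psi)=\mathscr{Q}_h(\chi_\delta\psi)=\Re\langle\mathscr{L}_h\psi,\chi_\delta^2\psi\rangle+h^2\|\,|\nabla\chi_\delta|\,\psi\|^2\,.
\]
The first term is $\leq \lambda_n(h)\|\chi_\delta\psi\|^2+$ (a term controlled by the spectral decomposition), and the error term $h^2\|\,|\nabla\chi_\delta|\,\psi\|^2\leq (Ch^2/\delta^2)\int_{\Omega\setminus\Omega_{\delta/2}}|\psi|^2\leq C e^{-\alpha\delta/h^{1/2}}\|\psi\|^2$ — here the exponential decay of the eigenfunctions beats the $\delta^{-2}$ loss as long as, say, $\delta\geq h^{1/2}$; for $\delta$ in the regime of interest this is fine, and one absorbs the constant. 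Combining and dividing by $\|\chi_\delta\psi\|^2\geq(1-Ce^{-\alpha\delta/h^{1/2}})\|\psi\|^2$, the Rayleigh quotient of $\chi_\delta\psi$ for $\mathscr{L}_{h,\delta}$ is at most $\lambda_n(h)+Ce^{-\alpha\delta/h^{1/2}}$ (after renaming $\alpha$). The min-max principle then gives $\lambda_n(h,\delta)\leq\lambda_n(h)+Ce^{-\alpha\delta/h^{1/2}}$.

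The main technical point — the only place real care is needed — is making the cross-terms in $\Re\langle\mathscr{L}_h\psi,\chi_\delta^2\psi\rangle$ for a general $\psi=\sum c_j\psi_j\in V_n$ genuinely harmless: one writes it as $\sum_j\lambda_j(h)|c_j|^2\|\chi_\delta\psi_j\|^2+$ off-diagonal terms $\sum_{j\neq k}c_j\bar c_k\lambda_k(h)\langle\psi_j,\chi_\delta^2\psi_k\rangle$, and since $\langle\psi_j,\psi_k\rangle=\delta_{jk}$ one has $|\langle\psi_j,\chi_\delta^2\psi_k\rangle-\delta_{jk}|\leq Ce^{-\alpha\delta/h^{1/2}}$, again by Proposition \ref{prop.Agmont}; with $\lambda_k(h)=\mathscr{O}(h)$ this bounds the whole correction by $Ch\,e^{-\alpha\delta/h^{1/2}}\|\psi\|^2\leq Ce^{-\alpha\delta/h^{1/2}}\|\psi\|^2$. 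This is routine once one commits to carrying the Agmon weight through every estimate, so I do not expect a genuine obstacle; it is essentially the standard Dirichlet-bracketing / exponential-decay argument specialized to the magnetic Neumann setting.
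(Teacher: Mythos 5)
Your proof is correct and follows essentially the same route as the paper's: extension by zero plus min-max for the lower bound, and a cutoff of the first $n$ eigenfunctions combined with the Agmon decay of Proposition~\ref{prop.Agmont} for the upper bound. The only cosmetic difference is that you invoke the IMS localization identity $\mathscr{Q}_h(\chi_\delta\psi)=\Re\langle\mathscr{L}_h\psi,\chi_\delta^2\psi\rangle+h^2\||\nabla\chi_\delta|\psi\|^2$ (which forces you to control off-diagonal terms $\langle\psi_j,\chi_\delta^2\psi_k\rangle$), whereas the paper simply expands $|\chi_\delta(-ih\nabla-\mathbf A)\psi-ih\psi\nabla\chi_\delta|^2$ and uses $|\chi_\delta|\le 1$ to bound the leading term by $\mathscr{Q}_h(\psi)\le\lambda_n(h)\|\psi\|^2$, avoiding the weighted cross-terms entirely.
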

\begin{proof}
The first inequality follows from the fact that $\Omega_\delta\subset\Omega$, the Dirichlet condition and the min-max principle. The second inequality follows from the Agmon estimates. Indeed, consider an orthonomal family of eigenfunctions $(\psi_{j})_{1\leq j\leq n}$ associated with $(\lambda_j(h))_{1\leq j\leq n}$ and let
\[\mathscr{E}_n(h,\delta)=\underset{1\leq j\leq n}{\mathrm{span}}\, \chi_\delta\psi_j\,.\]
Here $\chi_\delta$ is defined by $\chi_\delta(x)=\chi\left(\frac{\mathrm{dist}(x,\Gamma)}{\delta}\right)$ where $\chi$ is a smooth function such that $\chi(x)=1$ for $x\in [0,1/2)$ and $\chi(x)=0$ for $x\geq 1$. Thus, $\mathscr{E}_n(h,\delta)\subset\mathcal{V}_\delta$. Consider $\tilde\psi\in \mathscr{E}_n(h,\delta)$ and write
\[\tilde\psi=\chi_\delta\psi=\chi_\delta\sum_{j=1}^n\beta_j\psi_{j}\,.\]
We have
\begin{equation*}
\begin{split}
\mathscr{Q}_{h,\delta}(\chi_\delta\psi)&=\int_{\Omega}|\chi_\delta(-ih\nabla-\mathbf{A})\psi-ih\psi\nabla\chi_\delta |^2\dd x\\
&\leq \|(-ih\nabla-\mathbf{A})\psi\|^2+2h\|(-ih\nabla-\mathbf{A})\psi\|_{L^2(\Omega\setminus\Omega_{\delta/2})}\|\psi\nabla\chi_\delta \|+h^2\|\psi\nabla\chi_\delta\|^2\,.
\end{split}
\end{equation*}
Then, since the $(\psi_j)_{1\leq j\leq n}$ are orthogonal eigenfunctions, we get
\[\|(-ih\nabla-\mathbf{A})\psi\|^2\leq \lambda_n(h)\|\psi\|^2\,.\]
From Proposition \ref{prop.Agmont}, we have
\[\|\psi \nabla\chi_\delta \|\leq C\delta^{-1}e^{-\alpha\delta/2h^{\frac 12}}\|\psi\|\,,\quad \|(-ih\nabla-\mathbf{A})\psi\|_{L^2(\Omega\setminus\Omega_{\delta/2})}\leq Ch^{\frac 12}e^{-\alpha\delta/2h^{\frac 12}}\|\psi\|\,.\]
It follows that
\[\mathscr{Q}_{h,\delta}(\chi_\delta\psi)\leq \left(\lambda_n(h)+C(h^{\frac 32}\delta^{-1}+h^2\delta^{-2})e^{-\alpha\delta/h^{\frac 12}}\right)\| \psi\|^2\,,\]
and then
\[\mathscr{Q}_{h,\delta}(\chi_\delta\psi)\leq \left(\lambda_n(h)+C(h+h^{\frac 32}\delta^{-1}+h^2\delta^{-2})e^{-\alpha\delta/h^{\frac 12}}\right)\|\chi_\delta\psi\|^2\,.\]
\end{proof}

\subsection{Tubular coordinates and truncated operator}
We will use the canonical tubular coordinates $(s, t)$ where $s$ is the arc-length and $t$ the distance to the boundary. We recall some elementary properties of these coordinates. Let
\begin{equation}\label{eq:s}
(-L,L]\ni s\mapsto M(s)\in\Gamma
\end{equation}
be a parametrization of $\Gamma$.  The unit tangent vector of $\Gamma$ at the point $M(s)$ of the boundary is given by
\[
T(s):= M^{\prime}(s).
\]
We define the  curvature $\kappa(s)$ by the following identity
\[
T^{\prime}(s)=-\kappa(s)\, \mathbf{n}(s),
\]
where $\mathbf{n}(s)$ is the unit vector, normal to the
boundary, pointing outward at the point $M(s)$. We choose the
orientation of the parametrization $M$ to be counter-clockwise, so
\[
\det(T(s),\mathbf{n}(s))=1, \qquad \forall s\in  (-L,L].
\]
We introduce the change of coordinates
\begin{equation}
\Phi:\mathbb{R}/((2L)\mathbb{Z})\times(0,\delta)\ni  (s,t)\mapsto x= M(s)-t\, \mathbf{n}(s)\in \Omega_{\delta}.
\end{equation}
The determinant of the Jacobian of $\Phi$ is
given by
\begin{equation}\label{eq.m}
m(s,t)=1-t\kappa(s).
\end{equation}
Thanks to this change of coordinates, $\mathscr{L}_{h,\delta}$ is unitarily equivalent to $\mathscr{M}_{h,\delta}$ the self-adjoint realization on $L^2(\Gamma\times (0,\delta), m\dd s\dd t)$, of the differential operator
\begin{equation*}
-h^2m^{-1}\partial_t m\partial_t+m^{-1}\left(-ih\partial_s+\gamma_0-t+\frac{\kappa}{2}t^2\right)m^{-1}\left(-ih\partial_s+\gamma_0-t+\frac{\kappa}{2}t^2\right)\,,
\end{equation*}
where
\[m(s,t)=1-t\kappa(s)\,,\quad \gamma_0=\frac{|\Omega|}{|\Gamma|}\,,\]
with the boundary conditions
\[\partial_t\psi(s,0)=0\,,\quad \psi(s,\delta)=0\,.\]
This fact can be found in \cite[Appendix F]{FH10}. The first eigenfunctions of $\mathscr{M}_{h,\delta}$ also satisfy Agmon estimates (with respect to $t$).

\begin{proposition}
Let $M>0$. There exist $C, h_0, \alpha>0$ such that, for all $h\in(0,h_0)$, and all eigenpair $(\lambda,\psi)$ of $\mathscr{M}_{h,\delta}$ with $\lambda\leq\Theta_0 h+Mh^{\frac 32}$,
\[\int_\Omega e^{2\alpha t/h^{\frac 12}}|\psi|^2\dd s\dd t\leq C\|\psi\|^2\,,\]
and
\[\int_\Omega e^{2\alpha t/h^{\frac 12}}\left(\left|(-ih\partial_s+\gamma_0-t-\frac{\kappa}{2}t^2)\psi\right|^2+|h\partial_t\psi|^2\right)\dd s\dd t\leq Ch\|\psi\|^2\,.\]	
\end{proposition}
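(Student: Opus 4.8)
The plan is to follow the by-now classical scheme for Agmon estimates with a conjugated energy argument, transferring the estimates of Proposition \ref{prop.Agmont} through the diffeomorphism $\Phi$ and then improving the exponential weight from $\mathrm{dist}(x,\Gamma)$ to the transverse variable $t$. First I would set $\Phi_\alpha(s,t)=\alpha t/h^{1/2}$ (with a suitable truncation at the transverse scale to make everything finite and legitimate) and test the eigenvalue equation $\mathscr{M}_{h,\delta}\psi=\lambda\psi$ against $e^{2\Phi_\alpha}\psi$ in $L^2(\Gamma\times(0,\delta),m\,\dd s\,\dd t)$. The standard commutator identity (the "IMS"/Agmon formula) produces
\[
\mathscr{Q}_{h,\delta}(e^{\Phi_\alpha}\psi)-h^2\|\nabla\Phi_\alpha\, e^{\Phi_\alpha}\psi\|^2=\lambda\|e^{\Phi_\alpha}\psi\|^2\,,
\]
where the gradient is taken in the metric coming from $m$; the only nonzero component of $\nabla\Phi_\alpha$ is $\partial_t\Phi_\alpha=\alpha/h^{1/2}$, so $h^2\|\partial_t\Phi_\alpha\,e^{\Phi_\alpha}\psi\|^2=\alpha^2 h\|e^{\Phi_\alpha}\psi\|^2$, which is a lower-order term once $\alpha$ is small.

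Next I would use the quantitative lower bound for the quadratic form coming from the de Gennes operator: up to errors controlled by $\delta$ (hence by $h^{1/2}$ if $\delta\sim h^{1/2}$, or uniformly if $\delta$ is a fixed small constant), one has, for functions supported in $\Omega_\delta$,
\[
\mathscr{Q}_{h,\delta}(\phi)\geq h\int_{\Gamma\times(0,\delta)}\mu_1\!\left(\tfrac{\gamma_0-hD_s}{\text{(formal)}}\right)|\phi|^2 \geq (\Theta_0-C\delta)h\,\|\phi\|^2
\]
and, crucially, an improved bound away from the boundary: on the region $\{t\geq \epsilon h^{1/2}\}$ the effective one-dimensional transverse energy exceeds $\Theta_0 h$ by a definite amount, so that $\mathscr{Q}_{h,\delta}(\phi)\geq(\Theta_0+\eta)h\,\|\phi\|^2$ for $\phi$ supported there. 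Splitting $e^{\Phi_\alpha}\psi$ with a cutoff at $t\sim\epsilon h^{1/2}$, feeding these two bounds into the Agmon identity, and absorbing the cross terms and the $\alpha^2 h$ term for $\alpha,\epsilon$ small enough relative to $\eta$ and to the spectral gap $M$, yields $\|e^{\Phi_\alpha}\psi\|^2\leq C\|\psi\|^2$ once the truncation in $\Phi_\alpha$ is removed by monotone convergence. The gradient estimate then follows by plugging this back into the Agmon identity to bound $\mathscr{Q}_{h,\delta}(e^{\Phi_\alpha}\psi)$, and rewriting $\mathscr{Q}_{h,\delta}(e^{\Phi_\alpha}\psi)$ in terms of $e^{\Phi_\alpha}(-ih\partial_s+\gamma_0-t-\tfrac{\kappa}{2}t^2)\psi$ and $e^{\Phi_\alpha}h\partial_t\psi$ modulo the already-controlled commutator terms and the metric factor $m$, which is bounded above and below on $\Omega_\delta$.

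The main obstacle, and the only genuinely non-bookkeeping point, is obtaining the improved lower bound for the transverse operator on $\{t\gtrsim h^{1/2}\}$ that beats $\Theta_0 h$: this is what forces the localization in $t$ rather than merely in $\mathrm{dist}(x,\Gamma)$, and it requires analyzing the model operator $h^2 D_t^2+(\gamma_0-hD_s-t+\tfrac{\kappa}{2}t^2)^2$ (after the rescaling $t=h^{1/2}\tau$) and using the fact that $\mu_1(\xi)>\Theta_0$ strictly away from $\xi_0$, together with the Neumann ground state $u_\xi$ being concentrated near $\tau=0$, so that a transverse cutoff at $\tau\sim\epsilon$ genuinely increases the energy. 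One should be careful that the arc-length operator $-ih\partial_s$ commutes with nothing in a naive way because of the $s$-dependence of $\kappa$; the clean way around this is to keep $\kappa$ frozen in the lower bound, paying an $O(\delta)$ error uniformly in $s$, which is harmless since we only need $\lambda\leq\Theta_0 h+Mh^{3/2}$ and a fixed small $\delta$. With this ingredient in hand the rest is the routine Agmon machinery, essentially identical to \cite[Theorem 4.1]{FH06} and \cite[Appendix F]{FH10}, and I would simply refer to those sources for the details while recording the above argument.
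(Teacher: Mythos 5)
Your plan is sound and essentially recovers the intended argument, but you misdiagnose the one point you single out as non-trivial. In the tubular coordinates $\Phi$, one has $t=\mathrm{dist}(x,\Gamma)$ identically on $\Omega_\delta$, and $\mathscr{M}_{h,\delta}$ is unitarily equivalent to $\mathscr{L}_{h,\delta}$; so this proposition is nothing more than Proposition \ref{prop.Agmont} read in the coordinates $(s,t)$ (the Agmon argument for Proposition \ref{prop.Agmont} applies verbatim to the truncated operator $\mathscr{L}_{h,\delta}$, using the same Dirichlet lower bound, and the eigenvalues of $\mathscr{L}_{h,\delta}$ lie in the right window by Proposition \ref{prop.redtub}). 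There is no ``improving the exponential weight from $\mathrm{dist}(x,\Gamma)$ to $t$'' to do, and accordingly the paper states this without proof.

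The ``main obstacle'' you identify --- an improved transverse lower bound on $\{t\gtrsim h^{1/2}\}$ that you propose to derive from the behaviour of $\mu_1(\xi)$ away from $\xi_0$ and the concentration of $u_\xi$ near $\tau=0$ --- is over-engineered and slightly off target. The clean input, spelled out in the paper immediately before Proposition \ref{prop.Agmont}, is that any function supported in $\{t>0\}$ and extended by zero across $\{t=\delta\}$ is a Dirichlet test function on $\Omega$, for which $\mathscr{Q}_h(\phi)\geq h\|\phi\|^2$. Since $\Theta_0<1$ this already beats $\lambda\leq\Theta_0 h+Mh^{3/2}$ by the fixed margin $(1-\Theta_0)h$; choosing the cutoff at a large but fixed multiple of $h^{1/2}$ and $\alpha$ small then absorbs the IMS commutator error, exactly as you describe. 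Your alternative route through $\mu_1(\xi)>\Theta_0$ conflates the Neumann transverse model (relevant near the boundary) with what is needed away from it (a Dirichlet bound, which holds uniformly in the tangential frequency with constant $1$, not merely $\Theta_0+\eta$); it is both harder to make precise and unnecessary.
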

These estimates invite us to consider an operator on the space domain $\Gamma\times (0,+\infty)$ instead of $\Gamma\times(0,\delta)$. For this we insert cutoff functions in the preceding operator. Let $c$ be a smooth real function equal to $1$ on $[0, 1]$ and $0$ for $t \geq 2$. Then, we let
\[\underline{m}(s,t)=1-tc(\delta^{-1}t)\kappa(s)\,.\]
Instead of $\mathscr{M}_{h,\delta}$, we consider
$\underline{\mathscr{M}}_{h,\delta}$ the self-adjoint realization on the Hilbert space $L^2(\Gamma\times (0,+\infty), \underline{m}\dd s\dd t)$, of the differential operator with associated eigenvalues $\underline{\lambda}_n(h,\delta)$.
\begin{multline*}
-h^2\underline{m}^{-1}\partial_t \underline{m}\partial_t\\
+\underline{m}^{-1}\left(-ih\partial_s+\gamma_0-t+c(\delta^{-1}t)\frac{\kappa}{2}t^2\right)\!\underline{m}^{-1}\left(-ih\partial_s+\gamma_0-t+c(\delta^{-1}t)\frac{\kappa}{2}t^2\right)\,,
\end{multline*}
with Neumann boundary condition on $t=0$. Note here that the additional truncation in front of $\kappa$ is introduced in order to make this term bounded (and later a lower order term) when $t$ is large.

Using the same truncation trick as in the proof of Proposition \ref{prop.redtub}, similar Agmon type estimates for $\underline{\mathscr{M}}_{h,\delta}$, and the min-max principle, we get the following.

\begin{proposition} \label{prop.redtubinter}
	Let $n\geq 1$. There exist $C,h_0,\alpha>0$ such that, for all $h\in(0,h_0)$ and  $\delta \in (0,\delta_0) $,
	\[\underline{\lambda}_n(h, \delta) \leq \lambda_n(h,\delta)\leq \underline{\lambda}_n(h, \delta)+ Ce^{-\alpha\delta/h^{\frac 12}}\,.\]
\end{proposition}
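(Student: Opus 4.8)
The plan is to mimic the proof of Proposition~\ref{prop.redtub}: the first inequality comes from a comparison of form domains, and the second from truncating the eigenfunctions of $\underline{\mathscr{M}}_{h,\delta}$ by means of the transverse Agmon estimates and invoking the min-max principle.

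For the bound $\underline{\lambda}_n(h,\delta)\le\lambda_n(h,\delta)$ I would first observe that, since $c(\delta^{-1}t)=1$ for $t\le\delta$, one has $\underline m=m$ on $\Gamma\times(0,\delta)$ and the differential expressions defining $\mathscr{M}_{h,\delta}$ and $\underline{\mathscr{M}}_{h,\delta}$ coincide there. Hence extension by zero identifies the form domain of $\mathscr{M}_{h,\delta}$ (written in tubular coordinates) with a subspace of the form domain of $\underline{\mathscr{M}}_{h,\delta}$ on which $\underline{\mathscr{Q}}_{h,\delta}$ restricts to $\mathscr{Q}_{h,\delta}$ and the $L^2$ norms agree. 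Feeding the span of the first $n$ eigenfunctions of $\mathscr{M}_{h,\delta}$ into the min-max characterization of $\underline{\lambda}_n(h,\delta)$ then yields the inequality.

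For the reverse bound I would take $L^2$-orthonormal eigenfunctions $(\underline\psi_j)_{1\le j\le n}$ of $\underline{\mathscr{M}}_{h,\delta}$ associated with $(\underline{\lambda}_j(h,\delta))_{1\le j\le n}$, which satisfy the transverse Agmon estimates for $\underline{\mathscr{M}}_{h,\delta}$ (proved as for $\mathscr{M}_{h,\delta}$ via the same truncation trick, with the extra factor $h$ on the gradient term), and, with $\chi$ as in the proof of Proposition~\ref{prop.redtub} and $\chi_\delta(t)=\chi(t/\delta)$, set $\underline{\mathscr{E}}_n(h,\delta)=\underset{1\le j\le n}{\mathrm{span}}\,\chi_\delta\underline\psi_j$. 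Each $\chi_\delta\underline\psi_j$ is supported in $\{t<\delta\}$ and vanishes at $t=\delta$, hence belongs to the form domain of $\mathscr{M}_{h,\delta}$; since $\|(1-\chi_\delta)\underline\psi_j\|\le Ce^{-\alpha\delta/(2h^{1/2})}\|\underline\psi_j\|$ by the Agmon estimates, the map $\underline\psi_j\mapsto\chi_\delta\underline\psi_j$ is injective for small $h$, so $\dim\underline{\mathscr{E}}_n(h,\delta)=n$. For $\underline\psi=\sum_j\beta_j\underline\psi_j$ I would then expand $\mathscr{Q}_{h,\delta}(\chi_\delta\underline\psi)=\underline{\mathscr{Q}}_{h,\delta}(\chi_\delta\underline\psi)$ (the coefficients agree on $\{t<\delta\}$), noting that multiplication by $\chi_\delta$ commutes with the tangential operator $-ih\partial_s+\gamma_0-t+\tfrac{\kappa}{2}t^2$ and produces only the extra term $h^2\|\chi_\delta'\underline\psi\|^2$ together with a cross term, both supported in $\{t>\delta/2\}$. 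The weighted gradient Agmon estimate controls these by $Ce^{-\alpha\delta/h^{1/2}}\|\underline\psi\|^2$, polynomial factors in $h$ and $\delta^{-1}$ being absorbed by shrinking $\alpha$, whereas $\underline{\mathscr{Q}}_{h,\delta}(\underline\psi)\le\underline{\lambda}_n(h,\delta)\|\underline\psi\|^2$ because the $\underline\psi_j$ are orthonormal eigenfunctions. Combining this with $\|\chi_\delta\underline\psi\|^2\ge(1-Ce^{-\alpha\delta/h^{1/2}})\|\underline\psi\|^2$ and $\underline{\lambda}_n(h,\delta)=O(h)$ (from the first inequality and Proposition~\ref{prop.redtub}), the min-max principle gives $\lambda_n(h,\delta)\le\underline{\lambda}_n(h,\delta)+Ce^{-\alpha\delta/h^{1/2}}$.

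I do not expect a genuine obstacle here: the argument is essentially a verbatim transcription of the one for Proposition~\ref{prop.redtub} onto the new pair of operators. The only points deserving a moment's attention are that the curved density $\underline m\,\dd s\,\dd t$ is uniformly comparable to $\dd s\,\dd t$ for $\delta<\delta_0$ (so the curvature corrections never spoil the estimates), and that the transverse cutoff leaves the tangential part of the quadratic form untouched, so that no loss occurs in the $s$-variable.
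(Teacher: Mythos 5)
Your proof follows exactly the one-sentence recipe the paper gives for this proposition (``Using the same truncation trick as in the proof of Proposition~\ref{prop.redtub}, similar Agmon type estimates for $\underline{\mathscr{M}}_{h,\delta}$, and the min-max principle''), and both directions are argued correctly: extension by zero for $\underline{\lambda}_n\le\lambda_n$ (legitimate because $\underline m=m$ and the coefficients agree on $\{t<\delta\}$), and truncation of putative eigenfunctions of $\underline{\mathscr{M}}_{h,\delta}$ for the reverse bound. The one caveat worth naming is exactly the one the paper itself flags in the Remark immediately after the proposition: at this stage it has not yet been established that $\underline{\mathscr{M}}_{h,\delta}$ has discrete spectrum near the bottom, so the orthonormal family $(\underline\psi_j)$ you truncate may not yet be known to exist, and the transverse Agmon estimates are stated for genuine eigenfunctions. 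Your proof therefore carries the same provisional step as the paper's, and is resolved in the same way (a posteriori, once the parametrix construction yields discreteness below $\Theta_0$). Apart from this shared caveat, your argument is a faithful and correct reconstruction of what the paper intends and does not deviate from its approach.
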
	
 \begin{remark}

 Actually, at this stage, we have not proved that the low-lying spectrum of $\underline{\mathscr{M}}_{h,\delta}$ is discrete. This will be a consequence of the forthcoming analysis.

 \end{remark}
  From now on we fix
 \[\delta=h^{\frac 14-\eta}\gg h^{\frac 14}\,,\]
 for some fixed $0 < \eta < 1/4$.  Note that this assumption is sufficient to ensure that remainder terms appearing in the latter proposition are indeed controlled by the main term which is of order $e^{-S/h^{\frac 14}}$ for some constant $S$ (see the main statement in Theorem \ref{thm.tunnel}).

\subsection{The rescaled operator}
The exponential localization at the scale $h^{\frac 12}$ near $t=0$ suggests to consider the partial rescaling
\[(s,t)=(\sigma,\hbar\tau)\,,\quad\mbox{ with }\hbar=h^{\frac 12}\,.\]
We also let
\[a_\hbar(\sigma,\tau)=1-\hbar \tau\kappa(\sigma) c_\mu(\tau)\,,
\quad c_\mu(\tau)=c(\mu\tau) \quad \textrm{ for } \mu =\hbar^{\frac 12+2\eta}\,, \]
 where we recall that $\eta$ is positive and small, and $c$ is the cutoff function introduced in the preceding section.
\begin{remark}\label{rem.mu}
The notation $\mu$ will be convenient later when expanding the operator in powers of $\hbar$, with coefficients depending on $\mu$.
\end{remark}
Note that
\[
a_\hbar = 1+\mathscr{O}(\hbar^{\frac 12-2\eta})\,.
\]
Upon dividing $\underline{\mathscr{M}}_{h,\delta}$ by $h$, we get the new operator $\mathscr{N}_\hbar$ acting on the space $L^2(\Gamma\times\R_+, a_\hbar\dd s\dd t)= L^2(\Gamma\times \R_+, \dd s\dd t)$, as the differential operator
\begin{multline*}
\mathscr{N}_\hbar = -a_\hbar^{-1}\partial_\tau a_\hbar\partial_\tau \\
+a_\hbar^{-1}\left(-i\hbar\partial_\sigma+\hbar^{-1}\gamma_0-\tau+\hbar c_\mu\frac{\kappa}{2}\tau^2\right)a_\hbar^{-1}\left(-i\hbar\partial_\sigma+\hbar^{-1}\gamma_0-\tau+\hbar c_\mu\frac{\kappa}{2}\tau^2\right)
\end{multline*}
with Neumann  condition on $\tau=0$. 
Note that 
\[\begin{split}
&\Dom(\mathscr{N}_\hbar)=\\
\Big\{&u\in L^2(\Gamma\times\R_+) : -\partial^2_\tau u\in L^2(\Gamma\times \R_+)\,, \big(-i\hbar\partial_\sigma+\hbar^{-1}\gamma_0-\tau\big)^2u\in L^2(\Gamma\times \R_+)\,,\\ 
&\partial_\tau u(\cdot,0)=0 \Big\}\,.
\end{split}
\]
We denote by $(\nu_n(\hbar))_{n\geq 1}$ its eigenvalues. Using then Propositions \ref{prop.redtub} and \ref{prop.redtubinter}, we get

\begin{proposition}\label{prop.redNh}
Let $n\geq 1$. There exist $K>\mathsf{S}$, $C,h_0>0$ such that, for all $h\in(0,h_0)$,
\[\lambda_n(h)-Ce^{-K/h^{\frac 14}}\,\leq\hbar^2\nu_n(\hbar)\leq \lambda_n(h)+Ce^{-K/h^{\frac 14}}\,.\]	
\end{proposition}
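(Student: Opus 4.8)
The plan is simply to chain together the comparisons already in place in this section. First I would record that $\mathscr{N}_\hbar$ is, by construction, $h^{-1}\underline{\mathscr{M}}_{h,\delta}$ read in the rescaled variable via $t=\hbar\tau$, $\hbar=h^{\frac12}$, with $\delta=h^{\frac14-\eta}=\hbar^{\frac12-2\eta}$ so that $\delta^{-1}t=\mu\tau$, $\mu=\hbar^{\frac12+2\eta}$. The map $f\mapsto\hbar^{\frac12}f(\sigma,\hbar\tau)$ is a unitary isomorphism from $L^2(\Gamma\times\R_+,\underline m\,\dd s\,\dd t)$ onto $L^2(\Gamma\times\R_+,a_\hbar\,\dd s\,\dd t)$: indeed $\underline m(\sigma,\hbar\tau)=a_\hbar(\sigma,\tau)$, and the Jacobian $\hbar$ of the substitution is exactly compensated by the $\hbar^{\frac12}$ prefactor. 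Conjugating $h^{-1}\underline{\mathscr{M}}_{h,\delta}$ by this isomorphism produces $\mathscr{N}_\hbar$, hence, at the level of Rayleigh quotients (and thus of min-max values, which is all we need in view of the remark following Proposition~\ref{prop.redtubinter}),
\[\hbar^2\nu_n(\hbar)=h\,\nu_n(\hbar)=\underline\lambda_n(h,\delta)\,,\qquad n\geq 1\,.\]

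Next I would combine Proposition~\ref{prop.redtub}, the unitary equivalence $\mathscr{L}_{h,\delta}\cong\mathscr{M}_{h,\delta}$, and Proposition~\ref{prop.redtubinter}. The first gives $\lambda_n(h)\leq\lambda_n(h,\delta)\leq\lambda_n(h)+Ce^{-\alpha\delta/h^{1/2}}$ and the second $\underline\lambda_n(h,\delta)\leq\lambda_n(h,\delta)\leq\underline\lambda_n(h,\delta)+Ce^{-\alpha\delta/h^{1/2}}$, so that
\[\lambda_n(h)-Ce^{-\alpha\delta/h^{1/2}}\leq\underline\lambda_n(h,\delta)\leq\lambda_n(h)+Ce^{-\alpha\delta/h^{1/2}}\,.\]
With $\delta=h^{\frac14-\eta}$ the exponent is $\alpha\delta/h^{1/2}=\alpha h^{-\frac14-\eta}=(\alpha h^{-\eta})\,h^{-\frac14}$; since $\alpha h^{-\eta}\to+\infty$ as $h\to0$, for any prescribed constant --- in particular for some $K>\mathsf{S}$ --- one has $\alpha h^{-\eta}\geq K$ once $h$ is small enough, hence $Ce^{-\alpha\delta/h^{1/2}}\leq Ce^{-K/h^{1/4}}$. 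Combining with the identity from the first step gives the claimed two-sided bound for $\hbar^2\nu_n(\hbar)$.

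No genuine difficulty arises here: the statement is a bookkeeping consequence of the tubular reduction. The two points deserving attention are the measure/Jacobian verification showing that the partial rescaling is a true unitary equivalence, and the observation that the choice $\delta=h^{\frac14-\eta}$ is made precisely so that the remainders beat the target scale $e^{-\mathsf{S}/h^{1/4}}$ --- the positive slack $\eta$ is what provides the factor $h^{-\eta}$ needed to absorb any fixed $K$. As already flagged, the low-lying spectrum of $\mathscr{N}_\hbar$ has not yet been shown to be discrete, so the inequalities should be read as comparisons of min-max quantities; they become genuine eigenvalue estimates once the analysis of the subsequent sections establishes discreteness.
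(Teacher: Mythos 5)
Your proof is correct and follows essentially the same route as the paper's, which simply invokes Propositions~\ref{prop.redtub} and \ref{prop.redtubinter} together with the rescaling $t=\hbar\tau$ (and the identity $\hbar^2=h$); you have merely spelled out the unitary change of variables and the arithmetic showing that $\delta=h^{1/4-\eta}$ makes the remainder $e^{-\alpha\delta/h^{1/2}}$ dominate any fixed $e^{-K/h^{1/4}}$. The observation that, pending the discreteness established later, the inequalities should be read as statements about min-max quantities is also consistent with the paper's own remark following Proposition~\ref{prop.redtubinter}.
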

This means that, in order to estimate the expected splitting between eigenvalues $\lambda_2(h)-\lambda_1(h)$ of the original operator, we can consider the corresponding splitting for the reduced and rescaled operator $\mathscr{N}_\hbar$ . The rest of the article is devoted to this problem.

\subsection{One well operators}

\subsubsection{Definitions}\label{sec.defonewell}
Let us consider the \enquote{one well operator} (attached to the right well). It is geometrically defined by surgery by removing a small neighborhood of the left curvature maximum, and gluing an infinite strip, see Figure \ref{fig.ow}. For this we choose first the curvilinear origin at the intersection of the upper part of $\Gamma$ and the vertical axis, and we  identify $\Gamma$ with $[s_\ell-2L, s_\ell]$.  Note that in these coordinates, we have $s_r< 0<s_\ell$. We consider then the following \it right well \rm differential operator
\begin{multline}\label{eq.Nhr}
 \mathscr{N}_{\hbar, r, \gamma_0}:= -a_\hbar^{-1}\partial_\tau a_\hbar\partial_\tau\\
+a_\hbar^{-1}\left(-i\hbar\partial_\sigma+\hbar^{-1}\gamma_0-\tau+\hbar c_\mu\frac{\kappa_r}{2}\tau^2\right)a_\hbar^{-1}\left(-i\hbar\partial_\sigma+\hbar^{-1}\gamma_0-\tau+\hbar c_\mu\frac{\kappa_r}{2}\tau^2\right)\,,
\end{multline}
acting on $L^2(\R\times\R_+, a_\hbar \dd \sigma \dd\tau)$ where $\kappa_r$ is an appropriate extension of $\kappa$ defined as follows:
\[\kappa_r=\kappa\,,\quad \mbox{ on }\quad I_{r,\eta}:=(s_\ell-2L+\eta, s_\ell-\eta)\,,\]
and $\kappa_r=0$ on $(-\infty, s_\ell-2L)\cup(s_\ell+\infty)$.
This extension may be chosen so that $\kappa_r$ has a unique and non-degenerate maximum at $s_r<0$.

\begin{center}
	\begin{figure}[ht!]
		\vspace*{-2cm}
		\includegraphics[width=12cm, angle=-90]{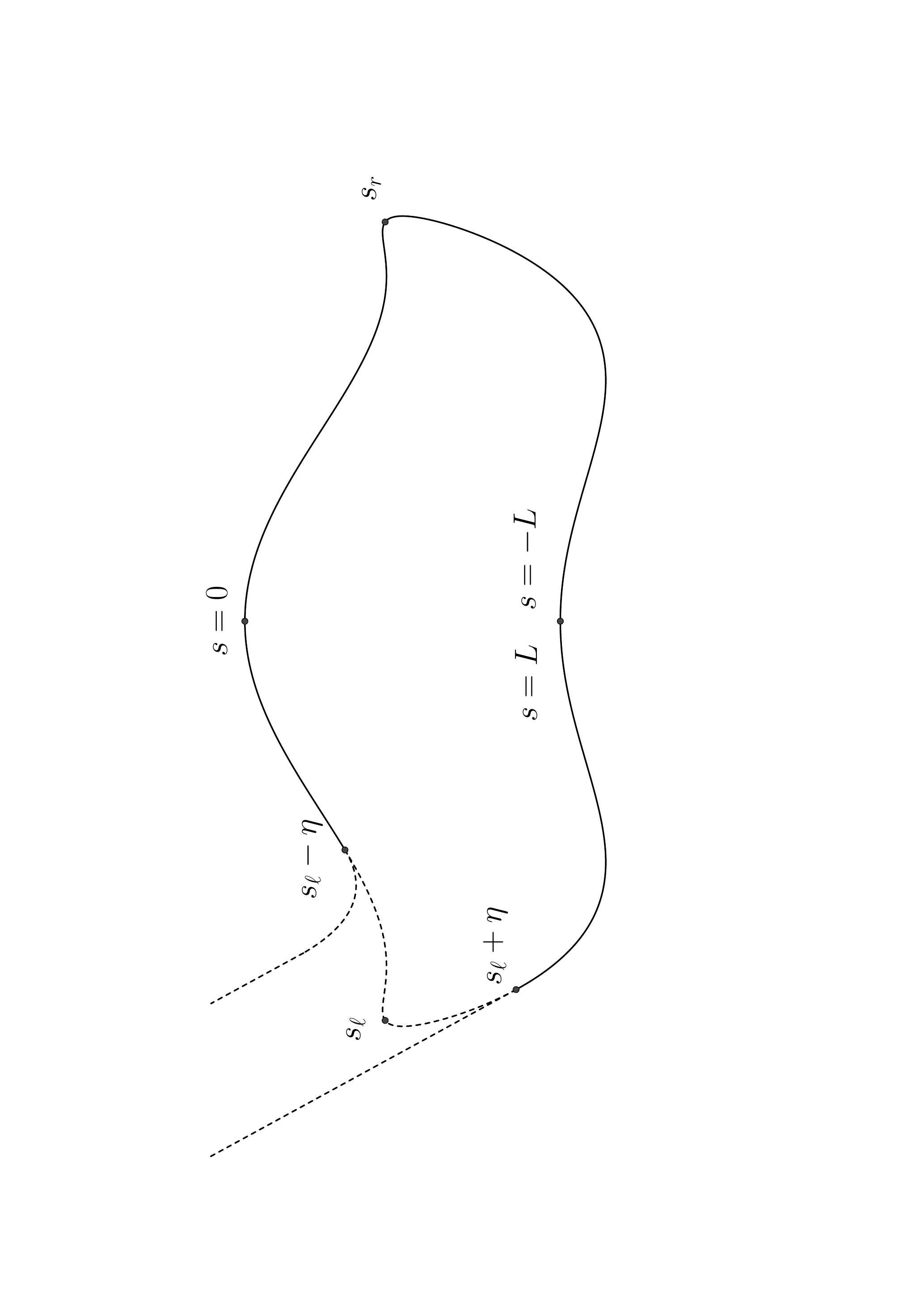}
		\vspace*{-3cm}
		\caption{One well domain attached to the right well}\label{fig.ow}
	\end{figure}
\end{center}
Since the space domain is now simply connected, $\mathscr{N}_{\hbar, r, \gamma_0}$ is unitarily equivalent to the flux-free operator $\mathscr{N}_{\hbar, r}:=\mathscr{N}_{\hbar, r, 0}$ since $e^{i\sigma\gamma_0/\hbar^2}\mathscr{N}_{\hbar, r, \gamma_0}e^{-i\sigma\gamma_0/\hbar^2}=\mathscr{N}_{\hbar, r, 0}$.
Note that the domain is the same as the one of the operator with constant magnetic field on $\R^2_+=\R\times\R_+$:
\[\begin{split}
&\Dom(\mathscr{N}_{\hbar,r})=\\
\Big\{&u\in L^2(\R^2_+) : -\partial^2_\tau u\in L^2(\R^2_+)\,, \big(-i\hbar\partial_\sigma-\tau\big)^2u\in L^2(\R^2_+)\,,\partial_\tau u(\cdot,0)=0 \Big\}\,.
\end{split}
\]
Let us now consider $u_{\hbar,r}$ a groundstate of the flux-free operator ${\mathscr{N}}_{\hbar, r, 0}$ with well at $s_r<0$. The bottom of the spectrum is indeed discrete. Let us briefly explain this. Firstly, the essential spectrum is $[\Theta_0,+\infty)$, since, at infinity with respect to $\sigma$, the operator coincides with the Neumann magnetic Laplacian on the half-plane, whose spectrum is $[\Theta_0,+\infty)$. Secondly, one knows (see Theorem \ref{thm.BKW}) that the spectrum below $\Theta_0$ is not empty.

The function
\begin{equation}\label{eq.phir0}
\check\phi_{\hbar, r}(\sigma,\tau) =
e^{-i\gamma_0 \sigma/\hbar^2}u_{\hbar,r}(\sigma,\tau)
\end{equation}
is then a groundstate for $\mathscr{N}_{\hbar, r, \gamma_0}$.

In order to define an operator adapted to the left well, we use the symmetry of $\Gamma$. More precisely, we
consider the symmetry operator
\[U f(\sigma,\tau)=\overline{f(-\sigma,\tau)}\,, \]
and define
$$
{\mathscr{N}}_{\hbar, \ell, \gamma_0}=U^{-1}{\mathscr{N}}_{\hbar, r, \gamma_0}U\,.
$$
Note that this operator also corresponds to the following construction. Identifying $\Gamma$ with $[s_r, s_r+ 2L]$, we can define on $\R$ the extended curvature $\kappa_\ell(\cdot) := \kappa_r(-\cdot)$ and note that it is equal to $\kappa$ on $(s_r+\eta, s_r+2L-\eta)$ and $0$  on $(-\infty, s_r)\cup(s_r+2L,+\infty)$.
In this way, $\kappa_\ell$ has a unique and non-degenerate maximum at $s_\ell>0$. The operator
 $\mathscr{N}_{\hbar, \ell, \gamma_0}$ acting on
$L^2(\R\times\R_+, a_\hbar \dd \sigma \dd\tau)$ with well at $s_\ell$ has then the same expression as the one of ${\mathscr{N}}_{\hbar, r, \gamma_0}$.
A natural groundstate for ${\mathscr{N}}_{\hbar, \ell, \gamma_0}$ is then
\[
\check\phi_{\hbar,\ell}:=U\check\phi_{\hbar,r}
\]
 and, letting $u_{\hbar,\ell}=Uu_{\hbar,r}$, we have
\begin{equation}\label{eq.phil0}
\check\phi_{\hbar, \ell}(\sigma,\tau) =
e^{-i\gamma_0 \sigma/\hbar^2}u_{\hbar,\ell}(\sigma,\tau)\,.
\end{equation}
In the following, we will focus on the right well and find a WKB approximation of $u_{\hbar,r}$.

\subsubsection{WKB construction}\label{sec.WKB}
The following fundamental theorem has been established in \cite[Theorem 5.6 \& Section 5.3.2]{BHR16}. Let us recall that
\[
V(s) =\frac{2C_1}{\mu''_1(\xi_0)}\sep{ \kappa_{\max}-\kappa_r(s)}\,.
\]
In what follows, we consider formal series in the sense of \cite[Notation 1.13]{BHR16}, where the neighbourhood on which the approximation (at any given order) occurs can be taken arbitrarily large but bounded.
 \begin{theorem}\label{thm.BKW}
Let us consider the following Agmon distance to the right well $s_r$:
\begin{equation} \label{eq.defPhi} \Phi(\sigma)=\int_{[s_r,\sigma]}\sqrt{V(\tilde\sigma)}\dd\tilde\sigma\,.
\end{equation}
There exist formal series $(b_n(\hbar)_{n\geq 0}$ and $(\delta_n(\hbar))_{n\geq 0}$ such that
\[b_n(\hbar)\sim\sum_{j\geq 0}b_{n,j}\hbar^{\frac j2}\,,\quad \delta_n(\hbar)\sim\sum_{j\geq 0}\delta_{n,j}\hbar^{\frac j2}\,,\]
and
\[\left(\mathscr{N}_{\hbar, r}-\delta_n(\hbar)\right)\Psi_{\hbar,r,n}=\mathscr{O}(\hbar^\infty)e^{-\Phi(\sigma)/\hbar^{\frac 12}}\,,\]
with
\begin{equation}\label{eq.psir}
\Psi_{\hbar,r,n}\underset{\hbar\to 0}{\sim}\hbar^{-\frac18} b_n(\hbar)e^{-\Phi(\sigma)/\hbar^{\frac 12}} e^{i\sigma\xi_0/\hbar}\,.
\end{equation}
Moreover,
\[\delta_{n,0}=\Theta_0\,,\quad \delta_{n,1}=0\,,\quad\delta_{n,2}=-C_1\kappa_{\max}\,,\quad\delta_{n,3}=(2n-1)C_1\Theta_0^{\frac 14}\sqrt{\frac{3k_2}{2}}\,,\]
and
\begin{equation}\label{eq.an0}
b_{n,0}(\sigma,\tau)=f_{n,0}(\sigma)u_{\xi_0}(\tau)\,,\quad b_{n,1}(\sigma,\tau)=i\Phi'(\sigma)f_{n,0}(\sigma)(\partial_\xi u_\xi)_{\xi_0}(\tau)+f_{n,1}(\sigma) u_{\xi_0}(\tau)\,,
\end{equation}
where $f_{n,0}$ solves the effective transport equation
\begin{equation}\label{eq.effectiveT}
\frac{\mu''_1(\xi_0)}{2}(\Phi'\partial_\sigma+\partial_{\sigma}\Phi')f_{n,0}+F(\sigma)f_{n,0}=(2n-1)C_1\Theta_0^{\frac 14}\sqrt{\frac{3k_2}{2}}f_{n,0}\,,
\end{equation}
$f_{n,1}$ is a solution of a similar transport equation, and where $F$ is a smooth function such that $F(s_r)=0$ and $\Re F=0$.

\end{theorem}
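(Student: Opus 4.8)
The plan is to construct, via the standard Schrödinger-type magnetic WKB machinery as developed in \cite{BHR16}, a formal series Ansatz adapted to the right well $s_r$, and to show that the transport hierarchy it produces is solvable in the appropriate class of formal series. First I would insert into $\mathscr{N}_{\hbar,r}$ the scaling $\tau=\hbar^{1/2}\tilde\tau$ already implicit in the definition of the operator, and conjugate by the expected oscillatory-exponential factor $e^{i\sigma\xi_0/\hbar}e^{-\Phi(\sigma)/\hbar^{1/2}}$, where $\Phi$ is the Agmon distance \eqref{eq.defPhi}. The point of choosing this $\Phi$ is precisely that it solves the effective eikonal equation \eqref{eq.eikonale}: the leading-order term in the conjugated operator is then (up to the scalar $\mu_1''(\xi_0)/2$) the fibered de Gennes operator $\mathfrak{L}_\xi$ evaluated at $\xi=\xi_0$, whose lowest eigenvalue is $\Theta_0$ and which has the non-degenerate ground state $u_{\xi_0}(\tau)$. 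This forces $\delta_{n,0}=\Theta_0$ and $b_{n,0}(\sigma,\tau)=f_{n,0}(\sigma)u_{\xi_0}(\tau)$ with $f_{n,0}$ still free.

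Next I would expand the conjugated operator $e^{\Phi/\hbar^{1/2}}e^{-i\sigma\xi_0/\hbar}(\mathscr{N}_{\hbar,r}-\delta_n(\hbar))e^{i\sigma\xi_0/\hbar}e^{-\Phi/\hbar^{1/2}}$ in powers of $\hbar^{1/2}$, obtaining operators $\mathscr{N}_0=\mathfrak{L}_{\xi_0}-\Theta_0$ (modulo the scalar factor), $\mathscr{N}_1$, $\mathscr{N}_2,\dots$, each polynomial in $\tau$, $\partial_\tau$, $\partial_\sigma$, and in the truncation data through $\mu=\hbar^{1/2+2\eta}$; applying the series $\sum b_n\hbar^{j/2}$ and collecting order by order yields the hierarchy
\begin{equation*}
(\mathscr{N}_0-\delta_{n,0})b_{n,0}=0,\qquad
(\mathscr{N}_0-\delta_{n,0})b_{n,k}=\delta_{n,k}b_{n,0}+R_{n,k},\quad k\ge 1,
\end{equation*}
where $R_{n,k}$ depends only on $b_{n,0},\dots,b_{n,k-1}$ and on $\delta_{n,1},\dots,\delta_{n,k-1}$. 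At each step the Fredholm alternative for the one-dimensional operator $\mathscr{N}_0-\delta_{n,0}$ (self-adjoint, with one-dimensional kernel spanned by $u_{\xi_0}$) requires projecting the right-hand side onto $u_{\xi_0}$ to kill its component in the kernel; this solvability condition is an ODE in $\sigma$ for the scalar function $f_{n,k-1}$ (the coefficient of $u_{\xi_0}$ in $b_{n,k-1}$), with $\delta_{n,k}$ entering linearly. The key explicit instance is $k=3$: the solvability condition becomes exactly the effective transport equation \eqref{eq.effectiveT}, a first-order linear transport equation along the flow of $\Phi'\partial_\sigma$; since $F(s_r)=0$ and $\Re F=0$, the "Wronskian" of the homogeneous equation degenerates precisely at $s_r$, which is what singles out the eigenvalue $\delta_{n,3}=(2n-1)C_1\Theta_0^{1/4}\sqrt{3k_2/2}$ as the unique value for which a nonvanishing formal-series solution $f_{n,0}$ (with prescribed value at $s_r$) exists, in complete analogy with the one-dimensional harmonic-oscillator quantization. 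The lower-order values $\delta_{n,1}=0$ and $\delta_{n,2}=-C_1\kappa_{\max}$ come out of the $k=1,2$ solvability conditions together with a Taylor expansion of $V$ near $s_r$. The formulas for $b_{n,1}$ in \eqref{eq.an0} are read off by solving $(\mathscr{N}_0-\Theta_0)b_{n,1}=R_{n,1}$ in the orthogonal complement of $u_{\xi_0}$: the inhomogeneous part involving $\Phi'$ produces the $(\partial_\xi u_\xi)_{\xi_0}$ term, and $f_{n,1}u_{\xi_0}$ is the remaining free kernel component.

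The main obstacle — and the reason this is a \emph{theorem} rather than a routine exercise — is twofold. First, one must work in the correct space of formal series: because of the truncations $c_\mu$ with $\mu=\hbar^{1/2+2\eta}$ appearing in $a_\hbar$ and in front of $\kappa_r$, the coefficients of the $\hbar$-expansion are not constants but functions of $\mu$, so \cite[Notation 1.13]{BHR16} must be invoked to make sense of "$b_n(\hbar)\sim\sum b_{n,j}\hbar^{j/2}$" and of the remainder estimate $\mathscr{O}(\hbar^\infty)e^{-\Phi/\hbar^{1/2}}$; one has to check at each order that the $\mu$-dependent lower-order contributions are genuinely subprincipal and do not spoil the hierarchy. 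Second, one must control the Gaussian-type decay in $\tau$ of every $b_{n,j}$ (they live in weighted Sobolev spaces on $\R_+$ built from $u_{\xi_0}$ and its generalized null-space) uniformly enough to reconstruct a genuine function $\Psi_{\hbar,r,n}$ from the Borel-type resummation of the series and to absorb the cutoff-induced errors into $\mathscr{O}(\hbar^\infty)e^{-\Phi/\hbar^{1/2}}$ on the arbitrarily large but bounded $\sigma$-neighbourhood. Once these points are secured the conclusion follows by truncating the series at a large order $N$, estimating the residual, and invoking the analysis already carried out in \cite[Section 5.3.2]{BHR16}; so in the present article this theorem is simply quoted.
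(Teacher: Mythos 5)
The paper does not give its own proof of this theorem; it cites \cite[Theorem 5.6 \& Section 5.3.2]{BHR16}, and you correctly acknowledge this at the end of your write-up. Your sketch of the cited construction — conjugating by $e^{i\sigma\xi_0/\hbar}e^{-\Phi(\sigma)/\hbar^{1/2}}$, expanding in powers of $\hbar^{1/2}$, and solving the resulting hierarchy at each step by the Fredholm alternative on the de Gennes fiber, with $\Phi$ fixed by the eikonal condition at order $\hbar$ and the transport equation for $f_{n,0}$ arising as the solvability condition at order $\hbar^{3/2}$ — is indeed the right picture, and it matches the magnetic Born--Oppenheimer WKB machinery the paper relies on.

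Two points deserve a correction. First, no further rescaling of the form $\tau = \hbar^{1/2}\tilde\tau$ is needed: the normal variable $\tau$ in $\mathscr{N}_{\hbar,r}$ is already the rescaled one ($t = \hbar\tau$ was performed in Section~\ref{sec.2}), and the leading conjugated operator is $D_\tau^2 + (\xi_0-\tau)^2$ as is, not that operator scaled by $\mu_1''(\xi_0)/2$; that coefficient only appears in the scalar effective operator after projection onto $u_{\xi_0}$. Second, and more substantively, your explanation of why $\delta_{n,3}$ is quantized is off. The degeneracy of the first-order transport ODE at $s_r$ comes from $\Phi'(s_r)=0$ (since $\Phi'=\sqrt{V}$ vanishes at the well), not from $F(s_r)=0$; it is the regular-singular point of $\Phi'\partial_\sigma$ at $s_r$, together with the requirement that $f_{n,0}$ be smooth there, that forces $\delta_{n,3}=(2n-1)C_1\Theta_0^{1/4}\sqrt{3k_2/2}$, exactly as in harmonic-oscillator quantization. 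The facts $F(s_r)=0$ and $\Re F=0$ serve a different purpose: they ensure that $f_{n,0}$ factors as $e^{i\alpha_{n,0}}\tilde f_{n,0}$ with $\alpha_{n,0}$ real-valued and $\tilde f_{n,0}$ solving the $F$-free real transport equation \eqref{eq.effectiveT2} — precisely the structure used in Remark~\ref{rem.tildef} and later in Section~\ref{sec.7} to match the effective electric interaction. Neither slip undermines the soundness of the overall plan, but the mechanism behind the quantization of $\delta_{n,3}$ should be attributed correctly.
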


\begin{remark}\label{rem.tildef}
Let us consider \eqref{eq.effectiveT}. We may write $f_{n,0}$ in the form $f_{n,0}(\sigma)=e^{i\alpha_{n,0}(\sigma)}\tilde f_{n,0}(\sigma)$ for some real-valued function $\alpha_{n,0}$, and where $\tilde f_{n,0}$ solves the \emph{real} classical transport equation
\begin{equation}\label{eq.effectiveT2}
\frac{\mu''_1(\xi_0)}{2}(\Phi'\partial_\sigma+\partial_{\sigma}\Phi')\tilde f_{n,0}=(2n-1)C_1\Theta_0^{\frac 14}\sqrt{\frac{3k_2}{2}}\tilde f_{n,0}\,.
\end{equation}
Note that $\tilde f_{1,0}(0)$ be can chosen to be positive and we will assume that it is the case. Following e.g. \cite[Section 2.2]{BHR17} we also choose the normalization $\norm{\Psi_{\hbar,r,n}} = 1$ (see also Section  \ref{sec.apprx1well}). This gives 
\begin{equation}  \label{eq.normtildef10}
\tilde f_{1,0}^2(0) = \sep{\frac{g}{\pi}}^{1/2} \mathsf{A}_{\mathsf{u}}\,,
\end{equation}
where $g$ and $\mathsf{A}_{\mathsf{u}}$ are defined in \eqref{eq.Aud}.

Let us now consider the phase shifts.  The $\alpha_{n,0}$ are chosen so that
\[\mu_1''(\xi_0)i\Phi'\alpha'_{n,0}+F=0\,,\quad \mbox{or, equivalently,}\quad  \Phi'\alpha'_{n,0}=\frac{iF}{\mu''_1(\xi_0)}\,.\]
Since $F(s_r)=0$ and $\Phi'$ vanishes linearly at $s_r$, we can write
\begin{equation} \label{eq.alphanprime}
\alpha'_{n,0}(\sigma)=\frac{iF(\sigma)}{\mu''_1(\xi_0)\Phi'(\sigma)}\,,
\end{equation}
where the right-hand-side can be seen as a real-valued ($\Re F=0$) smooth function defined at $s_r$
(by using the natural continuous extension). This determines the phase shift $\alpha_{n,0}$ up to an additive constant.

At this stage, we fixed the normalization of the WKB Ansatz. Later on, in Section \ref{sec.apprx1well}, we will take profit of this appropriate normalization, which determines the functions $\tilde f_{1,0}$ and $\alpha_{1,0}$. This normalization of $\tilde f_{1,0}$ is the one that we used in \cite[Section 2.2]{BHR17} when considering the tunneling effect for purely electric Schrödinger operators on the circle. This will be suitable to recognize, in our final computation, the interaction term for an electric Hamiltonian. The equation \eqref{eq.effectiveT2} is indeed the same as the one we obtain when performing a WKB construction for the semiclasssical electric Hamiltonian
\begin{equation}\label{eq.effectiveH}
\frac{\mu_1''(\xi_0)}{2}\hbar D_\sigma^2+\mathfrak{v}(\sigma)\,,\quad \mathfrak{v}=C_1(\kappa_{\max}-\kappa_r)\,.
\end{equation}
We define
\begin{equation}\label{eq.alpha0}
\alpha_0=\frac{\alpha_{1,0}(0)-\alpha_{1,0}(-L)}{L}\,,
\end{equation}
which is the phase shift appearing in Theorem \ref{thm.tunnel}
\end{remark}

\section{A Grushin problem}\label{sec.3}
In this section, we focus on the one well operator. Let us consider a smooth non-negative function, with bounded derivative, $\sigma\mapsto\varphi(\sigma)$ and consider the conjugate operator
\[\mathscr{N}^\varphi_{\hbar,r}=e^{\varphi/\hbar^{\frac 12}}\mathscr{N}_{\hbar, r} e^{-\varphi/\hbar^{\frac 12}}\,,\]
still acting on $\Dom(\mathscr{N}_{\hbar, r})$. 

Explicitly,
\begin{multline*}
\mathscr{N}^\varphi_{\hbar,r}=-a_\hbar^{-1}\partial_\tau a_\hbar\partial_\tau\\
+a_\hbar^{-1}\left(-i\hbar\partial_\sigma-\tau+i\hbar^{\frac 12}\varphi'+\hbar c_\mu\frac{\kappa_r}{2}\tau^2\right)a_\hbar^{-1}\left(-i\hbar\partial_\sigma-\tau+i\hbar^{\frac 12}\varphi'+\hbar c_\mu\frac{\kappa_r}{2}\tau^2\right)\,.
\end{multline*}
In order to lighten the notation, we write $\kappa$ and $\mathscr{N}^\varphi_{\hbar}$ instead of $\kappa_r$ and $\mathscr{N}^\varphi_{\hbar,r}$.  In all what follows we shall use the following notation in order to compare operators and deal with remainders:

\begin{notation} \label{not:granO} For formal operators $A$, $B$, $C$, $\ldots $ in $L^2(\R)$ we say that
$A = \ooo(B, C)$ if there is a constant $c \geq 0$ such that for all $u$ in $\sss(\R)$
\[
\norm{Au} \leq c (\norm{Bu} + \norm{Cu} + \ldots)\,.
\]
This definition naturally extends to $L^2(\R\times \R_+)$ and similar pivot spaces when taking test function satisfying in addition the good boundary conditions.
\end{notation}

\subsection{A pseudo-differential operator with operator-valued symbol}\label{sec.cutoff}

We notice that $\mathscr{N}^\varphi_{\hbar}$ can be written as an $\hbar$-pseudo-differential operator with an operator-valued symbol $n_\hbar(\sigma,\xi)$ having an expansion in powers of $\hbar^{\frac 12}$:
\[\mathscr{N}^\varphi_{\hbar}=\Op n_\hbar\,,\]
with $\Op n_\hbar$ acting on $\mathcal{S}(\R_\sigma, \mathcal{S}(\overline{\R_{+,\tau}}))$ through the usual quantization formula (see \cite[Definition 2.1.7]{Keraval})
\[
\Op n_\hbar\, u (\sigma) = \frac{1}{(2\pi\hbar)} \iint_{\R^2} e^{i(\sigma-\tilde{\sigma})\cdot \xi} n_\hbar \left(\frac{\sigma+\tilde{\sigma}}{2}, \xi\right)u(\tilde{\sigma}) \dd \tilde{\sigma} \dd \xi,
\]
with here
\[n_\hbar=n_0+\hbar^{\frac 12}n_1+\hbar n_2+  \hbar^{\frac 32}n_3 + \hbar^2 \tilde{r}_\hbar  \,,\]
 and where after a computation using the usual symbolic rules, we get
\begin{equation}\label{eq.nj}
\begin{split}
n_0&=-\partial_\tau^2+(\xi-\tau)^2\,,\\
n_1&=2i(\xi-\tau)\varphi'\,,\\
n_2&=-\varphi'^2+\kappa c_\mu \partial_\tau+c_\mu \kappa(\xi-\tau)\tau^2+2\kappa\tau c_\mu (\xi-\tau)^2  +\kappa \tau c_\mu'\left(\tau \right)\,, \\
\Re n_3& = 0\,, \\
 \tilde{r}_{\hbar} &= {\ooo}(\tau^4, (\xi-\tau)^2 \tau^2, (\xi-\tau)\tau, \tau^2\partial_\tau)\,.
\end{split}
\end{equation}
 In the last expression, the notation ${\ooo}$ is defined in Notation \ref{not:granO}. The expansion was performed with respect to $\hbar$, with $\mu$ considered a parameter (see Remark \ref{rem.mu}). It will be explained later how to deal with the remainder $\tilde{r}_\hbar$. It involves in particular powers of $\tau$ which can be controlled via the \emph{normal} localization estimates, and thus are not really problematic.
Note the in \eqref{eq.nj}, $\mu$ is considered as a parameter although it may depend on $\hbar$.

Now the frequency variable $\xi$ is a priori unbounded, and in the next step of the analysis, we therefore \enquote{truncate} our operator in $\xi$ to get a bounded symbol. Let us consider a smooth, bounded, and increasing odd function $\chi$ such that $\chi(\xi)=\xi$ for $\xi\in[-\frac{\xi_0}{2},\frac{\xi_0}{2}]$. We let $\eta_\pm=\pm\lim_{\xi\to\pm\infty}\chi(\xi)$ and assume that $\eta_-\in(0,\xi_0)$.

We let, for all $\xi\in\R$,
\[\chi_1(\xi)=\xi_0+\chi(\xi-\xi_0)\,.\]
Then, the function $\xi\mapsto\mu_1(\chi_1(\xi))$ is bounded  and still has a unique minimum at $\xi_0$, which is non-degenerate and not attained at infinity. Note that, by construction, we have, for all $\xi\in[\frac{\xi_0}{2},\frac{3\xi_0}{2}]$, $\mu_1(\chi_1(\xi))=\mu_1(\xi)$. Since $\mu_1(\xi)<1$ for all $\xi>0$ and $\eta_-\in(0,\xi_0)$, we also notice that
\begin{equation}\label{eq.mu1chi1}
\mu_1\circ\chi_1(\R)\subset[\Theta_0,1)\,.
\end{equation}
We will consider
\begin{equation}\label{eq.p}
\Op p_\hbar\,,\quad \mbox{ with } \quad p_\hbar(s,\xi)=n_\hbar(s,\chi_1(\xi))\,,
\end{equation}
and notice in particular that the principal operator symbol of $\Op p_\hbar$ is
\[p_0(s,\xi)=-\partial^2_\tau+(\chi_1(\xi)-\tau)^2\,.\]
For a recent panorama of pseudo-differential operators with operator symbols, we refer to \cite[Chapitre 2]{Keraval} (see also \cite[Appendix B]{GMS91}). The introduction of the function $\chi_1$ is inspired by \cite[Section 6.3]{Keraval}.

\subsection{The Grushin problem for the principal operator symbol}
Let us first consider the principal symbol $p_0$ (whose domain is independent of $\xi$). Let $z\in\C$ such that $\Re z\in(\Theta_0-\eps,\Theta_0+\eps)$, with $\varepsilon>0$ such that $\Theta_0+\varepsilon<1$. Consider the matrix operator:
\[\mathscr{P}_{0,z}(\xi):=\begin{pmatrix}
p_0-z&\cdot v_\xi\\
\langle\cdot,v_\xi\rangle&0
\end{pmatrix}\in S(\R^2_{s,\xi},\mathscr{L}(\Dom p_0\times\C,L^2(\R_+)\times\C))\,,
\]
acting on $\Dom(p_0)\times\C$ and valued in $L^2(\R_+)\times\C$. Here $v_\xi=u_{\chi_1(\xi)}$. We also denote by $\Pi_\xi$, or simply $\Pi$ the orthogonal projection on $\C v_\xi$.
\begin{notation}\label{not.S}
The notation $P\in S(\R^2,\mathscr{L}(\Dom p_0\times\C,L^2(\R_+)\times\C))$ means that
\begin{enumerate}[---]
\item $P=P(x,\xi)$ is a family of closed operators whose domain does not depend on $(x,\xi)$, and whose graph norms are equivalent uniformly in $(x,\xi)$,
\item for all $\alpha\in\N^2$, there exists $C_\alpha>0$ such that $\|\partial^\alpha_{ s,\xi } P\cdot\|\leq C_\alpha\|\cdot\|_P$, uniformly with respect to $(x,\xi)$, and where $\|\cdot\|_P$ is the graph norm of $P$.
\end{enumerate}
\end{notation}
This class can be thought as a generalization of the standard class of scalar symbols \[S(1)=\{p\in\mathscr{C}^{\infty}(\R^2,\C)\,,\forall\alpha\in\N^2\,,\exists C_\alpha>0\,, \|\partial^\alpha_{s,\xi} p\|\leq C_\alpha\}\]
to operator-valued symbols.
Note however that, contrary to the scalar case, this is not an algebra.  More details can be found in \cite[Section 6.3]{Keraval}.

\begin{lemma}\label{lem.para0}
For all $\xi\in\R$, $\mathscr{P}_{0,z}(\xi)$ is bijective and
\[\mathscr{Q}_{0,z}(\xi):=\mathscr{P}^{-1}_{0,z}(\xi)=\begin{pmatrix}
(p_0-z)^{-1}\Pi^\perp&\cdot v_\xi\\
\langle\cdot,v_\xi\rangle&z-\mu_1(\xi\chi_1(\xi))
\end{pmatrix}\,,
\]
and
\[\mathscr{Q}_{0,z}\in S(\R^2_{s,\xi},\mathscr{L}(L^2(\R_+)\times\C,\Dom p_0\times\C))\,.\]
Here $\Pi^\perp$ denotes the orthogonal projection on ${v_\xi}^\perp$.
\end{lemma}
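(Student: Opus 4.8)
The plan is to invert $\mathscr{P}_{0,z}(\xi)$ explicitly, decomposing $L^2(\R_+)$ along the one-dimensional eigenspace $\C v_\xi$ and its orthogonal complement, and then to read off the symbol bounds for $\mathscr{Q}_{0,z}$ from the smoothness of the de Gennes ground state and a uniform spectral gap. The spectral input is this: for fixed $\xi$, the operator $p_0(s,\xi)=-\partial_\tau^2+(\chi_1(\xi)-\tau)^2$ with Neumann condition at $\tau=0$ is, independently of $s$, the de Gennes operator $\mathfrak{L}_{\chi_1(\xi)}$; its lowest eigenvalue $\mu_1(\chi_1(\xi))$ is simple, with $L^2$-normalized eigenfunction $v_\xi=u_{\chi_1(\xi)}$, and $p_0(s,\xi)$ commutes with the rank-one orthogonal projector $\Pi=\Pi_\xi$ onto $\C v_\xi$. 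Since $\chi_1$ has relatively compact range and $\mu_2$ is continuous with $\mu_2(\xi)>\mu_1(\xi)\geq\Theta_0$ for all $\xi$, there is $\eps_0>0$ with $\inf_{\xi\in\R}\mu_2(\chi_1(\xi))\geq\Theta_0+2\eps_0$; on shrinking $\eps$ so that $\eps<\eps_0$, the restriction of $p_0(s,\xi)-z$ to $v_\xi^\perp$ is then boundedly invertible for every $z$ in the strip $\Re z\in(\Theta_0-\eps,\Theta_0+\eps)$, with norm and graph norm bounded uniformly in $(s,\xi)$ and in $z$, using $\norm{p_0(p_0-z)^{-1}\Pi^\perp f}\leq\norm{f}+|z|\,\norm{(p_0-z)^{-1}\Pi^\perp f}$. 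Here $(p_0-z)^{-1}\Pi^\perp$ denotes this inverse extended by $0$ on $\C v_\xi$.

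To solve $\mathscr{P}_{0,z}(\xi)\binom{u}{u_-}=\binom{f}{g}$, that is $(p_0-z)u+u_-v_\xi=f$ and $\langle u,v_\xi\rangle=g$, I would write $u=g\,v_\xi+\Pi^\perp u$ and use that $(p_0-z)(g\,v_\xi)=g\,(\mu_1(\chi_1(\xi))-z)\,v_\xi$ lies in $\C v_\xi$ while $(p_0-z)\Pi^\perp u$ lies in $v_\xi^\perp$. The $\C v_\xi$-component of the first equation then yields $u_-=\langle f,v_\xi\rangle+(z-\mu_1(\chi_1(\xi)))\,g$ and its $v_\xi^\perp$-component yields $\Pi^\perp u=(p_0-z)^{-1}\Pi^\perp f$, whence $u=(p_0-z)^{-1}\Pi^\perp f+g\,v_\xi$. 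These formulas determine $(u,u_-)$ uniquely given $(f,g)$ and conversely define a solution for every right-hand side; hence $\mathscr{P}_{0,z}(\xi)$ is bijective with inverse $\mathscr{Q}_{0,z}(\xi)$ given by the announced matrix.

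It remains to check that $\mathscr{Q}_{0,z}\in S(\R^2_{s,\xi},\mathscr{L}(L^2(\R_+)\times\C,\Dom p_0\times\C))$. There is no $s$-dependence, so only $\xi$-derivatives are at stake. Analytic perturbation theory for the isolated simple eigenvalue $\mu_1$, together with the smoothness of $\chi_1$, makes $\xi\mapsto\mu_1(\chi_1(\xi))$ and $\xi\mapsto v_\xi$ smooth with all $\xi$-derivatives bounded — the latter in the graph norm of $p_0$, uniformly since $\chi_1(\R)$ is relatively compact — while $\partial_\xi p_0(s,\xi)=2\chi_1'(\xi)(\chi_1(\xi)-\tau)$, and likewise its higher $\xi$-derivatives, is controlled by $p_0^{1/2}$ with bounded coefficients; this is precisely why $\mathscr{P}_{0,z}\in S(\R^2,\mathscr{L}(\Dom p_0\times\C,L^2(\R_+)\times\C))$ as stated before the lemma. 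Differentiating the identity $\mathscr{Q}_{0,z}=\mathscr{P}_{0,z}^{-1}$ by the Leibniz rule writes $\partial^\alpha_\xi\mathscr{Q}_{0,z}$ as a finite sum of products alternating $\mathscr{Q}_{0,z}$ and $\partial^\beta_\xi\mathscr{P}_{0,z}$ with $|\beta|\leq|\alpha|$, and bounding each factor by the uniform estimates above gives the claim. The only delicate point is the uniform spectral gap: one has to make sure that the truncation $\chi_1$ genuinely confines $\chi_1(\xi)$ to a compact set, so that $\mu_2(\chi_1(\xi))$ stays bounded away from $\Theta_0$ uniformly in $\xi$, and that this survives differentiation in $\xi$.
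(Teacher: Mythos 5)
Your proof is correct, and its core --- decompose $L^2(\R_+)$ along $\C v_\xi\oplus v_\xi^\perp$, solve the two components of $\mathscr{P}_{0,z}(\xi)\binom{u}{u_-}=\binom{f}{g}$ separately, and read off the inverse --- is exactly the paper's argument. The one point where you diverge is the uniform invertibility of the restriction of $p_0-z$ to $v_\xi^\perp$. You obtain the gap $\inf_\xi\mu_2(\chi_1(\xi))>\Theta_0$ from relative compactness of $\chi_1(\R)$ and continuity of $\mu_2$, and you then \emph{shrink} $\varepsilon$ so that the strip $\Re z\in(\Theta_0-\varepsilon,\Theta_0+\varepsilon)$ stays below that gap. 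The paper instead invokes the spectral bound $\min_\xi\mu_2(\xi)>1$ (cited from Fournais--Helffer), which, combined with the standing hypothesis $\Theta_0+\varepsilon<1$, yields a uniform gap $\mu_2(\chi_1(\xi))-\Re z\geq c>0$ for the $\varepsilon$ already fixed at the start of the section, with no shrinking and with no use of boundedness of $\chi_1(\R)$. Since $\varepsilon$ is fixed before the lemma, the paper's route is the cleaner one; your compactness argument is otherwise sound, and it would become airtight for the stated $\varepsilon$ if you quoted that lower bound on $\mu_2$. You also supply a sketch, via analytic perturbation theory for $v_\xi$ and the Leibniz rule applied to $\mathscr{Q}_{0,z}\mathscr{P}_{0,z}=\mathrm{Id}$, of the symbol-class membership $\mathscr{Q}_{0,z}\in S(\R^2,\mathscr{L}(L^2(\R_+)\times\C,\Dom p_0\times\C))$; the paper states this without proof, so that part of your write-up is a correct and useful addition.
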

\begin{proof}
Let $(v,\beta)\in L^2(\R_+)\times\C$ and let us look for $(u,\alpha)\in\Dom(p_0)\times\C$ such that $\mathscr{P}_{0,z}(\xi)(u,\alpha)^T=(v,\beta)^T$. In other words,
\[(p_0-z)u=v-\alpha v_\xi\,,\qquad \langle u,v_\xi\rangle=\beta\,,\]
or
\begin{equation}\label{eq.Fred}
(p_0-z) \Pi^{\perp}u=v-\alpha v_\xi-\beta(p_0-z)v_\xi=v-\alpha v_\xi-\beta( \mu_1(\chi_1(\xi))-z)v_\xi\,,
\end{equation}
with $\langle u,v_\xi\rangle=\beta$.

The operator $p_0-z$ stabilizes $(\C v_\xi)^\perp$ and induces an operator. Moreover, there exists $c>0$ such that for all $u\in \Dom(p_0)\cap (\C v_\xi)^\perp$ and all $z\in\C$ such that $\Re z\in(\Theta_0-\varepsilon,\Theta_0+\varepsilon)$,
\[\Re\langle(p_0-z)u,u\rangle=\langle (p_0-\Re z)u,u\rangle\geq (\mu_2(\chi_1(\xi))-\Re z)\|u\|^2\geq c\|u\|^2\,,\]
where we used the self-adjointness of $p_0$, the min-max principle and the fact that $\min\mu_2>1$ (see \cite[Proposition 3.2.2 \& Remark 3.2.6]{FH10}) and $\Theta_0+\varepsilon<1$.
Thus, the operator $(p_0-z)_{|(\C v_\xi)^\perp}$ is injective with closed range and, by considering the adjoint, it is bijective. We also notice that
\[\|(p_0-z)^{-1}\Pi^\perp\|\leq (\mu_2(\chi_1(\xi))-\Re z)^{-1}\leq c^{-1}\,.\]
The equation \eqref{eq.Fred} has a solution if and only if the r.h.s. belongs to $(\C v_\xi)^\perp$, that is
\[\alpha=\langle v,v_\xi\rangle-\beta(\mu_1(\chi_1(\xi))-z)\,.\]
This unique solution is given by
\[\Pi^{\perp}u=(p_0-z)^{-1}\Pi^\perp(v-\alpha v_\xi-\beta(\mu_1(\chi_1(\xi))-z)v_\xi)=(p_0-z)^{-1}\Pi^\perp v\,.\]
Therefore, $u=\beta v_\xi+(p_0-z)^{-1}\Pi^\perp v$.

\end{proof}

\subsection{Pseudo-differential dimensional reduction and subprincipal terms}
Let us now consider the full symbol
\[\mathscr{P}_{z}(s,\xi):=\begin{pmatrix}
p_\hbar-z&\cdot v_\xi\\
\langle\cdot,v_\xi\rangle&0
\end{pmatrix}\in S(\R^2_{s,\xi}, \mathscr{L}(\Dom p_0\times\C,L^2(\R_+)\times\C))\,,
\]
and notice that we can write
\[\mathscr{P}_{z}=\underbrace{\mathscr{P}_{0,z} + \hbar^{\frac12}\mathscr{P}_{1} + \hbar\mathscr{P}_{2} + \hbar^{\frac32}\mathscr{P}_{3}}_{\mathscr{P}_{z}^{[3]}} + \hbar^2\mathscr{R}_{\hbar}\,,\quad \]
where
$$
\mbox{ for $j\geq 1$}\,,\quad  \mathscr{P}_{j}=\begin{pmatrix}
p_j& 0\\
0&0
\end{pmatrix}\,, \qquad \mathscr{R}_{\hbar}=\begin{pmatrix}
r_\hbar& 0\\
0&0
\end{pmatrix}
$$
and from \eqref{eq.nj} and using the fact that $\chi_1(\xi)$ is now bounded,  we can write
\begin{equation}\label{eq.pj}
\begin{split}
p_0&=-\partial_\tau^2+(\chi_1(\xi)-\tau)^2\,,\\
p_1&=2i(\chi_1(\xi)-\tau)\varphi'\,,\\
p_2&=-\varphi'^2+\kappa c_\mu \partial_\tau+c_\mu \kappa(\chi_1(\xi)-\tau)\tau^2+2\kappa\tau c_\mu (\chi_1(\xi)-\tau)^2  +\kappa \tau c_\mu'\left(\tau \right)\,, \\
\Re p_3& = 0\,, \\
 r_{\hbar} &= {\ooo}(\tau^4, \tau^2\partial_\tau)\,.
\end{split}
\end{equation}

\begin{remark} Note that in the last expansion  at order $3$ w.r.t. $\hbar^{\frac12}$, we do not need the exact expression of $p_3$ and will use later that it is purely imaginary.  The structure of the last Taylor expansion is rather subtle. Indeed we do not care about the cutoff in variable $\tau$ induced by $c_\mu$, but we have to keep in mind that up to loosing powers of $\hbar$, the involved operators are indeed in $S(1)$. This property allows to do all the computations with test functions in $\mathrm{ Dom} (p_0) \times \C$ and gives a meaning to the composition of operators done in the next theorem. In particular,  this expansion is uniform in the parameter $\mu$. Let us notice that the powers of $\tau$ and $\partial_\tau$ in $r_\hbar$ will be compensated later by the normal decay.
\end{remark}

The following theorem gives then  an approximated parametrix of operator $\Op\mathscr{P}_{z}$, that is, in our context, an inverse up to a remainder of order $\hbar^2$.
\begin{theorem}\label{thm.Grushin}
Consider the operator symbol
\[\mathscr{Q}^{[3]}_z=\mathscr{Q}_{0,z}+\hbar^{\frac 12}\mathscr{Q}_{1,z}+\hbar\mathscr{Q}_{2,z}+\hbar^{\frac 32}\mathscr{Q}_{3,z}\,\]	
where $\mathscr{Q}_{0,z}$ is given in Lemma \ref{lem.para0} and
\begin{equation}\label{eq.Qj}\begin{split}
\mathscr{Q}_{1,z}&=-\mathscr{Q}_{0,z}\mathscr{P}_{1}\mathscr{Q}_{0,z}\,,\\
\mathscr{Q}_{2,z}&=-\mathscr{Q}_{0,z}\mathscr{P}_{2}\mathscr{Q}_{0,z}
-\mathscr{Q}_{1,z}\mathscr{P}_{1}\mathscr{Q}_{0,z}\,,\\
\mathscr{Q}_{3,z}&=-\mathscr{Q}_{0,z}\mathscr{P}_{3}\mathscr{Q}_{0,z}
-\mathscr{Q}_{1,z}\mathscr{P}_{2}\mathscr{Q}_{0,z}
-\mathscr{Q}_{2,z}\mathscr{P}_{1}\mathscr{Q}_{0,z}
-\mathscr{C}_z\,,
\end{split}
\end{equation}
 with
\[ 2i\mathscr{C}_z=  \left(\{\mathscr{Q}_{0,z},\mathscr{P}_1\} +
\{\mathscr{Q}_{1,z},\mathscr{P}_{0,z}\} \right) \mathscr{Q}_{0,z}\,,\]
where we used the classical notation for the Poisson bracket
\[\{\mathscr{Q},\mathscr{P}\}=\partial_\xi\mathscr{Q}\cdot\partial_s\mathscr{P}-\partial_s\mathscr{Q}\cdot\partial_\xi\mathscr{P}\,.\]
Then, we have
\begin{equation}\label{eq.approxparam}
\Op(\mathscr{Q}^{[3]}_{z}) \Op(\mathscr{P}_{z}) =\mathrm{Id}
+ \hbar^2 \ooo( \seq{\tau}^6)\,.
\end{equation}
Moreover, we have the following explicit description. Letting
	\[\mathscr{Q}^{[3]}_z=\begin{pmatrix}
	q_z&q^+_z\\
	q^-_z&q_z^\pm
	\end{pmatrix}
	\,,\]
we write
\[q_z^\pm=q_{0,z}^\pm+\hbar^{\frac 12}q_{1,z}^\pm+\hbar q_{2,z}^\pm+\hbar^{\frac 32}q_{3,z}^\pm\,,\]
with
\begin{equation}\label{eq.qj}\begin{split}
q_{0,z}^\pm&=z-\mu_1(\chi_1(\xi))\,,\\
q_{1,z}^\pm&=-i\varphi'(s)\mu_1(\chi_1(\cdot))'(\xi)\,,\\
q_{2,z}^\pm&=\kappa(\sigma)C_1(\xi,\mu)+C_2(\xi,z)\varphi'^2\,,
\end{split}\end{equation}
where
\[\begin{split}
C_1(\xi,\mu)&=\langle \left(c_\mu\partial_\tau+c_\mu (\chi_1(\xi)-\tau)\tau^2+2\tau c_\mu (\chi_1(\xi)-\tau)^2\right)v_\xi,v_\xi\rangle- \langle \tau  c_\mu'(\tau) \partial_\tau v_\xi,v_\xi\rangle\,,\\
C_2(\xi,z)&=1-4\langle(p_0-z)^{-1}\Pi^\perp(\chi_1(\xi)-\tau)v_\xi,(\chi_1(\xi)-\tau)v_\xi\rangle\,.
\end{split}\]	
and  when $z$ is real we have
$$
\Re q_{3,z}^\pm=0\,.
$$
Moreover, $q_z^-$, $q_z^+$, and $q_z^\pm$ are uniformly (with respect to $\mu$) bounded symbols.
\end{theorem}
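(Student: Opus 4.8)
The strategy is the standard Grushin-reduction construction for operator-valued symbols, adapted so that all compositions make sense via Notation \ref{not:granO} and the symbol class of Notation \ref{not.S}. First I would observe that, since $\mathscr{P}_{0,z}$ is pointwise invertible with inverse $\mathscr{Q}_{0,z}\in S(\R^2,\mathscr{L}(L^2(\R_+)\times\C,\Dom p_0\times\C))$ by Lemma \ref{lem.para0}, the operator $\Op(\mathscr{Q}_{0,z})$ is a genuine parametrix at leading order: by the symbolic (Moyal) composition rule for the Weyl quantization with operator-valued symbols (see \cite[Chapitre 2]{Keraval}), one has $\Op(\mathscr{Q}_{0,z})\Op(\mathscr{P}_z)=\Op(\mathscr{Q}_{0,z}\#\mathscr{P}_z)$, and the Moyal product expands as $\mathscr{Q}_{0,z}\mathscr{P}_z+\frac{\hbar}{2i}\{\mathscr{Q}_{0,z},\mathscr{P}_z\}+\mathscr{O}(\hbar^2)$ in the appropriate topology. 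The point of the definitions \eqref{eq.Qj} is precisely that $\mathscr{Q}^{[3]}_z$ is built so that the full Moyal product $\mathscr{Q}^{[3]}_z\#\mathscr{P}^{[3]}_z$ telescopes to $\mathrm{Id}$ up to order $\hbar^2$. So the plan is: (i) expand $\mathscr{P}_z=\mathscr{P}^{[3]}_z+\hbar^2\mathscr{R}_\hbar$ in powers of $\hbar^{1/2}$ using \eqref{eq.pj}; (ii) insert the Ansatz $\mathscr{Q}^{[3]}_z=\sum_{j=0}^3\hbar^{j/2}\mathscr{Q}_{j,z}$; (iii) collect the coefficient of each power $\hbar^{j/2}$ in the Moyal product and check that the equations \eqref{eq.Qj} make all of them vanish for $1\le j\le 3$, while the $\hbar^0$ term is $\mathrm{Id}$ by Lemma \ref{lem.para0}.

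Concretely: the $\hbar^0$ coefficient is $\mathscr{Q}_{0,z}\mathscr{P}_{0,z}=\mathrm{Id}$; the $\hbar^{1/2}$ coefficient is $\mathscr{Q}_{0,z}\mathscr{P}_1+\mathscr{Q}_{1,z}\mathscr{P}_{0,z}$, which vanishes exactly because $\mathscr{Q}_{1,z}=-\mathscr{Q}_{0,z}\mathscr{P}_1\mathscr{Q}_{0,z}$; the $\hbar^1$ coefficient is $\mathscr{Q}_{0,z}\mathscr{P}_2+\mathscr{Q}_{1,z}\mathscr{P}_1+\mathscr{Q}_{2,z}\mathscr{P}_{0,z}$ (there is no Poisson-bracket contribution yet because the leading Moyal correction is of order $\hbar$, hence order $\hbar^{3/2}$ once it hits the $\hbar^{1/2}$-symbol $\mathscr{P}_1$ or $\mathscr{Q}_{1,z}$), which vanishes by the formula for $\mathscr{Q}_{2,z}$; finally the $\hbar^{3/2}$ coefficient collects $\mathscr{Q}_{0,z}\mathscr{P}_3+\mathscr{Q}_{1,z}\mathscr{P}_2+\mathscr{Q}_{2,z}\mathscr{P}_1+\mathscr{Q}_{3,z}\mathscr{P}_{0,z}$ \emph{plus} the first Moyal correction $\frac{\hbar}{2i}(\{\mathscr{Q}_{0,z},\mathscr{P}_1\}+\{\mathscr{Q}_{1,z},\mathscr{P}_{0,z}\})$, and this is exactly killed by the definition of $\mathscr{Q}_{3,z}$ with its $\mathscr{C}_z$ term. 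Everything of order $\hbar^2$ and higher — the remainder $\mathscr{R}_\hbar$, the second Moyal correction, the tails of the first Moyal correction — is then lumped into the error. Because $r_\hbar=\ooo(\tau^4,\tau^2\partial_\tau)$ and $\mathscr{Q}_{0,z},\dots,\mathscr{Q}_{2,z}$ are symbols whose action gains at most a fixed polynomial weight in $\tau$ (as they are built from $(p_0-z)^{-1}\Pi^\perp$, which is bounded on $L^2$, and from multiplication by $v_\xi$, which decays like a Gaussian), the composition produces a remainder of the form $\hbar^2\ooo(\langle\tau\rangle^6)$; this is the content of \eqref{eq.approxparam}, and it is harmless because the normal Agmon estimates will later absorb any fixed power of $\tau$.

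For the explicit description of the $\pm$-entry I would simply read off the lower-right corner of each $\mathscr{Q}_{j,z}$ from \eqref{eq.Qj}, using the block structure $\mathscr{P}_j=\mathrm{diag}(p_j,0)$ and the explicit form of $\mathscr{Q}_{0,z}$ in Lemma \ref{lem.para0}. For $q^\pm_{0,z}$ this is immediate. For $q^\pm_{1,z}=-\langle p_1\, v_\xi,v_\xi\rangle + (\text{terms that vanish})$ one computes $\langle 2i(\chi_1(\xi)-\tau)\varphi' v_\xi,v_\xi\rangle=2i\varphi'\langle(\chi_1(\xi)-\tau)v_\xi,v_\xi\rangle$, and the Feynman–Hellmann identity $\mu_1(\chi_1(\cdot))'(\xi)=2\chi_1'(\xi)\langle(\tau-\chi_1(\xi))v_\xi,v_\xi\rangle$ (differentiating $p_0(\xi)v_\xi=\mu_1(\chi_1(\xi))v_\xi$ and pairing with $v_\xi$) converts this into $-i\varphi'\,\mu_1(\chi_1(\cdot))'(\xi)$; note the $\chi_1'$ factors are absorbed since the ``$\sigma$-derivative'' structure in $\mathscr{Q}_{1,z}$ only involves $\varphi'$. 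For $q^\pm_{2,z}$ one expands $-\mathscr{Q}_{0,z}\mathscr{P}_2\mathscr{Q}_{0,z}-\mathscr{Q}_{1,z}\mathscr{P}_1\mathscr{Q}_{0,z}$; the $\kappa$-dependent piece comes from the matrix elements $\langle(c_\mu\partial_\tau+\dots)v_\xi,v_\xi\rangle$ and integration by parts on the $\tau c_\mu'(\tau)\partial_\tau$ term, giving $C_1(\xi,\mu)$; the $\varphi'^2$-piece comes partly from the $-\varphi'^2$ term in $p_2$ and partly from the second-order contribution of $p_1$ through $(p_0-z)^{-1}\Pi^\perp$, assembling into $C_2(\xi,z)$. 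The reality statement $\Re q^\pm_{3,z}=0$ for real $z$ follows from $\Re p_3=0$ together with the fact that the other terms entering $\mathscr{Q}_{3,z}$ — products and Poisson brackets of $\mathscr{Q}_{0,z},\mathscr{Q}_{1,z},\mathscr{Q}_{2,z}$ with $\mathscr{P}_1,\mathscr{P}_{0,z}$ — are, for real $z$, built from real quadratic forms times the purely imaginary factor $i\varphi'$ (each bracket or composition involving exactly one factor of $p_1\propto i$), hence purely imaginary when evaluated on the $\pm$-entry; one checks the symmetry sign bookkeeping using that $p_0-z$, $v_\xi$, and $\langle\cdot,v_\xi\rangle$ are real for real $z$. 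Finally, uniform boundedness in $\mu$ of $q^-_z,q^+_z,q^\pm_z$ follows because all $\mu$-dependence enters through $c_\mu(\tau)=c(\mu\tau)$ and $c_\mu'(\tau)=\mu c'(\mu\tau)$ paired against the Gaussian-decaying $v_\xi$, and $\|c_\mu\|_\infty\le\|c\|_\infty$, $\mu\|\tau c'(\mu\tau)\|_{L^\infty}\le\|tc'(t)\|_{L^\infty}$ are bounded uniformly in $\mu$.

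The main obstacle is not any single computation but making the formal Moyal expansion rigorous in this operator-valued setting: one must verify that at each stage the operators being composed indeed map $\Dom(p_0)\times\C$ to itself (up to the controlled loss of powers of $\tau$), so that the symbolic calculus of \cite{Keraval} applies and the remainder genuinely has the stated form $\hbar^2\ooo(\langle\tau\rangle^6)$ rather than hiding an unbounded operator. This is exactly why the paper insists on the subtle point, recalled in the remark after \eqref{eq.pj}, that ``up to losing powers of $\hbar$ the involved operators are in $S(1)$'': the bookkeeping of which powers of $\tau$ and $\partial_\tau$ appear — and the check that they are all eventually tamed by the normal localization — is the delicate part, whereas the algebraic telescoping in \eqref{eq.Qj} is mechanical.
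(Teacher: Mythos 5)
Your overall approach is the same as the paper's: an order-by-order construction of $\mathscr{Q}^{[3]}_z$ from the Moyal composition of operator-valued symbols, followed by a $\tau$-weighted estimate on the remainder. The explicit computations of $q^\pm_{0,z}$, $q^\pm_{1,z}$ (via Feynman--Hellmann) and $q^\pm_{2,z}$ also match the paper. But two of your steps are shakier than the writing suggests.

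First, at the coefficient of $\hbar$. You write that ``there is no Poisson-bracket contribution yet because the leading Moyal correction is of order $\hbar$, hence order $\hbar^{3/2}$ once it hits the $\hbar^{1/2}$-symbol $\mathscr{P}_1$ or $\mathscr{Q}_{1,z}$.'' That only accounts for brackets involving a subprincipal symbol. The term $\tfrac{\hbar}{2i}\{\mathscr{Q}_{0,z},\mathscr{P}_{0,z}\}$ is genuinely of order $\hbar$ and must be written into the equation at that order. It happens to vanish here because $\mathscr{P}_{0,z}$ (equivalently $p_0$) does not depend on $s$; this is exactly the observation the paper makes at this stage, and without it the asserted formula $\mathscr{Q}_{2,z}=-\mathscr{Q}_{0,z}\mathscr{P}_2\mathscr{Q}_{0,z}-\mathscr{Q}_{1,z}\mathscr{P}_1\mathscr{Q}_{0,z}$ does not follow from the Moyal expansion as you have set it up.

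Second, and more seriously, the reality claim $\Re q^\pm_{3,z}=0$ for real $z$ is argued by counting factors of $i$: ``each bracket or composition involving exactly one factor of $p_1\propto i$, hence purely imaginary.'' This bookkeeping is not right as stated. The symbol $p_2$ contains $\kappa c_\mu\partial_\tau$, which is \emph{not} self-adjoint, so the ``real quadratic forms'' picture does not directly apply to $\mathscr{Q}_{1,z}\mathscr{P}_2\mathscr{Q}_{0,z}$. Moreover the Moyal correction carries a further factor $\tfrac{1}{2i}$, and the Poisson bracket $\{\mathscr{Q},\mathscr{P}\}$ of two self-adjoint symbols is not skew-self-adjoint (the product of two self-adjoint operators is not self-adjoint), so the naive parity-of-$i$ count does not settle the matter. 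The paper's proof instead establishes by a short but explicit calculation that $\mathscr{C}_z$ is skew-self-adjoint, by differentiating the identities $\mathscr{P}_{0,z}\mathscr{Q}_{0,z}=\mathrm{Id}$ and $\mathscr{Q}_{1,z}\mathscr{P}_{0,z}+\mathscr{Q}_{0,z}\mathscr{P}_1=0$ in $\xi$ and $s$ respectively, substituting, and taking adjoints using that $\mathscr{P}_{0,z},\mathscr{Q}_{0,z}$ are self-adjoint while $\mathscr{P}_1,\mathscr{Q}_{1,z}$ are skew-adjoint, and also that $\mathscr{P}_{0,z}$ is $s$-independent. You would need to supply an argument of that kind; the $i$-count you sketched does not by itself close this step.
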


\begin{remark}
From \cite[Prop. A.2]{FH06} (see also the definition of $C_1$ in \eqref{eq.theta0}), we have
\[C_1(\xi_0,0)=C_1\,,\]
and, from the exponential decay of $v_\xi$  and its derivative (in the $\tau$ variable) and the confinement in $\tau$ induced by the truncation $c_\mu$, we have, uniformly in $\xi$,	
\[C_1(\xi_0,\mu)=C_1+\mathscr{O}(\hbar^\infty)\,,\quad \langle \tau c_\mu'(\tau) \partial_\tau v_\xi,v_\xi\rangle =\mathscr{O}(\hbar^\infty)\,.\]
From \cite[Prop. A.3]{FH06}, we have
\[C_2(\xi_0,\Theta_0)=\frac{\mu''(\xi_0)}{2}\,.\]
\end{remark}
\begin{remark}
Let us recall here that
the bijectivity of  $\Op(p_\hbar)-z$ is related to the one of $\Op(q_z^\pm)$. In this case, we have, modulo some remainders,
\[(\Op(p_\hbar)-z)^{-1}\simeq\Op q_z-\Op q_z^-[\Op q_z^\pm]^{-1}\Op q^+_z\,.\]
\end{remark}

\begin{proof}
The proof is constructive. In order to see where the expressions \eqref{eq.Qj} are coming from, let us consider the product
\[\Op(\mathscr{Q}^{[3]}_{z}) \Op(\mathscr{P}^{[3]}_{z})\,,\]
and its the expansion  in half-powers of $\hbar$. The symbols $\mathscr{Q}_{j,z}$ are chosen so that \eqref{eq.approxparam} holds. Let us explain how these choices are made.

\subsubsection*{\textbf{Terms of order $\hbar^0$}}
The terms of order $1$ give
\[ \mathscr{Q}_{0,z} \mathscr{P}_{0,z}=\mathrm{Id}\,.\]
Now, one wants to cancel the other terms.

\subsubsection*{\textbf{Terms of order $\hbar^{\frac 12}$}}
Cancelling the terms of order $\hbar^{\frac 12}$, we find
\begin{equation}\label{eq.h12} \mathscr{Q}_{1,z}\mathscr{P}_{0,z}+\mathscr{Q}_{0,z}\mathscr{P}_{1}=0\,,\end{equation}
or, equivalently,
\[\mathscr{Q}_{1,z}=-\mathscr{Q}_{0,z}\mathscr{P}_{1,z}\mathscr{Q}_{0,z}\,.\]
Explicitly,
\[\mathscr{Q}_{1,z}=-\begin{pmatrix}
q_{0,z} p_1 q_{0,z}&q_{0,z}p_1 q^+_0\\
q_0^- p_1 q_{0,z}&q^{-}_{0}p_1q^+_0
\end{pmatrix}\,.
\]
Note that
\[q_{1,z}^\pm=-\langle p_1 v_\xi,v_\xi\rangle\,,\quad p_1=2i\varphi'(\chi_1(\xi)-\tau)\,.\]
By the Feynman-Hellmann theorem,
\[q_{1,z}^\pm=-2i\varphi'\langle (\chi_1(\xi)-\tau) v_\xi,v_\xi\rangle=-i\varphi'(s)\mu_1(\chi_1(\cdot))'(\xi)\,.\]

\subsubsection*{\textbf{Terms of order $\hbar^{1}$}}
Let us cancel the terms of order $\hbar$:
\[ \mathscr{Q}_{1,z}\mathscr{P}_{1}+ \frac{1}{2i}\{\mathscr{Q}_{0,z},\mathscr{P}_{0,z}\}
+\mathscr{Q}_{0,z}\mathscr{P}_{2}+\mathscr{Q}_{2,z}\mathscr{P}_{0,z}=0\,.\]
Since the principal symbol does not depend on  $s$,  the Poisson bracket is zero, and thus
\[ \mathscr{Q}_{1,z}\mathscr{P}_{1}+\mathscr{Q}_{0,z}\mathscr{P}_{2}+\mathscr{Q}_{2,z}\mathscr{P}_{0,z}=0\,.\]
It follows that
\[ \mathscr{Q}_{2,z}=-\mathscr{Q}_{1,z}\mathscr{P}_{1}\mathscr{Q}_{0,z}
-\mathscr{Q}_{0,z}\mathscr{P}_{2}\mathscr{Q}_{0,z}\,.\]
We have
\[\mathscr{Q}_{0,z}\mathscr{P}_{2}\mathscr{Q}_{0,z}=\begin{pmatrix}
q_{0,z} p_2 q_{0,z}&q_{0,z}p_2 q^+_0\\
q_0^- p_2 q_{0,z}&\langle p_2 v_\xi,v_\xi\rangle
\end{pmatrix}\,,\]
and   from the expression of $\mathscr{Q}_{1,z}$ above
\[\mathscr{Q}_{1,z}\mathscr{P}_{1}\mathscr{Q}_{0,z}=  -  \begin{pmatrix}
q_0 p_1 q_0 p_1  q_0  &q_0 p_1 q_0 p_1 q_0^+\\
q_0^-p_1 q_0 p_1 q_0&q_0^-p_1q_0p_1q_0^+
\end{pmatrix}\,.\]
In particular, we have
\[q_{2,z}^\pm=q_0^-p_1q_0p_1q_0^+-\langle p_2 v_\xi,v_\xi\rangle=\langle p_1(p_0-z)^{-1}\Pi^\perp p_1 v_\xi,v_\xi\rangle-\langle p_2 v_\xi,v_\xi\rangle\,.\]
With \eqref{eq.nj} and \eqref{eq.p}, we have
\[\begin{split}
\langle p_1(p_0-z)^{-1}\Pi^\perp p_1 v_\xi,v_\xi\rangle&=-4\varphi'^2\langle(p_0-z)^{-1}\Pi^\perp(\chi_1(\xi)-\tau)v_\xi,(\chi_1(\xi)-\tau)v_\xi\rangle\,,\\
\langle p_2 v_\xi,v_\xi\rangle&=-\varphi'^2+\kappa C_1(\xi,\mu)\,.
\end{split}
\]

\subsubsection*{\textbf{Terms of order $\hbar^{\frac 32}$}}
In the same way, we determine $\mathscr{Q}_{3,z}$ by solving
 \[\mathscr{Q}_{0,z}\mathscr{P}_3+\mathscr{Q}_{1,z}\mathscr{P}_2
+\mathscr{Q}_{2,z}\mathscr{P}_1+\mathscr{Q}_{3,z}\mathscr{P}_0
+\frac{1}{2i}\left(\{\mathscr{Q}_{0,z},\mathscr{P}_1\}+\{\mathscr{Q}_{1,z},\mathscr{P}_{0,z}\}\right)=0
\,.\]
which gives
\begin{equation} \label{defQ}
\mathscr{Q}_{3,z} =
-  \mathscr{Q}_{0,z}\mathscr{P}_{3}\mathscr{Q}_{0,z}
+\mathscr{Q}_{1,z}\mathscr{P}_{2}\mathscr{Q}_{0,z}
+\mathscr{Q}_{2,z}\mathscr{P}_{1}\mathscr{Q}_{0,z}  - \mathscr{C}_z
\end{equation}
which is  the last equality in \eqref{eq.Qj}. 

We show now that when $z$ is real,  $\Re (q_3^\pm)$ is purely imaginary.
For this we notice that
the first term  in parenthesis in \eqref{defQ} gives rise to a purely imaginary term in the right bottom of its matrix expression. Then, we show that $\mathscr{C}_z$ is actually skew-self-adjoint. First, since $\mathscr{P}_{0,z}$ does not depend on $s$,
\[
2i\mathscr{C}_z=
 \partial_\xi\mathscr{Q}_{0,z}\partial_s\mathscr{P}_1\mathscr{Q}_{0,z}
- \partial_s\mathscr{Q}_{1,z}\partial_{\xi}\mathscr{P}_{0,z}\mathscr{Q}_{0,z}\,.
\]
Then, recalling that $\mathscr{P}_{0,z}\mathscr{Q}_{0,z}=\mathrm{Id}$ and \eqref{eq.h12} and taking the derivatives of these formulas with respect to $\xi$ and $s$, respectively, we get
\[\begin{split}
2i\mathscr{C}_z&=
\partial_\xi\mathscr{Q}_{0,z}\partial_s\mathscr{P}_1\mathscr{Q}_{0,z}
+\partial_s\mathscr{Q}_{1,z}\mathscr{P}_{0,z}\partial_\xi\mathscr{Q}_{0,z}\\
&=-\partial_\xi\mathscr{Q}_{0,z}\mathscr{P}_{0,z}\partial_s\mathscr{Q}_{1,z}
+\partial_s\mathscr{Q}_{1,z}\mathscr{P}_{0,z}\partial_\xi\mathscr{Q}_{0,z}\\
&=(\mathscr{P}_{0,z}\partial_\xi\mathscr{Q}_{0,z})^*(\partial_s\mathscr{Q}_{1,z})^*
+\partial_s\mathscr{Q}_{1,z}\mathscr{P}_{0,z}\partial_\xi\mathscr{Q}_{0,z}\\
&=(\partial_s\mathscr{Q}_{1,z}\mathscr{P}_{0,z}\partial_\xi\mathscr{Q}_{0,z})^*
+\partial_s\mathscr{Q}_{1,z}\mathscr{P}_{0,z}\partial_\xi\mathscr{Q}_{0,z}\,,
\end{split}\]
where we used that $\mathscr{P}_{0,z}$, $\mathscr{Q}_{0,z}$ are self-adjoint and $\mathscr{P}_1$, $\mathscr{Q}_{1,z}$ are skew-self-adjoint.

\subsubsection*{\textbf{Remainders and order $\hbar^2$.}}

Therefore, with the definition of $\mathscr{Q}^{[3]}_{z}$, and composition of pseudo-differential operators, the operator symbol of $\Op(\mathscr{Q}^{[3]}_{z})\Op(\mathscr{P}^{[3]}_{z})$ coincides with $\mathrm{Id}$ modulo terms of orders at least $\mathscr{O}(\hbar^2)$. By the Calder\'on-Vaillancourt theorem, this remainder is a bounded operator, but the bound depends on the parameter $\mu$. To avoid this problem, we observe that, by Taylor expansion, the remainder is of order $\hbar^2$ in the worse topology of $L^2(\langle\tau\rangle^6\dd\tau \dd s)$. This power $6$ comes from the product of the terms of order $\hbar^{\frac 32}$. In the same way, we see that
\[\Op(\mathscr{Q}^{[3]}_{z})\left(\Op(\mathscr{P}_{z})-\Op(\mathscr{P}^{[3]}_{z})\right)\]
is again of order $\hbar^2$ for the topology $L^2(\langle\tau\rangle^6\dd\tau \dd s)$. Using that
\begin{equation}
\hbar^2 (p_0-z)^{-1} c_\mu \tau^2 \partial_\tau = \hbar^2 \ooo(\seq{\tau}^2),
\end{equation}
we can get rid of the derivatives in the remainder term involving $\tau^2 \partial_\tau$.

The fact that $q_z^-$, $q_z^+$, and $q_z^\pm$ are bounded comes from their explicit expressions and the fact that $v_\xi$ is exponentially decaying uniformly in $\xi$ with respect to $\tau$.

\end{proof}

\section{Tangential coercivity estimates}\label{sec.4}
We will use Theorem \ref{thm.Grushin} for $z\in\C$ such that
\[z=\Theta_0-C_1\kappa_{\max}\hbar+\mathscr{O}(\hbar^2)\,,\]
and assume that $\varphi$ is an appropriate sub-solution of the eikonal equation in the following sense 

\begin{assumption}\label{hyp.phi}
	Let $\varphi\geq 0$ be a Lipschitzian function such that, for all $M>0$ there exist $C,R>0$ such that
	\begin{enumerate}[\rm (i)]
		\item for all $\sigma\in\R$, $\mathfrak{v}(\sigma)-\frac{\mu_1''(\xi_0)}{2}\varphi'(\sigma)^2\geq 0$,
		\item for all $\sigma$ such that $|\sigma-s_r|\geq R\hbar^{\frac 12}$, $\mathfrak{v}(\sigma)-\frac{\mu_1''(\xi_0)}{2}\varphi'(\sigma)^2\geq M \hbar$.
		\end{enumerate}	
\end{assumption}
Note that $\varphi = 0$ is such a subsolution (much more useful solutions will be introduced later) and that for all $\sigma$ such that $|\sigma-s_r|\leq R\hbar^{\frac 12}$, $\mathfrak{v}(\sigma)-\frac{\mu_1''(\xi_0)}{2}\varphi'(\sigma)^2\leq C \hbar$.

\begin{theorem}\label{thm.coercivity}
Let $K>0$. Under Assumption \ref{hyp.phi}, there exist $\hbar_0, c, R_0>0$ such that, for all $R>R_0$, there exists $C_R>0$ such that the following holds. For all $\hbar\in(0,\hbar_0)$ and all $z\in\C$ such that $|z- \Theta_0+C_1\kappa_{\max}\hbar|\leq K\hbar^2$, and for all $\psi\in\Dom(\Op p_\hbar)$,
\[cR^2\hbar^2\|\psi\|\leq \|(\Op p_\hbar-z)\psi\|+C_R\hbar^2\|\chi_0(\hbar^{-\frac 12}R^{-1}(\sigma-s_r)) \psi\|+\hbar^2\|\tau^6\psi\|\,\]
where $\chi_0\in\mathscr{C}^\infty_0(\R)$ is $1$ in a neighborhood of $0$.
\end{theorem}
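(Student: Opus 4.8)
The plan is to exploit the approximate parametrix $\Op(\mathscr{Q}^{[3]}_z)$ constructed in Theorem~\ref{thm.Grushin} in order to transfer the invertibility problem for $\Op p_\hbar - z$ to the scalar effective symbol $q^\pm_z$. Writing $\mathscr{P}_z = \Op\mathscr{P}_z$ for the Grushin matrix with $\Op p_\hbar - z$ in the upper-left corner and the rank-one feet $\cdot v_\xi$, $\langle\cdot,v_\xi\rangle$, the identity \eqref{eq.approxparam} gives
\[
\Op(\mathscr{Q}^{[3]}_z)\,\Op(\mathscr{P}_z) = \mathrm{Id} + \hbar^2\ooo(\seq{\tau}^6)\,.
\]
Applied to a vector $(\psi,0)^T$ with $\psi\in\Dom(\Op p_\hbar)$, the first component reads $\psi = \Op(q_z)(\Op p_\hbar - z)\psi + \Op(q^+_z)\langle\psi,v_\sigma\rangle + \hbar^2\ooo(\seq{\tau}^6)\psi$, and the second gives $\langle\psi,v_\sigma\rangle = \Op(q^-_z)(\Op p_\hbar-z)\psi + \Op(q^\pm_z)\langle\psi,v_\sigma\rangle + \hbar^2\ooo(\seq{\tau}^6)\psi$ (here $\langle\psi,v_\sigma\rangle$ denotes the $L^2(\R_+)$-pairing, a function of $\sigma$). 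Since $\Op(q_z)$, $\Op(q^+_z)$, $\Op(q^-_z)$ are bounded uniformly in $\hbar$ and $\mu$, the first equation yields
\[
\|\psi\| \leq C\|(\Op p_\hbar - z)\psi\| + C\|\langle\psi,v_\sigma\rangle\|_{L^2(\R)} + C\hbar^2\|\seq{\tau}^6\psi\|\,,
\]
so everything reduces to a lower bound of the form $cR^2\hbar^2\|\langle\psi,v_\sigma\rangle\| \leq \|(\Op p_\hbar-z)\psi\| + C_R\hbar^2\|\chi_0(\hbar^{-1/2}R^{-1}(\sigma-s_r))\psi\| + \hbar^2\|\seq{\tau}^6\psi\|$ for the one-dimensional function $w:=\langle\psi,v_\sigma\rangle$.

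To obtain that, I would use the second Grushin equation, which exhibits $w$ as $\Op(q^\pm_z)w + (\text{controlled})$, hence requires a coercivity estimate for the scalar pseudo-differential operator $\Op(q^\pm_z)$. Expanding $q^\pm_z = (z-\mu_1(\chi_1(\xi))) - \hbar^{1/2} i\varphi'(\sigma)\,(\mu_1\circ\chi_1)'(\xi) + \hbar\big(\kappa(\sigma)C_1(\xi,\mu) + C_2(\xi,z)\varphi'^2\big) + \hbar^{3/2}q^\pm_{3,z}$, and inserting $z = \Theta_0 - C_1\kappa_{\max}\hbar + \ooo(\hbar^2)$, the real part of the symbol near $\xi = \xi_0$ is, to leading orders,
\[
\Theta_0 - \mu_1(\chi_1(\xi)) + \hbar\Big(-C_1\kappa_{\max} + \kappa(\sigma)C_1 + C_2(\xi_0,\Theta_0)\varphi'(\sigma)^2\Big) + \ldots
= \Theta_0-\mu_1(\xi) - \hbar\Big(\mathfrak v(\sigma) - \tfrac{\mu_1''(\xi_0)}{2}\varphi'^2\Big) + \ldots
\]
using $C_1(\xi_0,0)=C_1$, $C_2(\xi_0,\Theta_0)=\mu_1''(\xi_0)/2$ and $\mathfrak v = C_1(\kappa_{\max}-\kappa)$. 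Since $\mu_1$ has a non-degenerate minimum $\Theta_0$ at $\xi_0$ (so $\Theta_0-\mu_1(\xi) \leq -c(\xi-\xi_0)^2$ near $\xi_0$, and stays $\leq -c<0$ away from $\xi_0$ by \eqref{eq.mu1chi1} and the boundedness of $\mu_1\circ\chi_1$), and since by Assumption~\ref{hyp.phi}(ii) the term $\mathfrak v - \frac{\mu_1''(\xi_0)}{2}\varphi'^2 \geq M\hbar \gg R^2\hbar^2$ once $|\sigma-s_r|\geq R\hbar^{1/2}$ with $R$ large, the symbol $-\Re q^\pm_z$ is bounded below by $c\,\hbar R^2\hbar = cR^2\hbar^2$ (roughly $cR^2\hbar^2$, the worst case being $(\xi-\xi_0)^2 \sim R^2\hbar$ balanced against the potential term) outside the region $|\sigma-s_r| \leq R\hbar^{1/2}$, $|\xi-\xi_0|\leq R\hbar^{1/2}$. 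The $\hbar^{1/2}$ subprincipal term $-i\varphi'(\mu_1\circ\chi_1)'$ is purely imaginary and vanishes at $\xi = \xi_0$ (where $(\mu_1\circ\chi_1)'(\xi_0)=0$), and $\Re q^\pm_{3,z}=0$, so they do not spoil the sign of the real part. A Gårding-type / sharp-Gårding argument (or a direct Fefferman–Phong inequality for the symbol $-\Re q^\pm_z + cR^2\hbar^2$) then gives
\[
\mathrm{Re}\,\langle -\Op(q^\pm_z) w, w\rangle \geq cR^2\hbar^2\|w\|^2 - C_R\hbar^2\|\mathbf 1_{|\sigma-s_r|\leq R\hbar^{1/2}} w\|^2\,,
\]
and the localized term is absorbed by reintroducing a smooth cutoff $\chi_0(\hbar^{-1/2}R^{-1}(\sigma-s_r))$ and using that on this set $w = \langle\psi,v_\sigma\rangle$ is controlled by $\|\chi_0(\cdots)\psi\|$ (the $v_\sigma$ are $L^2$-normalized, uniformly decaying).

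Combining: from the second Grushin equation, $cR^2\hbar^2\|w\| \leq \|(\mathrm{Id}-\Op q^\pm_z)w\| \leq \|(\Op p_\hbar-z)\psi\| + C_R\hbar^2\|\chi_0(\hbar^{-1/2}R^{-1}(\sigma-s_r))\psi\| + \hbar^2\|\seq{\tau}^6\psi\|$, which then feeds into the bound on $\|\psi\|$ above, yielding the claimed inequality (after relabelling constants, and noting $\|\seq{\tau}^6\psi\|$ can be replaced by $\|\tau^6\psi\|$ up to harmless lower-order terms absorbed in the left side). The main obstacle I anticipate is making the Gårding/Fefferman–Phong step genuinely uniform in the parameter $\mu = \hbar^{1/2+2\eta}$ and in $\hbar$ simultaneously: the symbols $q^\pm_z$ live in a semiclassical class at scale $\hbar^{1/2}$ (the natural width of the well $|\xi-\xi_0|\sim\hbar^{1/2}$ and $|\sigma-s_r|\sim\hbar^{1/2}$ being the same), so one must rescale $\xi \to \xi_0 + \hbar^{1/2}\eta$, $\sigma \to s_r + \hbar^{1/2}\varsigma$ to see a non-degenerate limiting harmonic-oscillator-type symbol and only then apply Gårding; one must also keep careful track that the $\hbar^2\ooo(\seq{\tau}^6)$ remainder from the parametrix, and the $r_\hbar = \ooo(\tau^4,\tau^2\partial_\tau)$ contributions, do not degrade the $\hbar^2$ threshold — this is exactly why the $\tau$-weights are carried along and will be killed later by the normal Agmon estimates. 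Everything else is bookkeeping with the explicit symbols from Theorem~\ref{thm.Grushin}.
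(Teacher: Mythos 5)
Your overall strategy matches the paper's exactly: use the two-component Grushin equation from Theorem~\ref{thm.Grushin} to reduce the coercivity of $\Op p_\hbar - z$ to a coercivity estimate for the effective scalar operator $\Op q^\pm_z$, prove the latter via a Fefferman--Phong/G\aa rding argument on the real part of the symbol (which is precisely Proposition \ref{prop.coercivity-1D} in the paper), and then feed the result back, controlling the $\tau$-weights and the localization cutoff. The symbol computation $\Re q^\pm_z = \Theta_0-\mu_1(\chi_1(\xi)) - \hbar\big(\mathfrak v(\sigma)-C_2(\xi,\Theta_0)\varphi'^2\big)+\ldots$, the use of Assumption~\ref{hyp.phi}(ii) for $|\sigma-s_r|\geq R\hbar^{1/2}$, the concern about uniformity in $\mu$, and the remark that $\|\langle\tau\rangle^6\psi\|$ reduces to $\|\tau^6\psi\|$ up to absorbable terms are all on point.

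However, there is an algebraic slip that, as written, breaks the final chain of inequalities. Applying $\Op\mathscr{Q}^{[3]}_z\Op\mathscr{P}_z = \mathrm{Id}+\hbar^2\mathscr{O}(\seq{\tau}^6)$ to $(\psi,0)^T$ gives for the second component
$\Op q^-_z(\Op p_\hbar - z)\psi + \Op q^\pm_z B\psi = \hbar^2\mathscr{O}(\seq{\tau}^6)\psi$
(the right-hand side is $0$ plus remainder, \emph{not} $B\psi$), so the usable consequence is $\|\Op q^\pm_z (B\psi)\|\leq C\|(\Op p_\hbar - z)\psi\| + C\hbar^2\|\seq{\tau}^6\psi\|$. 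Your version has a spurious $\langle\psi,v_\sigma\rangle$ on the left, which produces ``$cR^2\hbar^2\|w\|\leq\|(\mathrm{Id}-\Op q^\pm_z)w\|$''; but your G\aa rding analysis is of the symbol $-\Re q^\pm_z$, not of $1-\Re q^\pm_z$, and for $\mathrm{Id}-\Op q^\pm_z$ the estimate $\|w\|\leq C\|(\mathrm{Id}-\Op q^\pm_z)w\|$ is essentially trivial (since $\Re q^\pm_z = \mathscr{O}(\hbar)$) and loses the crucial $R^2\hbar^2$. The correct inequality is the one from Proposition~\ref{prop.coercivity-1D}, $cR^2\hbar^2\|w\|\leq\|\Op q^\pm_z w\|+C_R\hbar^2\|\chi_0(\hbar^{-1/2}R^{-1}(\sigma-s_r))w\|$, which you then combine with the bound on $\|\Op q^\pm_z(B\psi)\|$. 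Finally, note that $B=\Op(\langle\cdot,v_\xi\rangle)$ is a genuine pseudo-differential operator, not the pointwise pairing $\langle\psi(\sigma,\cdot),v_\sigma\rangle$; to replace $\|\chi_0 B\psi\|$ by $\|\chi_0\psi\|$ you need the commutator estimate $[B,\chi_0(\hbar^{-1/2}R^{-1}(\sigma-s_r))]=\mathscr{O}(\hbar^{1/2})$, which costs an absorbable $C\hbar^{5/2}\|\psi\|$ rather than being free.
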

\begin{remark}
The domain of $\Op p_\hbar$ is
\[\Dom(\Op p_\hbar)=L^2(\R_\sigma, B^2_N(\R_{+,\tau}))\,,\]
with $B^2_N(\R_+)=\{u\in H^2(\R_+) : \tau^2u\in L^2(\R_+)\,, u'(0)=0\}$. In Theorem \ref{thm.coercivity}, we use the convention that if $\tau^6\psi$ does not belong to $L^2(\R^2_+)$, we have $\|\tau^6\psi\|=+\infty$ in which case the inequality is true. The same kind of convention will be used in Section \ref{sec.5}. Anyway, in the proofs, $\psi$ can be assumed to belong to the Schwartz class $\mathcal{S}(\overline{\R^2_+})$. 
\end{remark}

\subsection{From the effective operator...}

\begin{proposition}\label{prop.coercivity-1D}
	Let $K>0$. There exist $h_0, C>0$ such that, for all $z\in\C$ such that $|z- \Theta_0+C_1\kappa_{\max}\hbar|\leq K\hbar^2$,	
	\[\hbar\int_{\R}\left(\mathfrak{v}(\sigma)-\frac{\mu_1''(\xi_0)}{2}\varphi'^2(\sigma)\right)|\psi|^2\dd\sigma-C\hbar^2\|\psi\|^2\leq -\Re\langle \Op q^\pm_z\psi, \psi\rangle\,.\]
	In particular, for some $c>0$ and all $R>0$, there exists $C_R>0$ such that
		\[cR^2\hbar^2\|\psi\|\leq\| \Op q^\pm_z\psi\|+C_R\hbar^2\|\chi_0(\hbar^{-\frac 12}R^{-1}(\sigma-s_r)) \psi\|\,.\]
\end{proposition}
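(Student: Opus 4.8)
The plan is to establish the quadratic-form lower bound for $-\Re\langle \Op q^\pm_z\psi,\psi\rangle$ first, and then deduce the coercivity estimate from it by a Cauchy--Schwarz argument together with Assumption \ref{hyp.phi}. To get the quadratic form bound, I would expand $q^\pm_z = q_{0,z}^\pm + \hbar^{\frac12}q_{1,z}^\pm + \hbar q_{2,z}^\pm + \hbar^{\frac32}q_{3,z}^\pm$ using the explicit formulas \eqref{eq.qj} from Theorem \ref{thm.Grushin}, and compute $-\Re\langle \Op q^\pm_z\psi,\psi\rangle$ term by term. The term $q_{0,z}^\pm = z - \mu_1(\chi_1(\xi))$ contributes, modulo $\mathscr{O}(\hbar^2)$, the symbol $\Theta_0 - C_1\kappa_{\max}\hbar - \mu_1(\chi_1(\xi))$; writing $\mu_1(\chi_1(\xi)) = \Theta_0 + \frac{\mu_1''(\xi_0)}{2}(\xi-\xi_0)^2 + \mathscr{O}((\xi-\xi_0)^3)$ near $\xi_0$, and using $\hbar D_\sigma$ for the symbol $\xi$, this is where the $-\frac{\mu_1''(\xi_0)}{2}\varphi'^2$ and the electric well term $\mathfrak v$ must appear. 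The cross term $q_{1,z}^\pm$ is purely imaginary-looking: $-i\varphi'(\sigma)(\mu_1\circ\chi_1)'(\xi)$, and upon symmetrizing ($\Re$ of the associated operator) its contribution produces exactly (after a Taylor expansion of $(\mu_1\circ\chi_1)'$ near $\xi_0$ and an integration by parts) the term $-\hbar \frac{\mu_1''(\xi_0)}{2}\varphi'^2$ together with controlled remainders; one must be careful that the leading symmetrized part is $\mu_1''(\xi_0)\hbar^{\frac12}\varphi'(\sigma)\cdot\hbar D_\sigma$-type, whose real part after conjugation balances against the $(\xi-\xi_0)^2$ term from $q_{0,z}^\pm$. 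The term $\hbar q_{2,z}^\pm = \hbar(\kappa(\sigma)C_1(\xi,\mu) + C_2(\xi,z)\varphi'^2)$ evaluated near $\xi_0$ with $C_1(\xi_0,\mu)=C_1+\mathscr{O}(\hbar^\infty)$ and $C_2(\xi_0,\Theta_0)=\frac{\mu_1''(\xi_0)}{2}$ yields $\hbar(\kappa(\sigma)C_1 + \frac{\mu_1''(\xi_0)}{2}\varphi'^2)$, so that combining with the $-\hbar C_1\kappa_{\max}$ from $q_{0,z}^\pm$ gives $-\hbar C_1(\kappa_{\max}-\kappa(\sigma)) + \hbar\frac{\mu_1''(\xi_0)}{2}\varphi'^2 = -\hbar\mathfrak v(\sigma) + \hbar\frac{\mu_1''(\xi_0)}{2}\varphi'^2$; taking $-\Re$ flips the sign, producing $+\hbar\mathfrak v - \hbar\frac{\mu_1''(\xi_0)}{2}\varphi'^2$ as desired. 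Finally $\hbar^{\frac32}q_{3,z}^\pm$ has $\Re q_{3,z}^\pm = 0$ when $z$ is real, and the $\mathscr{O}(\hbar^2)$ dependence of $z$ on $\Theta_0-C_1\kappa_{\max}\hbar$, together with the Weyl-quantization error terms and the $\mathscr{O}((\xi-\xi_0)^3)$ Taylor remainders (controlled near $\xi_0$ because $\chi_1$ is bounded), all get absorbed into the $-C\hbar^2\|\psi\|^2$ term.

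For the second (coercivity) statement, I would start from the form inequality just proved and split $\R$ into the region $|\sigma - s_r| \geq R\hbar^{\frac12}$ and its complement. On the far region, Assumption \ref{hyp.phi}(ii) gives $\mathfrak v - \frac{\mu_1''(\xi_0)}{2}\varphi'^2 \geq M\hbar$, so the left side controls $M\hbar^2 \int_{|\sigma-s_r|\geq R\hbar^{1/2}}|\psi|^2$; on the near region we use a cutoff $\chi_0(\hbar^{-1/2}R^{-1}(\sigma-s_r))$ equal to $1$ there, and the crude bound $\mathfrak v - \frac{\mu_1''(\xi_0)}{2}\varphi'^2 \geq 0$ (Assumption \ref{hyp.phi}(i)) means the near-region integral is non-negative, so it can simply be dropped on the left but must be reinserted on the right as $C_R\hbar^2\|\chi_0(\cdots)\psi\|^2$. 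Choosing $M$ large (equivalently taking $R_0$ large and then any $R > R_0$ so that $M \geq 2cR^2$ for the target constant), rearranging gives $cR^2\hbar^2\|\psi\|^2 \leq -\Re\langle\Op q^\pm_z\psi,\psi\rangle + C_R\hbar^2\|\chi_0(\cdots)\psi\|^2 + C\hbar^2\|\psi\|^2$; absorbing the last term (again by enlarging $R$) and bounding $-\Re\langle\Op q^\pm_z\psi,\psi\rangle \leq \|\Op q^\pm_z\psi\|\,\|\psi\|$ by Cauchy--Schwarz, then dividing by $\|\psi\|$ and taking square roots on the cutoff term (using $\|\chi_0(\cdots)\psi\|^2 \leq \|\chi_0(\cdots)\psi\|\,\|\psi\|$), yields the stated inequality $cR^2\hbar^2\|\psi\| \leq \|\Op q^\pm_z\psi\| + C_R\hbar^2\|\chi_0(\hbar^{-1/2}R^{-1}(\sigma-s_r))\psi\|$ after relabeling constants.

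The main obstacle I anticipate is the careful bookkeeping in the quadratic-form expansion: ensuring that the symmetrization of the first-order (in $\hbar^{\frac12}$) term $q_{1,z}^\pm$ interacts correctly with the quadratic-in-$(\xi-\xi_0)$ part of $q_{0,z}^\pm$ so that the combination reproduces exactly $-\frac{\mu_1''(\xi_0)}{2}\varphi'^2$ and not some other multiple, and that the various Weyl-calculus commutator corrections (of size $\hbar^2$) and the non-quadratic Taylor remainders in $\mu_1\circ\chi_1$ are genuinely $\mathscr{O}(\hbar^2)$ in operator norm on $L^2$ — this uses that $\chi_1$ is bounded (so all symbols and their derivatives are in $S(1)$, uniformly), hence the Calderón--Vaillancourt theorem applies and the errors are controlled independently of $\mu$. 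A secondary technical point is that the sharp Gårding / positivity arguments must be applied to the $\hbar$-pseudodifferential operator with symbol $\mathfrak v(\sigma) - \frac{\mu_1''(\xi_0)}{2}\varphi'(\sigma)^2 \geq 0$, which is only Lipschitz in $\sigma$ (since $\varphi$ is only Lipschitzian), so one invokes a version of Gårding valid for non-smooth symbols, or regularizes $\varphi'$ first at scale $\hbar^{1/2}$ and absorbs the regularization error into $C\hbar^2\|\psi\|^2$; this is routine but needs to be mentioned.
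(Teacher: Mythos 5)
Your overall plan --- expand $q_z^\pm$ via \eqref{eq.qj}, evaluate its contribution to $-\Re\langle\Op q_z^\pm\psi,\psi\rangle$, absorb errors into $-C\hbar^2\|\psi\|^2$, and then derive the coercivity estimate by localizing around $s_r$ using Assumption~\ref{hyp.phi} and Cauchy--Schwarz --- matches the paper's, and your processing of $q_{0,z}^\pm$, of $q_{2,z}^\pm$ via $C_1(\xi_0,\mu)=C_1+\mathscr{O}(\hbar^\infty)$ and $C_2(\xi_0,\Theta_0)=\mu_1''(\xi_0)/2$, and of $\Re q_{3,z}^\pm=0$ are in line with the paper; the Fefferman--Phong / sharp-G{\aa}rding remark about quantizing a Lipschitz symbol is also well flagged.

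There is, however, a genuine error in your treatment of $q_{1,z}^\pm$. You claim that the symbol $q_{1,z}^\pm=-i\varphi'(\sigma)(\mu_1\circ\chi_1)'(\xi)$, after ``symmetrizing,'' a Taylor expansion, an ``integration by parts,'' and ``balancing against the $(\xi-\xi_0)^2$ term from $q_{0,z}^\pm$,'' contributes $-\hbar\frac{\mu_1''(\xi_0)}{2}\varphi'^2$. This is wrong. Since $\varphi$ is real-valued and $(\mu_1\circ\chi_1)'$ is real-valued, the symbol $q_{1,z}^\pm$ is \emph{purely imaginary}, so for the Weyl quantization $(\Op q_{1,z}^\pm)^*=\Op\overline{q_{1,z}^\pm}=-\Op q_{1,z}^\pm$ and therefore $\Re\langle\Op q_{1,z}^\pm\,\psi,\psi\rangle=0$ \emph{exactly} --- no remainder, no balancing. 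The paper's proof rests on precisely this (together with $\Re q_{3,z}^\pm=0$ for real $z$): all odd half-powers of $\hbar$ in $q_z^\pm$ drop out of $-\Re q_z^\pm$ for free, so the scalar symbol inequality $-\Re q_z^\pm\geq\hbar\bigl(\mathfrak{v}(\sigma)-\frac{\mu_1''(\xi_0)}{2}\varphi'^2\bigr)+r_\hbar$ is obtained purely from $q_{0,z}^\pm$ and $q_{2,z}^\pm$, with the $\varphi'^2$ term originating \emph{entirely} from $C_2(\xi,z)\varphi'^2$ inside $q_{2,z}^\pm$. Had you actually carried out the mechanism you sketch, you would obtain a second, spurious, $\varphi'^2$ contribution and end up with the wrong constant $\mu_1''(\xi_0)\varphi'^2$ instead of $\frac{\mu_1''(\xi_0)}{2}\varphi'^2$; you implicitly avoid the double-count in your final line, which saves the formula but leaves the argument internally inconsistent. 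The confusion most likely comes from the bulk operator $\mathscr{N}^\varphi_\hbar$, where $-\varphi'^2$ does arise by squaring $(-i\hbar\partial_\sigma-\tau+i\hbar^{1/2}\varphi'+\cdots)$; but after the Grushin reduction that algebra is already encoded inside $C_2$ (through $p_1(p_0-z)^{-1}\Pi^\perp p_1$ and $p_2$), and the residual $q_{1,z}^\pm$ is a skew-adjoint first-order correction whose quadratic form has identically vanishing real part.
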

\begin{proof}
Using the assumption on $z$ and \eqref{eq.qj}, we have
\[-\Re q^\pm_z=\mu_1(\chi_1(\xi))-\Theta_0+\hbar\left(-\kappa(\sigma)C_1(\xi,\mu)+C_1\kappa_{\max}-C_2(\xi,\Theta_0)\varphi'^2\right)+\mathscr{O}(\hbar^2)\,,\]	
and also
 \[-\Re q^\pm_z=\mu_1(\chi_1(\xi))-\Theta_0+\hbar\left(-\kappa(\sigma)C_1(\xi,0)+C_1(\xi_0,0)\kappa_{\max}-C_2(\xi,\Theta_0)\varphi'^2\right)+\mathscr{O}(\hbar^2)\,.\]	
We write
 \begin{equation}\label{eq.lb-qpm}
 -\Re q^\pm_z\geq \hbar\left(\mathfrak{v}(\sigma)-C_2(\xi_0,\Theta_0)\varphi'^2(\sigma)\right)+r_\hbar\,,
 \end{equation}
 where
 \[r_\hbar=\mu_1(\chi_1(\xi))-\Theta_0+\hbar s_\hbar\,,\]
 with
 \[|s_\hbar|\leq C\min(1,|\xi-\xi_0|)\,.\]
 Since
 \[\mu_1(\chi_1(\xi))-\Theta_0\geq c\min\left((\xi-\xi_0)^2,1\right)\,,\]
we get, from the Young inequality,
 \begin{equation}\label{eq.y}
 r_\hbar\geq -C\hbar^2\,.
 \end{equation}
 Using \eqref{eq.lb-qpm}, \eqref{eq.y}, and the standard Fefferman-Phong inequality, the result follows.

\end{proof}

\subsection{... to the bidimensional operator}
We can now establish Theorem \ref{thm.coercivity}. Let us recall the relation between $\Op p_\hbar$ and $\Op q_z^\pm$. We have by Theorem \ref{thm.Grushin}
\[
\begin{pmatrix}
\Op q_z&\Op q^+_z\\
\Op q^-_z&\Op q_z^\pm
\end{pmatrix}\begin{pmatrix}
\Op p_\hbar-z&B^*\\
B&0
\end{pmatrix}=\mathrm{Id}
+\mathscr{O}_{L^2(\R\times\R_+,\langle\tau\rangle^6\dd\sigma\dd\tau)\to L^2(\R\times\R_+)}(\hbar^2)\,,\]
where
$ B=\Op (\langle\cdot,v_\xi\rangle)\,$.
In particular,
\begin{equation}\label{eq.gr}
\begin{split}
\Op q_z(\Op p_\hbar-z)+\Op q_z^+ B&=\mathrm{Id}+\mathscr{O}_{L^2(\R\times\R_+,\langle\tau\rangle^6\dd\sigma\dd\tau)\to L^2(\R_+)}(\hbar^2)\,\\
\Op q_z^-(\Op p_\hbar-z)+\Op q_z^\pm B&=\mathscr{O}_{L^2(\R\times\R_+,\langle\tau\rangle^6\dd\sigma\dd\tau)\to L^2(\R\times\R_+)}(\hbar^2)\,.
\end{split}
\end{equation}
Thus,
\[\|\psi\|\leq C\|(\Op p_\hbar-z)\psi\|+C\|B\psi\|+C\hbar^2\|\langle\tau\rangle^6\psi\|\,,\]
and
\[\|\Op q_z^\pm (B\psi)\|\leq C\|(\Op p_\hbar-z)\psi\|+C\hbar^2\|\langle\tau\rangle^6\psi\|\,.\]
From Proposition \ref{prop.coercivity-1D}, we deduce
\[cR^2\hbar^2\|B\psi\|\leq C\|(\Op p_\hbar-z)\psi\|+C_R\hbar^2\|\chi_0(\hbar^{-\frac 12}R^{-1}(\sigma-s_r)) B\psi\|\,,\]
and then, choosing $R$ large enough,
\[\tilde cR^2\hbar^2\|\psi\|\leq C\|(\Op p_\hbar-z)\psi\|+C_R\hbar^2\|\chi_0(\hbar^{-\frac 12}R^{-1}(\sigma-s_r)) B\psi\|+C\hbar^2\|\tau^6\psi\|\,.\]
Moreover, by rescaling and using the fact that the symbol of $B$ only depends on $\xi$, we have $[B,\chi_0(\hbar^{-\frac 12}R^{-1}(\sigma-s_r))]=\mathscr{O}(\hbar^{\frac 12})$, we get
\[cR^2\hbar^2\|\psi\|\leq C\|(\Op p_\hbar-z)\psi\|+\hbar^2\|B\chi_0(\hbar^{-\frac 12}R^{-1}(\sigma-s_r))\psi\|+C\hbar^{\frac 52}\|\psi\|+C\hbar^2\|\tau^6\psi\|\,,\]
and the conclusion follows.

\section{Removing the frequency cutoff}\label{sec.5}
Let us now replace in Theorem \ref{thm.coercivity} the \enquote{truncated} operator $\Op p_\hbar$ (defined in Section \ref{sec.cutoff}) by the operator without frequency cutoff $\mathscr{N}_\hbar^{\varphi}$. This can be done up to convenient additional remainders.
\begin{theorem}\label{thm.coercivity-2D}
	Under Assumption \ref{hyp.phi}, there exist $c, \hbar_0>0$ such that for all $\hbar\in(0,\hbar_0)$ and all $\psi\in\Dom(\mathscr{N}_\hbar^{\varphi})$,
\begin{equation*}
c\hbar^2\|\psi\|\leq \|\langle\tau\rangle^6(\mathscr{N}^{\varphi}_\hbar-z)\psi\|
+\hbar^2\|\chi_0(\hbar^{-\frac 12}R^{-1}(\sigma-s_r)) \psi\|\,,
	\end{equation*}
	and
	\begin{equation*}
	c\hbar^{2}\|\hbar^2 D_\sigma^2\psi\|\leq \|\langle\tau\rangle^6(\mathscr{N}^{\varphi}_\hbar-z)\psi\|
	+\hbar^2\|\chi_0(\hbar^{-\frac 12}R^{-1}(\sigma-s_r)) \psi\|\,.
		\end{equation*}
\end{theorem}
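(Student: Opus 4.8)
The goal is to upgrade the coercivity estimate of Theorem~\ref{thm.coercivity}, which holds for the microlocally truncated operator $\Op p_\hbar$, to the full operator $\mathscr{N}^\varphi_\hbar$. The strategy is to show that the contribution of the region $|\xi|$ large (where $\Op p_\hbar$ and $\mathscr{N}^\varphi_\hbar$ differ, because there $\chi_1(\xi)$ has been flattened) is negligible, thanks to the \emph{normal} Agmon estimates and the large-$\xi$ behaviour of the de Gennes function $\mu_1$. Concretely, I would introduce a smooth frequency cutoff $\theta(\xi)$ equal to $1$ on a neighbourhood of $\xi_0$ contained in $[\xi_0/2,3\xi_0/2]$ and supported where $\chi_1(\xi)=\xi$, and split $\psi=\Op(\theta)\psi+\Op(1-\theta)\psi=:\psi_{\rm low}+\psi_{\rm high}$. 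On the support of $\theta$ the symbols of $\mathscr{N}^\varphi_\hbar$ and $\Op p_\hbar$ agree (up to the already-controlled $\ooo(\langle\tau\rangle^k)$ remainders of \eqref{eq.nj}–\eqref{eq.pj}), so Theorem~\ref{thm.coercivity} applied to $\psi_{\rm low}$ gives the desired lower bound for $\psi_{\rm low}$ in terms of $\|\langle\tau\rangle^6(\mathscr{N}^\varphi_\hbar-z)\psi\|$, the localized term, and commutator errors of size $\hbar^{\infty}\|\psi\|$ (the commutator $[\mathscr{N}^\varphi_\hbar,\Op\theta]$ is a classical pseudodifferential operator in the $\sigma$-variable, bounded because $\theta$ only depends on $\xi$).

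\textbf{Handling the high-frequency part.} For $\psi_{\rm high}$ the point is that $\mathscr{N}^\varphi_\hbar$ is already elliptic there in a quantitative way: its principal operator symbol $-\partial_\tau^2+(\xi-\tau)^2$ has ground-state energy $\mu_1(\xi)$, and since $\mu_1(\xi)\to 1$ as $\xi\to-\infty$ and $\mu_1(\xi)\to+\infty$ as $\xi\to+\infty$ (while $z=\Theta_0+\ooo(\hbar)<1$), one has $\mu_1(\xi)-\Re z\geq c_0>0$ on the support of $1-\theta$. Thus, modulo the projection onto the transverse ground state, the operator is invertible with a uniform bound; the ground-state direction on $\operatorname{supp}(1-\theta)$ is controlled by the one-dimensional estimate of Proposition~\ref{prop.coercivity-1D} (valid there since $\mu_1(\chi_1(\xi))-\Theta_0$ is bounded below). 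More simply, I expect to get $\|\psi_{\rm high}\|\leq C\|(\mathscr{N}^\varphi_\hbar-z)\psi_{\rm high}\|$ directly from the lower bound on $\mu_1(\xi)-\Re z$ after a Grushin-type splitting (reusing Lemma~\ref{lem.para0} with $p_0$ replaced by $n_0$, whose domain is still independent of $\xi$), plus normal Agmon control of the terms in $\tilde r_\hbar$ which carry powers of $\tau$. Collecting $\psi_{\rm low}$ and $\psi_{\rm high}$, and absorbing cross terms $\|(\mathscr{N}^\varphi_\hbar-z)\Op\theta\psi\|\leq C\|\langle\tau\rangle^6(\mathscr{N}^\varphi_\hbar-z)\psi\|+\hbar^\infty\|\psi\|$, yields the first inequality.

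\textbf{The derivative estimate.} For the second inequality, bounding $\hbar^2 D_\sigma^2\psi$, the idea is that away from the turning point the operator also controls two tangential derivatives: on the low-frequency side, $\hbar D_\sigma$ is comparable to $\xi-\xi_0$ localized near $\xi_0$, which is tame, and the effective operator $\Op q^\pm_z$ behaves like $\tfrac{\mu_1''(\xi_0)}{2}(\hbar^2 D_\sigma^2+\hbar(\mathfrak v-\tfrac{\mu_1''(\xi_0)}2\varphi'^2))$ up to lower order — so elliptic regularity for this effective Schrödinger-type operator gives $\hbar^2\|\hbar^2 D_\sigma^2\psi_{\rm low}\|\lesssim \|\Op q_z^\pm B\psi_{\rm low}\|+\ldots$, and one feeds in the Grushin relations \eqref{eq.gr} exactly as in the proof of Theorem~\ref{thm.coercivity}. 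On the high-frequency side, $\hbar D_\sigma$ acting on $\psi_{\rm high}$ is again controlled by $\mu_1(\xi)$-ellipticity since there $(\xi-\tau)^2$ dominates. The main obstacle, as in the previous section, is keeping the constants uniform in the parameter $\mu=\hbar^{1/2+2\eta}$: the remainder terms $\tilde r_\hbar$ and the truncation $c_\mu$ are only bounded after paying powers of $\hbar$ and of $\langle\tau\rangle$, so one must repeatedly trade growth in $\tau$ against the normal decay furnished by the transverse harmonic-oscillator structure (the resolvent identity $\hbar^2(p_0-z)^{-1}c_\mu\tau^2\partial_\tau=\hbar^2\ooo(\langle\tau\rangle^2)$ used at the end of the proof of Theorem~\ref{thm.Grushin} is the prototype), which is why the weight $\langle\tau\rangle^6$ on the right-hand side is unavoidable.
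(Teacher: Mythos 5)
Your overall plan---split $\psi$ into low and high frequency pieces via a cutoff $\theta(\xi)$ equal to $1$ near $\xi_0$, apply Theorem~\ref{thm.coercivity} to the low part where $\Op p_\hbar$ and $\mathscr{N}^\varphi_\hbar$ coincide, and exploit the transverse ellipticity $\mu_1(\xi)-\Re z\geq c_1>0$ away from $\xi_0$ for the high part---is the right one. It is dual to the paper's, which keeps $\psi$ intact and splits the \emph{operators}, writing
$\|(\mathscr{N}^\varphi_\hbar-z)\psi\|\geq\|(\Op p_\hbar-z)\psi\|-\|(\mathscr{N}^\varphi_\hbar-\Op p_\hbar)\psi\|$
and controlling the difference microlocally via Proposition~\ref{prop.controlxi2}; the ingredients you invoke (ellipticity away from $\xi_0$, transverse Agmon control of the $\tau$-powers, passage from $\hbar D_\sigma-\tau$ to $\hbar D_\sigma$) are the same.

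The genuine gap is the claim that $[\mathscr{N}^\varphi_\hbar,\Op\theta]$ contributes errors of size $\hbar^\infty\|\psi\|$. Since $n_\hbar$ depends on $\sigma$ (through $\varphi'$ and $\kappa$) and its symbol grows polynomially in $\xi$ (already $(\xi-\tau)^2$), the Moyal expansion of the commutator has nonzero leading term proportional to $\hbar\,\partial_\sigma n_\hbar\cdot\theta'(\xi)$: a genuine $\ooo(\hbar)$ contribution whose symbol carries a magnetic derivative $(\xi-\tau)$. Because the target estimate has a prefactor $c\hbar^2$ on the left, an $\ooo(\hbar)$ commutator error cannot be absorbed, and neither can the analogous losses in the cross terms between $\psi_{\rm low}$ and $\psi_{\rm high}$. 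What rescues the argument is that the commutator's symbol is localized on $\operatorname{supp}\theta'$, a compact set avoiding $\xi_0$ where the transverse ellipticity provides a uniform lower bound; one can therefore re-estimate the commutator term using the equation on a slightly fatter frequency window, gaining another factor of $\hbar$, and iterate. This is precisely the induction on the size of the support of $\chi_2$ carried out in Lemma~\ref{lem.H2bis}, which produces the $\ooo(\hbar^N)\|\psi\|$ bounds that go into Proposition~\ref{prop.controlxi2}. Without spelling out this bootstrap, the low-frequency estimate does not close; once it is in place, your proof becomes essentially the paper's.
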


\subsection{Preliminary lemmas}
Let us consider a smooth function $\chi_2=\chi_2(\xi)$ equal to $1$ away from a compact and whose support avoids $\xi_0$.

\begin{lemma}\label{lem.L2H1}
	There exist $C, \hbar_0>0$ such that for all $\hbar\in(0,\hbar_0)$ and all $\psi\in\Dom(\mathscr{N}_\hbar^{\varphi})$,
	\[\|\Op \chi_2\psi\|+\|(\hbar D_\sigma-\tau)\Op\chi_2\psi\|+\|D_\tau\Op\chi_2\psi\|\leq C\|(\mathscr{N}^\varphi_{\hbar}-z)\Op\chi_2\psi\|\,.\]

\end{lemma}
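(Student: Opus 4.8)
\textbf{Proof plan for Lemma \ref{lem.L2H1}.}

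The plan is to exploit the fact that $\Op\chi_2$ localizes us away from the bottom frequency $\xi_0$, where the model operator $n_0 = -\partial_\tau^2 + (\xi-\tau)^2$ has spectral bottom $\mu_1(\xi)$ bounded below by a constant strictly greater than $z \approx \Theta_0$. Concretely, I would first record the elementary lower bound for the transverse model: for $\psi$ supported (in frequency) away from $\xi_0$, one has, fiberwise in $\sigma$, the Neumann quadratic form inequality
\[
\|D_\tau u\|^2 + \|(\xi - \tau)u\|^2 \geq \mu_1(\xi)\|u\|^2 \geq (\Theta_0 + 2\epsilon)\|u\|^2\,,
\]
since $\xi$ stays in a region where $\mu_1(\xi) \geq \Theta_0 + 2\epsilon$ for some $\epsilon > 0$ (recall $\mu_1$ has a unique non-degenerate minimum equal to $\Theta_0$ at $\xi_0$). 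Quantizing this — i.e. testing $\langle(\mathscr{N}^\varphi_\hbar - z)\Op\chi_2\psi, \Op\chi_2\psi\rangle$ and using that $\mathscr{N}^\varphi_\hbar$ differs from $\Op(n_0)$ by terms that are $\mathscr{O}(\hbar^{1/2})$ relatively bounded (here the $\varphi'$ and curvature terms, plus the weight $a_\hbar = 1 + \mathscr{O}(\hbar^{1/2-2\eta})$, plus the symbolic remainder from composition) — gives a Gårding-type coercivity
\[
\Re\langle(\mathscr{N}^\varphi_\hbar - z)\Op\chi_2\psi, \Op\chi_2\psi\rangle \geq \epsilon\big(\|D_\tau \Op\chi_2\psi\|^2 + \|(\hbar D_\sigma - \tau)\Op\chi_2\psi\|^2 + \|\Op\chi_2\psi\|^2\big) - C\hbar\|\Op\chi_2\psi\|^2\,,
\]
for $\hbar$ small enough the $C\hbar$ term being absorbed. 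Cauchy--Schwarz on the left-hand side then yields the claimed estimate, with the $(\hbar D_\sigma - \tau)$-norm and the $D_\tau$-norm both controlled.

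The key technical points I would need to handle carefully are: (a) the commutator $[\Op\chi_2, \mathscr{N}^\varphi_\hbar]$ is $\mathscr{O}(\hbar)$ (symbol calculus, since $\chi_2$ depends only on $\xi$ and the principal symbol $p_0$ has bounded $\sigma$-derivatives once one cuts off as in Section \ref{sec.cutoff}), so that one may freely move $\Op\chi_2$ past the operator modulo acceptable errors; (b) the truncation weights $a_\hbar$ and $c_\mu$ only perturb the quadratic form by $\mathscr{O}(\hbar^{1/2-2\eta})\ll 1$ relatively, since $a_\hbar \to 1$; (c) the first-order term $2i\hbar^{1/2}(\xi - \tau)\varphi'$ coming from the conjugation by $e^{\varphi/\hbar^{1/2}}$ is $\mathscr{O}(\hbar^{1/2})$ relative to $\|(\hbar D_\sigma - \tau)\cdot\| + \|\cdot\|$ because $\varphi'$ is bounded (Assumption \ref{hyp.phi}), hence absorbable by Young's inequality into the coercive terms with a small constant.

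I would expect the main obstacle to be bookkeeping the relative boundedness of all subprincipal and remainder contributions uniformly in the parameter $\mu = \hbar^{1/2+2\eta}$ — in particular making sure the $\kappa \tau^2$-type terms (which carry growth in $\tau$) and the $\mathscr{O}(\hbar^2)$ parametrix remainder in the weighted space $L^2(\langle\tau\rangle^6)$ do not spoil the simple $L^2$ coercivity on the range of $\Op\chi_2$. Here one uses that $\Op\chi_2\psi$ still satisfies the normal Agmon-type localization (or simply that, microlocally away from $\xi_0$, the operator is elliptic of the right order so that powers of $\tau$ applied to $\Op\chi_2\psi$ are controlled by $\|(\mathscr{N}^\varphi_\hbar - z)\Op\chi_2\psi\|$), which closes the estimate without the weighted remainder appearing on the right-hand side.
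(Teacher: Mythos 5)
Your plan is essentially the paper's proof: one tests the quadratic form on $\Op\chi_2\psi$, bounds it below (using $a_\hbar = 1 + o(1)$ and the smallness of the $\hbar^{1/2}\varphi'$ and $\hbar c_\mu\kappa\tau^2$ corrections) by $(1+o(1))$ times the de Gennes model form, exploits $\mu_1 \geq c_1 > \Theta_0$ on the support of $\chi_2$ to absorb $-\Re z$, and finishes with Cauchy--Schwarz. Note that your point (a) about moving $\Op\chi_2$ past $\mathscr{N}^\varphi_\hbar$ is not actually needed here, since the claimed estimate already has $\Op\chi_2\psi$ in every slot; that commutator bookkeeping only enters in Lemma~\ref{lem.H2bis}.
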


\begin{proof}
	We write
	\begin{multline}\label{eq.L2H1a}
	\Re\langle(\mathscr{N}^\varphi_{\hbar}-z)\Op \chi_2 \psi,\Op\chi_2\psi\rangle\\
	\geq (1+o(1))\langle \left(D_\tau^2+(\hbar D_\sigma-\tau)^2\right)\Op\chi_2\psi,\Op\chi_2\psi\rangle-\Re z\|\Op\chi_2\psi\|^2\,.
	\end{multline}
	Thus, by using the support of $\chi_2$ and the properties of $\mu_1$,
	\[\Re\langle(\mathscr{N}^{\varphi}_\hbar-z)\Op \chi_2 \psi,\Op\chi_2\psi\rangle\geq \left((1+o(1))c_1-\Re z\right)\|\Op\chi_2\psi\|^2\,,\quad c_1>\Theta_0\,.\]
	Using again \eqref{eq.L2H1a} and the Cauchy-Schwarz inequality, the conclusion follows.

\end{proof}

Actually, we have also an \enquote{$H^2$-control} with respect to the \enquote{magnetic derivatives}.
\begin{lemma}\label{lem.H2}
	There exist $C, \hbar_0>0$ such that for all $\hbar\in(0,\hbar_0)$ and all $\psi\in\Dom(\mathscr{N}_\hbar^{\varphi})$,
	\[\|(\hbar D_\sigma-\tau)^2\Op\chi_2\psi\|+\| D_\tau^2\Op\chi_2\psi\|\leq C \|(\mathscr{N}^{\varphi}_\hbar-z)\Op\chi_2\psi\|\,.\]
\end{lemma}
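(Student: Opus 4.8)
The plan is to follow the same strategy as in Lemma \ref{lem.L2H1}, but now testing the sesquilinear form against second-order magnetic derivatives of $\Op\chi_2\psi$, so as to gain $H^2$-control. The starting point is the elliptic character of $\mathscr{N}^\varphi_\hbar$ \emph{on the range of} $\Op\chi_2$: because $\chi_2$ avoids $\xi_0$, the operator symbol $n_0=-\partial_\tau^2+(\xi-\tau)^2$ has spectrum bounded below by $c_1>\Theta_0$ there, and the subprincipal terms $\hbar^{1/2}n_1+\hbar n_2+\cdots$ are relatively bounded perturbations. First I would write, for $v=\Op\chi_2\psi$,
\[
\|(\hbar D_\sigma-\tau)^2 v\|^2+\|D_\tau^2 v\|^2+2\Re\langle (\hbar D_\sigma-\tau)^2 v, D_\tau^2 v\rangle = \|\big(D_\tau^2+(\hbar D_\sigma-\tau)^2\big)v\|^2\,,
\]
and observe that the cross term $\Re\langle (\hbar D_\sigma-\tau)^2 v, D_\tau^2 v\rangle$ is nonnegative up to a commutator of lower order (this is the standard computation: integrating by parts in $\tau$, the Neumann boundary condition kills the boundary contribution and one is left with $\|(\hbar D_\sigma-\tau)D_\tau v\|^2$ plus commutator terms like $\hbar\langle D_\sigma v, D_\tau v\rangle$, controlled by Lemma \ref{lem.L2H1}). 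So it suffices to bound $\|(D_\tau^2+(\hbar D_\sigma-\tau)^2)v\|$.

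Next I would exploit that, microlocally on $\mathrm{supp}\,\chi_2$, we have $D_\tau^2+(\hbar D_\sigma-\tau)^2 = \mathscr{N}^\varphi_\hbar + (\text{lower order})$, where the lower-order part consists of $a_\hbar^{-1}-1=\ooo(\hbar^{1/2-2\eta})$ corrections, the terms $\hbar^{1/2}\varphi'(\hbar D_\sigma-\tau)$, $\hbar^{1/2}\varphi'$-squared, and $\hbar c_\mu\kappa\tau^2$-type contributions — each of which is either a bounded multiple of $\hbar^{1/2}$ times a first-order magnetic derivative, or involves powers of $\tau$ that are themselves controlled using the normal localization (the same $\seq\tau$-weights already appearing throughout). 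Hence
\[
\|(D_\tau^2+(\hbar D_\sigma-\tau)^2)v\| \leq \|(\mathscr{N}^\varphi_\hbar-z)v\| + C\hbar^{1/2}\big(\|(\hbar D_\sigma-\tau)v\|+\|D_\tau v\|+\|v\|\big) + (\text{$\tau$-weighted terms})\,,
\]
and the first-order quantities on the right are absorbed by Lemma \ref{lem.L2H1}; the $\tau$-weighted terms are handled exactly as in the remainder analysis of Theorem \ref{thm.Grushin}, or by noting that $\chi_2$-microlocalization together with transverse decay makes them lower order. One then commutes $\Op\chi_2$ through $\mathscr{N}^\varphi_\hbar$: since $\chi_2$ depends only on $\xi$, $[\mathscr{N}^\varphi_\hbar,\Op\chi_2]$ is $\ooo(\hbar)$ relative to first-order magnetic derivatives, so $\|(\mathscr{N}^\varphi_\hbar-z)\Op\chi_2\psi\|$ can be used directly on the right-hand side, which is what the statement asserts.

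The main obstacle is the bookkeeping of the $\tau$-weighted remainder terms: the coefficients $\hbar c_\mu\kappa\tau^2$ and the curvature corrections in $a_\hbar^{-1}$ bring in factors of $\tau$ and $\tau^2$, and when these hit second-order derivatives one must be careful that the resulting operators are genuinely of lower order (i.e. gain a positive power of $\hbar$) in the appropriate $\seq\tau$-weighted topology, uniformly in $\mu=\hbar^{1/2+2\eta}$. This is the same subtlety flagged in the remark after \eqref{eq.pj}: one does the estimates with test functions in $\mathrm{Dom}(p_0)$ and uses that, modulo powers of $\hbar$, the operators live in $S(1)$, so commutators and products behave as expected. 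Once that is granted, the argument is a routine elliptic regularity bootstrap and the inequality follows by absorbing the first-order terms via Lemma \ref{lem.L2H1} and the second-order cross term via integration by parts with the Neumann condition.
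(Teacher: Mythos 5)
Your proposal is correct and takes essentially the same route as the paper: the paper's entire proof is the one-line remark \enquote{standard elliptic estimates, controlling first the magnetic tangential derivative}, and your square-expansion with the nonnegative cross term $2\Re\langle(\hbar D_\sigma-\tau)^2v,D_\tau^2v\rangle$, followed by absorption of first-order quantities via Lemma~\ref{lem.L2H1} and $\langle\tau\rangle$-weighted bookkeeping of the subprincipal terms, is a faithful instantiation of exactly that idea. Two small points: after integrating by parts in $\tau$ with the Neumann condition, the commutator term is $-2\Re\langle(\hbar D_\sigma-\tau)v,D_\tau v\rangle$ (no extra factor of $\hbar$); and the final paragraph about commuting $\Op\chi_2$ through $\mathscr{N}^\varphi_\hbar$ is superfluous here, since the statement already has $\Op\chi_2\psi$ inside the norm on both sides (that commutation is what Lemma~\ref{lem.H2bis} handles, not this lemma).
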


\begin{proof}
This is obtained through standard elliptic estimates by controlling first the magnetic tangential derivative.
\end{proof}

\begin{lemma}\label{lem.H2bis}
	Let $N\in\mathbb{N}$. 	There exist $C, \hbar_0>0$ such that for all $\hbar\in(0,\hbar_0)$ and all $\psi\in\Dom(\mathscr{N}_\hbar^{\varphi})$,
\begin{multline*}
\|\Op\chi_2\psi\|+\|D_\tau\Op\chi_2\psi\|+\|(\hbar D_\sigma-\tau)\Op\chi_2\psi\|
+\| D_\tau^2\Op\chi_2\psi\|\\
+\|(\hbar D_s-\tau)^2\Op\chi_2\psi\|
\leq C\|(\mathscr{N}^{\varphi}_\hbar-z)\psi\|+\mathscr{O}(\hbar^N)\|\psi\|\,.
\end{multline*}
\end{lemma}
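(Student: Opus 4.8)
The plan is to deduce Lemma \ref{lem.H2bis} from Lemmas \ref{lem.L2H1} and \ref{lem.H2} by commuting the frequency cutoff $\Op\chi_2$ with the operator $\mathscr{N}^\varphi_\hbar$ and absorbing the commutator errors. First I would note that by Lemmas \ref{lem.L2H1} and \ref{lem.H2} the left-hand side of the claimed inequality is bounded (up to a constant) by $\|(\mathscr{N}^\varphi_\hbar - z)\Op\chi_2\psi\|$, so the whole problem reduces to estimating this quantity by $\|(\mathscr{N}^\varphi_\hbar-z)\psi\|$ plus a harmless remainder. The natural move is to write
\[(\mathscr{N}^\varphi_\hbar - z)\Op\chi_2\psi = \Op\chi_2(\mathscr{N}^\varphi_\hbar - z)\psi + [\mathscr{N}^\varphi_\hbar, \Op\chi_2]\psi\,,\]
and observe that $\|\Op\chi_2(\mathscr{N}^\varphi_\hbar-z)\psi\| \leq C\|(\mathscr{N}^\varphi_\hbar-z)\psi\|$ since $\Op\chi_2$ is bounded (Calderón--Vaillancourt). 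So everything hinges on controlling the commutator term $[\mathscr{N}^\varphi_\hbar, \Op\chi_2]\psi$.

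The key point is that $\chi_2$ is a smooth symbol depending only on $\xi$, constant away from a compact set, whose support avoids $\xi_0$; in particular its derivatives are compactly supported in $\xi$ (away from $\xi_0$). The commutator $[\mathscr{N}^\varphi_\hbar, \Op\chi_2]$ is, by the symbolic calculus, an $\hbar$-pseudodifferential operator of order $\hbar$ whose symbol is supported on $\mathrm{supp}\,\chi_2'$, hence again microlocalized away from $\xi_0$. This means we can insert a second cutoff $\tilde\chi_2$, equal to $1$ on $\mathrm{supp}\,\chi_2'$ and still vanishing near $\xi_0$, and write $[\mathscr{N}^\varphi_\hbar,\Op\chi_2]\psi = [\mathscr{N}^\varphi_\hbar,\Op\chi_2]\Op\tilde\chi_2\psi + \mathscr{O}(\hbar^\infty)\|\psi\|$, where the $\mathscr{O}(\hbar^\infty)$ comes from the disjoint-support symbolic remainder. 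Then $\|[\mathscr{N}^\varphi_\hbar,\Op\chi_2]\Op\tilde\chi_2\psi\| \leq C\hbar\|(\hbar D_\sigma - \tau)\Op\tilde\chi_2\psi\| + C\hbar\|\Op\tilde\chi_2\psi\| + \ldots$, because differentiating $\mathscr{N}^\varphi_\hbar$ in $\sigma$ produces factors of $(\hbar D_\sigma - \tau)$ (together with lower-order terms controlled by $\langle\tau\rangle^2$ which, on the range of $\Op\tilde\chi_2$, are again handled by elliptic estimates as in Lemma \ref{lem.H2}). Applying Lemma \ref{lem.L2H1} (and Lemma \ref{lem.H2}) with $\tilde\chi_2$ in place of $\chi_2$, this is $\leq C\hbar\|(\mathscr{N}^\varphi_\hbar - z)\Op\tilde\chi_2\psi\|$, and iterating the same commutator argument once more (or simply bounding $\|(\mathscr{N}^\varphi_\hbar-z)\Op\tilde\chi_2\psi\| \leq C\|(\mathscr{N}^\varphi_\hbar - z)\psi\| + C\hbar\|\cdots\|$ and absorbing the $\hbar$-small term) closes the estimate.

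Putting the pieces together: $\|(\mathscr{N}^\varphi_\hbar-z)\Op\chi_2\psi\| \leq C\|(\mathscr{N}^\varphi_\hbar - z)\psi\| + C\hbar\|(\mathscr{N}^\varphi_\hbar-z)\Op\tilde\chi_2\psi\| + \mathscr{O}(\hbar^\infty)\|\psi\|$, and a bootstrap on the nested cutoffs (each commutation gaining a factor $\hbar$) yields, after finitely many steps, $\|(\mathscr{N}^\varphi_\hbar-z)\Op\chi_2\psi\| \leq C\|(\mathscr{N}^\varphi_\hbar-z)\psi\| + \mathscr{O}(\hbar^N)\|\psi\|$ for any prescribed $N$. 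Combining with Lemmas \ref{lem.L2H1} and \ref{lem.H2} gives exactly the asserted bound on $\|\Op\chi_2\psi\| + \|D_\tau\Op\chi_2\psi\| + \|(\hbar D_\sigma-\tau)\Op\chi_2\psi\| + \|D_\tau^2\Op\chi_2\psi\| + \|(\hbar D_\sigma-\tau)^2\Op\chi_2\psi\|$.

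I expect the main obstacle to be bookkeeping the $\tau$-weights carefully: the operator $\mathscr{N}^\varphi_\hbar$ has coefficients growing in $\tau$ (the terms $\tau^2$, $\kappa c_\mu \tau^2$, etc.), so the commutator $[\mathscr{N}^\varphi_\hbar, \Op\chi_2]$ is not literally $\mathscr{O}(\hbar)$ in $\mathscr{L}(L^2)$ but only after microlocalizing in $\xi$ and using the elliptic control of $\|(\hbar D_\sigma - \tau)^2\Op\tilde\chi_2\psi\|$ from Lemma \ref{lem.H2}. One must check that on the support of $\chi_2'$ (away from $\xi_0$, where $\mu_1 > \Theta_0$) the operator $\mathscr{N}^\varphi_\hbar - z$ is genuinely elliptic in the transverse variable, so that all the $\langle\tau\rangle$-weighted quantities appearing when expanding the commutator are indeed dominated by $\|(\mathscr{N}^\varphi_\hbar - z)\Op\tilde\chi_2\psi\|$. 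This is where the hypothesis that $\mathrm{supp}\,\chi_2$ avoids $\xi_0$ is essential, and it is the only slightly delicate point; the rest is routine symbolic calculus and the iteration scheme sketched above.
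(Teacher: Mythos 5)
Your proposal is correct and follows essentially the same route as the paper: reduce to $\|(\mathscr{N}^\varphi_\hbar-z)\Op\chi_2\psi\|$ via Lemmas \ref{lem.L2H1} and \ref{lem.H2}, estimate the commutator $[\mathscr{N}^\varphi_\hbar,\Op\chi_2]$ (which gains $\hbar$ on a slightly enlarged microlocal support plus $\mathscr{O}(\hbar^\infty)$ away from it), and bootstrap over nested cutoffs. The only bookkeeping adjustment is that the $\mathscr{O}(\hbar^\infty)$ remainder naturally carries the first-order quantities $\|D_\tau\psi\|$ and $\|(\hbar D_\sigma-\tau)\psi\|$ rather than merely $\|\psi\|$, but these are absorbed at the end via the elementary bound $\|(\hbar D_\sigma-\tau)\psi\|+\|D_\tau\psi\|\leq C\|(\mathscr{N}^\varphi_\hbar-z)\psi\|+C\|\psi\|$, exactly as the paper does.
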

\begin{proof}
From Lemmas \ref{lem.L2H1} and \ref{lem.H2}, we have
\begin{multline}\label{eq.0}
\|\Op \chi_2\psi\|+\|(\hbar D_\sigma-\tau)\Op\chi_2\psi\|+\|D_\tau\Op\chi_2\psi\|
\\	+\|(\hbar D_\sigma-\tau)^2\Op\chi_2\psi\|+\| D_\tau^2\Op\chi_2\psi\|
\leq C\|(\mathscr{N}^\varphi_{\hbar}-z)\Op\chi_2\psi\|\,.
\end{multline}
Let us deal with the r.h.s. and notice that
\begin{equation}\label{eq.commutchi2}
\|(\mathscr{N}^{\varphi}_\hbar-z)\Op\chi_2\psi\|\leq \|(\mathscr{N}^{\varphi}_\hbar-z)\psi\|+\|[\mathscr{N}^{\varphi}_\hbar,\Op\chi_2]\psi\|\,.
\end{equation}
Let us consider the commutator. One of the terms is
\begin{equation}\label{eq.1}
\begin{split}
\|[a_\hbar^{-1}D_\tau a_\hbar D_\tau,\Op\chi_2]\psi\|&=\|[ a_\hbar^{-1}\partial_\tau a_\hbar,\Op\chi_2]D_\tau\psi\|\\
&\leq\mathscr{O}(\hbar^\infty)\|D_\tau\psi\|+\hbar\|\Op\underline{\chi_2}D_\tau\psi\|\,,
\end{split}
\end{equation}
where $\underline{\chi_2}$ has a support slightly larger than the one of $\chi_2$, and where we used classical results of composition of pseudo-differential operators. The other term is
\begin{equation}\label{eq.2}
\begin {split}
&\|[a_\hbar^{-1}(\hbar D_\sigma-\tau+\hbar\kappa\tau^2/2) a_\hbar^{-1}(\hbar D_s-\tau+c_\mu\hbar\kappa\tau^2/2),\Op\chi_2]\psi\|\\
&\leq \mathscr{O}(\hbar^\infty)(\|(\hbar D_\sigma-\tau)\psi\|+\|\psi\|)+C\hbar\left(\|\Op\underline{\chi_2}(\hbar D_\sigma-\tau)\psi\|+\|\Op\underline{\chi_2}\psi\|\right)\,.
\end{split}
\end{equation}
Using \eqref{eq.0}, \eqref{eq.commutchi2}, \eqref{eq.1}, and \eqref{eq.2}, an induction argument (on the size of the support of $\chi_2$) provides us with
\begin{multline*}
\|\Op\chi_2\psi\|+\|D_\tau\Op\chi_2\psi\|+\|(\hbar D_\sigma-\tau)\Op\chi_2\psi\|
+\| D_\tau^2\Op\chi_2\psi\|\\
+\|(\hbar D_\sigma-\tau)^2\Op\chi_2\psi\|
\leq C\|(\mathscr{N}^{\varphi}_\hbar-z)\psi\|+\mathscr{O}(\hbar^N)(\|\psi\|+\|(\hbar D_\sigma-\tau)\psi\|+\|D_\tau\psi\|)\,.
\end{multline*}
Noticing that
\[	\|(\hbar D_\sigma-\tau)\psi\|+\|D_\tau\psi\|\leq C\|\mathscr{N}^{\varphi}_\hbar\psi\|+C\|\psi\|\leq C\|(\mathscr{N}^{\varphi}_\hbar-z)\psi\|+C\|\psi\| \,,\]
the conclusion follows.
\end{proof}
\begin{remark}\label{rem.DirNeu}
	The estimates in Lemmas \ref{lem.L2H1}, \ref{lem.H2}, and \ref{lem.H2bis} are also true for $\psi$ satisfying the Dirichlet condition (instead of the Neumann condition).
\end{remark}

We would like to get a control $\hbar D_\sigma$ instead of $\hbar D_\sigma-\tau$. In particular, one should control $\tau$ with the normal Agmon estimates.

\begin{proposition}\label{prop.controlxi2}
		Let $N\in\mathbb{N}$. 	There exist $C, \hbar_0>0$ such that for all $\hbar\in(0,\hbar_0)$ and all $\psi\in\Dom(\mathscr{N}_\hbar^{\varphi})$,
	\begin{multline*}
\|\Op\chi_2\psi\|+\|D_\tau\Op\chi_2\psi\|+\|\hbar D_s\Op\chi_2\psi\|+\| D_\tau^2\Op\chi_2\psi\|+\|(\hbar D_s)^2\Op\chi_2\psi\|
\\
+\|\tau \hbar D_s \Op \chi_2\psi\|\leq C\|(\mathscr{N}^{\varphi}_\hbar-z)\psi\|+\|\tau(\mathscr{N}^{\varphi}_\hbar-z)\psi\|+\|\tau^2(\mathscr{N}^{\varphi}_\hbar-z)\psi\|+\mathscr{O}(\hbar^N)\|\psi\|\,.
\end{multline*}	
\end{proposition}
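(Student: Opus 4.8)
The plan is to upgrade the "magnetic derivative" control of Lemma~\ref{lem.H2bis} into a genuine control of $\hbar D_\sigma$ and of the weighted quantity $\tau\hbar D_\sigma$, by absorbing the error coming from the replacement $\hbar D_\sigma-\tau \leadsto \hbar D_\sigma$ via the normal (transverse) Agmon estimates. The key point is that on the range of $\Op\chi_2$ the operator $\mathscr{N}^\varphi_\hbar$ is elliptic (microlocally away from $\xi_0$), so $\tau$ behaves like a bounded multiplier on $\Op\chi_2\psi$ once the transverse localization is in force, and each extra power of $\tau$ that we wish to control in the conclusion is paid for by a corresponding power of $\tau$ applied to $(\mathscr{N}^\varphi_\hbar-z)\psi$ on the right-hand side.

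\textbf{Step 1: From $\hbar D_\sigma-\tau$ to $\hbar D_\sigma$.} First I would write $\hbar D_\sigma \Op\chi_2\psi = (\hbar D_\sigma-\tau)\Op\chi_2\psi + \tau\,\Op\chi_2\psi$, and similarly $(\hbar D_\sigma)^2 = ((\hbar D_\sigma-\tau)+\tau)^2$, expanding the square and keeping track of the commutator $[\hbar D_\sigma,\tau]=0$ and $[\tau, \cdot]$ terms. By Lemma~\ref{lem.H2bis} the terms involving $(\hbar D_\sigma-\tau)$ and $(\hbar D_\sigma-\tau)^2$ are already controlled by $\|(\mathscr{N}^\varphi_\hbar-z)\psi\|+\mathscr{O}(\hbar^N)\|\psi\|$; what remains is to bound $\|\tau\Op\chi_2\psi\|$, $\|\tau^2\Op\chi_2\psi\|$ and mixed terms such as $\|\tau(\hbar D_\sigma-\tau)\Op\chi_2\psi\|$ and $\|\tau D_\tau^2\Op\chi_2\psi\|$. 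This is where the transverse Agmon estimates (the analogue of Proposition~\ref{prop.Agmont} at the rescaled level, i.e. the exponential decay $e^{2\alpha\tau}$ of eigenfunctions, or more precisely the local-in-energy version applied to $\psi$) enter: they allow, after commuting $\tau^k$ through $\mathscr{N}^\varphi_\hbar$, to bound $\|\tau^k v\|$ and $\|\tau^k(\hbar D_\sigma-\tau)v\|$, $\|\tau^k D_\tau v\|$ for $v=\Op\chi_2\psi$ by $\|\tau^k(\mathscr{N}^\varphi_\hbar-z)\psi\|$ plus lower-order weighted terms and $\mathscr{O}(\hbar^N)\|\psi\|$.

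\textbf{Step 2: Weighted elliptic estimates.} Concretely I would repeat the argument of Lemmas~\ref{lem.L2H1}--\ref{lem.H2bis} but testing $(\mathscr{N}^\varphi_\hbar-z)\Op\chi_2\psi$ against $\tau^{2k}\Op\chi_2\psi$ for $k=1,2$ (with the Neumann boundary term vanishing, or contributing with a favorable sign, since $\partial_\tau(\Op\chi_2\psi)(\cdot,0)=0$). The positivity inequality \eqref{eq.L2H1a} becomes, after integration by parts in $\tau$, a lower bound of the form $(1+o(1))\langle(D_\tau^2+(\hbar D_\sigma-\tau)^2)\tau^k v,\tau^k v\rangle - \Re z\|\tau^k v\|^2$ plus commutator terms $[\tau^k, D_\tau^2]$ which produce $\tau^{k-1}D_\tau v$ and $\tau^{k-2}v$ — strictly lower-order in the weight and absorbed inductively. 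Since $\chi_2$ avoids $\xi_0$, the harmonic-oscillator lower bound gives $(1+o(1))c_1 - \Re z \geq c>0$, so $\|\tau^k v\|$ (and then $\|\tau^k D_\tau v\|$, $\|\tau^k(\hbar D_\sigma-\tau) v\|$, and one more derivative by the argument of Lemma~\ref{lem.H2}) is controlled by $\|\tau^k(\mathscr{N}^\varphi_\hbar-z)\Op\chi_2\psi\|$. Then the commutator estimates \eqref{eq.1}--\eqref{eq.2} are run with the extra weight $\tau^k$ in front, using $[\tau^k,\Op\chi_2]=0$ (the symbol of $\Op\chi_2$ depends only on $\xi$) and $[\mathscr{N}^\varphi_\hbar,\Op\chi_2]$ bounded by $\hbar\Op\underline{\chi_2}$ times first-order operators, so that an induction on the size of $\mathrm{supp}\,\chi_2$ again closes, exactly as in Lemma~\ref{lem.H2bis}, now producing $\|\tau^k(\mathscr{N}^\varphi_\hbar-z)\psi\|$ for $k=0,1,2$ on the right. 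Collecting Steps 1 and 2, and noting $\|\tau\hbar D_\sigma v\|\leq \|\tau(\hbar D_\sigma-\tau)v\| + \|\tau^2 v\|$, gives all the asserted terms.

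\textbf{Main obstacle.} The delicate point is the bookkeeping of the weighted commutators: each integration by parts of $\tau^{2k}$ against $D_\tau^2$ or against $(\hbar D_\sigma-\tau)^2$ generates lower-weight terms that must be reabsorbed, and one must be careful that the truncation $c_\mu$ inside $a_\hbar$ (which carries negative powers of $\hbar$ through $c_\mu'$) does not spoil the estimates — here one uses, as in the proof of Theorem~\ref{thm.Grushin}, that these terms are $\mathscr{O}(\hbar^\infty)$ on the normally-decaying part and in any case lower order once paired with the transverse weight. Keeping the hierarchy of weights and $\hbar$-powers consistent through the induction on $\mathrm{supp}\,\chi_2$ is the bulk of the (routine but lengthy) work; the structural input — ellipticity off $\xi_0$ plus transverse Agmon decay — is exactly what makes it go through.
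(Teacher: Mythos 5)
Your proposal follows essentially the same route as the paper: decompose $\hbar D_\sigma=(\hbar D_\sigma-\tau)+\tau$, reduce to controlling $\|\tau^k\Op\chi_2\psi\|$ via weighted elliptic estimates together with commutators $[\mathscr{N}^\varphi_\hbar,\tau^k]$ and $[\mathscr{N}^\varphi_\hbar,\Op\chi_2]$, and exploit the Dirichlet-type boundary behaviour of $\tau^k\psi$ at $\tau=0$. The only difference is cosmetic: the paper does not re-run Lemmas~\ref{lem.L2H1}--\ref{lem.H2bis} with $\tau^{2k}$ in the pairing, it simply applies Lemma~\ref{lem.H2bis} verbatim to $\tau\psi$ and $\tau^2\psi$ by invoking Remark~\ref{rem.DirNeu}, and then converts $(\mathscr{N}^\varphi_\hbar-z)\tau^k\psi$ into $\tau^k(\mathscr{N}^\varphi_\hbar-z)\psi$ by commutation.

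Two small precision points worth flagging. First, the mechanism that bounds $\|\tau^k v\|$ is not properly a \enquote{transverse Agmon estimate} (those are decay statements about eigenfunctions); it is the elliptic/coercivity inequality for the Dirichlet-boundary function $\tau^k\psi$, namely \eqref{eq.tauk}: $\|\tau^k\psi\|\leq C\|(\mathscr{N}^\varphi_\hbar-z)\tau^k\psi\|$, valid since the Dirichlet realization is bounded below by $1>\Re z$. Second, this Dirichlet inequality is precisely what is needed to absorb the $\mathscr{O}(\hbar^N)\|\tau^k\psi\|$ remainders produced when Lemma~\ref{lem.H2bis} is applied to $\tau^k\psi$ and to close the loop with an $\mathscr{O}(\hbar^N)\|\psi\|$ term; your write-up gestures at the Dirichlet condition (vanishing boundary terms) but does not make this last conversion explicit. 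Neither point is a genuine gap.
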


\begin{proof}
Let us apply Lemma \ref{lem.H2bis} to $\tau\psi$ (recall Remark \ref{rem.DirNeu}). We get
\[\|\tau\Op\chi_2\psi\|\leq C\|(\mathscr{N}^{\varphi}_\hbar-z)\tau\psi\|+\mathscr{O}(\hbar^N)\|\tau\psi\|\,.\]	
Replacing $\psi$ by $\Op\underline{\chi_2}\psi$, commuting $\mathscr{N}^{\varphi}_\hbar$ with $\tau$ and using Lemma \ref{lem.H2bis}, we get
\[\|\tau\Op\chi_2\psi\|\leq C\|\tau(\mathscr{N}^{\varphi}_\hbar-z)\psi\|+\mathscr{O}(\hbar^N)(\|\psi\|+\|\tau\psi\|)\,.\]
With Lemma \ref{lem.H2bis}, we get
\begin{equation*}
\|(\hbar D_s)\Op\chi_2\psi\|
\leq C\|(\mathscr{N}^{\varphi}_\hbar-z)\psi\|+ C\|\tau(\mathscr{N}^{\varphi}_\hbar-z)\psi\|+\mathscr{O}(\hbar^N)(\|\psi\|+\|\tau\psi\|)\,.
\end{equation*}
In the same spirit, we get
\begin{multline*}
\|(\hbar D_s)^2\Op\chi_2\psi\|
\leq C\|(\mathscr{N}^{\varphi}_\hbar-z)\psi\|+ C\|\tau(\mathscr{N}^{\varphi}_\hbar-z)\psi\|+C\|\tau^2(\mathscr{N}^{\varphi}_\hbar-z)\psi\|\\
+\mathscr{O}(\hbar^N)(\|(1+\tau+\tau^2)\psi\|)\,,
\end{multline*}
\begin{multline*}
\|\tau\hbar D_s\Op\chi_2\psi\|
\leq C\|(\mathscr{N}^{\varphi}_\hbar-z)\psi\|+ C\|\tau(\mathscr{N}^{\varphi}_\hbar-z)\psi\|+C\|\tau^2(\mathscr{N}^{\varphi}_\hbar-z)\psi\|\\
+\mathscr{O}(\hbar^N)(\|(1+\tau+\tau^2)\psi\|)\,.
\end{multline*}
Due to the Dirichlet condition, we have, for $k\geq 1$,
\begin{equation}\label{eq.tauk}
\|\tau^k\psi\|\leq C \|(\mathscr{N}^{\varphi}_\hbar-z)\tau^k\psi\|\,.
\end{equation}
Computing commutators and controlling them by $\|\mathscr{N}^{\varphi}_\hbar\psi\|$, the result follows upon noticing that
\[\|\mathscr{N}^{\varphi}_\hbar\psi\|\leq \|(\mathscr{N}^{\varphi}_\hbar-z)\psi\|+|z|\|\psi\|\,.\]
\end{proof}

\subsection{Proof of Theorem \ref{thm.coercivity-2D}}

With the triangle inequality,
\begin{equation}\label{eq.triangle}
\|(\mathscr{N}^{\varphi}_\hbar-z)\psi\|\geq\|(\mathscr{P}_\hbar-z)\psi\|-\|(\mathscr{N}^{\varphi}_\hbar-\mathscr{P}_\hbar)\psi\|\,.
\end{equation}
Using Proposition \ref{prop.controlxi2} with $\chi_2$ such that $1-\chi_2$ is supported in $\{\chi_1(\xi)=\xi\}$ to control the terms $(\hbar D_s)^2$ and $\hbar\tau D_\sigma$, we get
\begin{multline}\label{eq.xi2}
\|(\mathscr{N}^{\varphi}_\hbar-\mathscr{P}_\hbar)\psi\|
\leq C\|(\mathscr{N}^{\varphi}_\hbar-z)\psi\|+C\|\tau(\mathscr{N}^{\varphi}_\hbar-z)\psi\|+C\|\tau^2(\mathscr{N}^{\varphi}_\hbar-z)\psi\|\\
+\mathscr{O}(\hbar^N)\|\psi\|\,.
\end{multline}
Combining \eqref{eq.triangle} and \eqref{eq.xi2} with Theorem \ref{thm.coercivity}, provides us with
\begin{equation}\label{eq.tau6tobecontrolled}
cR^2\hbar^2\|\psi\|\leq \|\langle\tau\rangle^2(\mathscr{N}^{\varphi}_\hbar-z)\psi\|+C_R\hbar^2\|\chi_0(\hbar^{-\frac 12}R^{-1}(\sigma-s_r)) \psi\|+\hbar^2\|\tau^6\psi\|+C\hbar^N\|\psi\|\,.
\end{equation}
By using again \eqref{eq.tauk}, we get
\[\|\tau^6\psi\|\leq C\|(\mathscr{N}^{\varphi}_\hbar-z)\tau^6\psi\|\leq C\|\tau^6(\mathscr{N}^{\varphi}_\hbar-z)\psi\|+C\|[\mathscr{N}^{\varphi}_\hbar,\tau^6]\psi\|\,.\]
Computing explicitly the commutator, we get, by induction,
\[
\begin{split}
\|\tau^6\psi\|&\leq C\|\langle\tau\rangle^6(\mathscr{N}^{\varphi}_\hbar-z)\psi\|+C\|\mathscr{N}^{\varphi}_\hbar\psi\|\\
&\leq C\|\langle\tau\rangle^6(\mathscr{N}^{\varphi}_\hbar-z)\psi\|+C\|(\mathscr{N}^{\varphi}_\hbar-z)\psi\|+C|z|\|\psi\|\,.
\end{split}\]
With \eqref{eq.tau6tobecontrolled} and choosing $R$ large enough (to absorb the $C|z|$ term), we deduce the first estimate in Theorem \ref{thm.coercivity-2D}.
Combining this estimate with Proposition \ref{prop.controlxi2}, the conclusion follows.

\subsection{A slight improvement}
The considerations in the previous section give the following improvement of Theorem \ref{thm.coercivity-2D}.

\begin{corollary}\label{cor.tau}
	Under Assumption \ref{hyp.phi}, there exist $c, \hbar_0>0$ such that for all $\hbar\in(0,\hbar_0)$ and all $\psi\in\Dom(\mathscr{N}_\hbar^{\varphi})$,
\begin{equation}\label{eq.tauN}
c\hbar^2\|\langle\tau\rangle\psi\|\leq \|\langle\tau\rangle^6(\mathscr{N}^{\varphi}_\hbar-z)\psi\|
+\hbar^2\|\chi_0(\hbar^{-\frac 12}R^{-1}(\sigma-s_r)) \psi\|\,,
\end{equation}	
	and
\begin{equation}\label{eq.tauN2}
c\hbar^{2}\|\langle\tau\rangle\hbar^2 D_\sigma^2\psi\|\leq \|\langle\tau\rangle^6(\mathscr{N}^{\varphi}_\hbar-z)\psi\|
+\hbar^2\|\chi_0(\hbar^{-\frac 12}R^{-1}(\sigma-s_r)) \psi\|\,.
\end{equation}
\end{corollary}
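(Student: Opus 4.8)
The plan is to derive Corollary~\ref{cor.tau} directly from Theorem~\ref{thm.coercivity-2D} together with the weighted elliptic machinery assembled in Proposition~\ref{prop.controlxi2} and the Dirichlet-type bounds \eqref{eq.tauk}. The point is that Theorem~\ref{thm.coercivity-2D} already controls $\|\psi\|$ and $\|\hbar^2 D_\sigma^2\psi\|$ in terms of $\|\langle\tau\rangle^6(\mathscr{N}^\varphi_\hbar-z)\psi\|$ and the small localization term; to upgrade these to $\langle\tau\rangle\psi$ and $\langle\tau\rangle\hbar^2D_\sigma^2\psi$ one essentially repeats the argument with $\langle\tau\rangle$ (or just $\tau$) sitting in front, carefully pushing the weight through the commutators.

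First I would apply Theorem~\ref{thm.coercivity-2D} to the function $\tau\psi$ in place of $\psi$. Since $\tau\psi$ satisfies a Dirichlet condition at $\tau=0$, the estimate is still valid (this is the Dirichlet variant already invoked repeatedly, cf.\ Remark~\ref{rem.DirNeu} and \eqref{eq.tauk}), so
\[
c\hbar^2\|\tau\psi\|\leq \|\langle\tau\rangle^6(\mathscr{N}^{\varphi}_\hbar-z)(\tau\psi)\|
+\hbar^2\|\chi_0(\hbar^{-\frac 12}R^{-1}(\sigma-s_r))\tau\psi\|\,.
\]
Then I would expand $(\mathscr{N}^\varphi_\hbar-z)(\tau\psi)=\tau(\mathscr{N}^\varphi_\hbar-z)\psi+[\mathscr{N}^\varphi_\hbar,\tau]\psi$ and compute the commutator explicitly: from the expression of $\mathscr{N}^\varphi_\hbar$, $[\mathscr{N}^\varphi_\hbar,\tau]$ involves $\partial_\tau$ and a first-order term in $(\hbar D_\sigma-\tau)$, both with bounded (indeed $\tau$-localized) coefficients. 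The norms of $D_\tau\psi$ and $(\hbar D_\sigma-\tau)\psi$ that arise are controlled by $\|(\mathscr{N}^\varphi_\hbar-z)\psi\|+\|\psi\|$ via the basic elliptic estimates underlying Lemmas~\ref{lem.L2H1}--\ref{lem.H2bis}; the $\|\psi\|$-term is then absorbed by \eqref{eq.tauN} itself once $R$ is taken large enough. The localization term $\hbar^2\|\chi_0\cdot\tau\psi\|$ must be handled by noting that $\chi_0$ has compact support in $\sigma$ but this does \emph{not} bound $\tau$; however, on the support of $\chi_0$ the function $\tau\psi$ can be estimated again via \eqref{eq.tauk} applied locally, or more simply one keeps $\hbar^2\|\chi_0\cdot\langle\tau\rangle\psi\|$ as part of the right-hand side and re-runs the bootstrap — and in fact one observes that the localization term in \eqref{eq.tauN} is allowed to carry a $\langle\tau\rangle$ weight since the same commutator analysis that removed higher powers of $\tau$ in the proof of Theorem~\ref{thm.coercivity-2D} (using \eqref{eq.tauk} to write $\|\tau^k\psi\|\leq C\|(\mathscr{N}^\varphi_\hbar-z)\tau^k\psi\|$ and commuting) downgrades any $\langle\tau\rangle$-weighted $\chi_0$ term back to the unweighted one modulo $\|\langle\tau\rangle^6(\mathscr{N}^\varphi_\hbar-z)\psi\|$. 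Combining, one gets \eqref{eq.tauN}.

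For \eqref{eq.tauN2} I would combine \eqref{eq.tauN} with the second estimate of Theorem~\ref{thm.coercivity-2D} and with Proposition~\ref{prop.controlxi2}: the latter, applied after a frequency cutoff $\chi_2$ as in the proof of Theorem~\ref{thm.coercivity-2D}, gives control of $\tau\hbar D_\sigma\Op\chi_2\psi$ and $(\hbar D_\sigma)^2\Op\chi_2\psi$, while on the complementary frequency region $\{\chi_1(\xi)=\xi\}$ one uses $\mathscr{N}^\varphi_\hbar=\mathscr{P}_\hbar$ and the Grushin-based Theorem~\ref{thm.coercivity}, itself already producing $\hbar^2\|\psi\|$ with a $\hbar^2\|\tau^6\psi\|$ error that is reabsorbed by \eqref{eq.tauN} (with the $\langle\tau\rangle$ weight already available). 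Iterating the $\tau$-conjugation trick once more on $\hbar^2 D_\sigma^2\psi$, i.e.\ commuting $\langle\tau\rangle$ past $\hbar^2 D_\sigma^2$ (the commutator vanishes since $\tau$ and $D_\sigma$ commute) and applying \eqref{eq.tauN2}-without-weight to $\langle\tau\rangle\psi$, yields \eqref{eq.tauN2}.

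The main obstacle I expect is bookkeeping rather than anything conceptual: one must ensure that every time a weight $\langle\tau\rangle$ is introduced, the error terms it generates through commutators with $\mathscr{N}^\varphi_\hbar$ (which lose one power of $\tau$ and one derivative, but may temporarily produce $\tau^2$, $\tau^3$ on the right via the $c_\mu\kappa\tau^2$ terms in the operator) remain controllable by $\|\langle\tau\rangle^6(\mathscr{N}^\varphi_\hbar-z)\psi\|$ after applying \eqref{eq.tauk}, and that the repeated reabsorption of lower-order terms does not blow up the constant — this is why the exponent $6$ was chosen with room to spare in Theorem~\ref{thm.coercivity-2D}, and why one only claims a single extra power $\langle\tau\rangle$ on the left here. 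A secondary point requiring care is that the Dirichlet-boundary estimate \eqref{eq.tauk} is used for $\tau^k\psi$ with $k\ge1$, so all the weighted functions to which we apply the elliptic lemmas genuinely satisfy Dirichlet conditions, keeping Remark~\ref{rem.DirNeu} applicable throughout.
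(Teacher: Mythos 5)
Your plan rests on applying Theorem~\ref{thm.coercivity-2D} to the function $\tau\psi$. This is the main gap. The function $\tau\psi$ does \emph{not} lie in $\Dom(\mathscr{N}^\varphi_\hbar)$: if $\psi$ satisfies Neumann at $\tau=0$, then $\partial_\tau(\tau\psi)\big|_{\tau=0}=\psi(\cdot,0)$, which is in general nonzero, so $\tau\psi$ satisfies Dirichlet, not Neumann. You invoke Remark~\ref{rem.DirNeu} to cover this, but that remark only extends Lemmas~\ref{lem.L2H1}--\ref{lem.H2bis}, not Theorem~\ref{thm.coercivity-2D}. The proof of Theorem~\ref{thm.coercivity-2D} goes through the Grushin parametrix of Theorem~\ref{thm.Grushin}, which is built on the \emph{Neumann} realization of the de Gennes operator (that is where $\mu_1$, $v_\xi$ and the orthogonal projection $\Pi_\xi$ come from), so there is no justification for simply substituting a Dirichlet function. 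In fact, applying that theorem to a Dirichlet function would be going in exactly the wrong direction: for the Dirichlet realization the bottom of the spectrum is $\geq 1 > \Theta_0$, so $\mathscr{N}^\varphi_\hbar - z$ is already \emph{invertible} on the Dirichlet domain, and no Grushin analysis is needed. The paper exploits precisely this elementary positivity through the bound \eqref{eq.tauk}, $\|\tau\psi\|\leq C\|(\mathscr{N}^\varphi_\hbar-z)\tau\psi\|$, with $C$ uniform since $\Re z$ is close to $\Theta_0<1$. That single line replaces your appeal to Theorem~\ref{thm.coercivity-2D} on $\tau\psi$ and avoids the domain problem altogether.

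A second consequence of your route is the localization term. If you could apply Theorem~\ref{thm.coercivity-2D} to $\tau\psi$, you would get $\hbar^2\|\chi_0(\cdot)\tau\psi\|$ on the right-hand side, not the $\hbar^2\|\chi_0(\cdot)\psi\|$ that appears in \eqref{eq.tauN}, and $\chi_0$ does nothing to control $\tau$. Your proposed fix (re-run the bootstrap, or claim the localization term in \eqref{eq.tauN} is allowed to carry a $\langle\tau\rangle$ weight) is not a proof: the statement of the corollary has an unweighted localization term, and nothing in the paper downgrades $\hbar^2\|\chi_0\langle\tau\rangle\psi\|$ to $\hbar^2\|\chi_0\psi\|$. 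The paper sidesteps the problem by never producing a weighted localization term in the first place. Concretely, the actual proof runs: from \eqref{eq.tauk} with $k=1$ and the explicit commutator $[\mathscr{N}^\varphi_\hbar,\tau]$ (which involves $\partial_\tau$ and the magnetic derivative), one gets $\|\tau\psi\|\leq C\|\tau(\mathscr{N}^\varphi_\hbar-z)\psi\|+C\|\partial_\tau\psi\|+C\|\psi\|$; the term $\|\partial_\tau\psi\|$ is controlled by $\big(\|\mathscr{N}^\varphi_\hbar\psi\|\,\|\psi\|\big)^{1/2}$ by integration by parts; combining, with $z$ bounded, gives $\|\partial_\tau\psi\|+\|\tau\psi\|\leq C\|\langle\tau\rangle(\mathscr{N}^\varphi_\hbar-z)\psi\|+C\|\psi\|$; and then one feeds in the $\|\psi\|$-estimate of Theorem~\ref{thm.coercivity-2D} \emph{applied to $\psi$ itself} (which is legitimate, $\psi\in\Dom(\mathscr{N}^\varphi_\hbar)$), producing exactly \eqref{eq.tauN} with the unweighted $\chi_0$ term. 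Your argument for \eqref{eq.tauN2} has the same circularity issue (you want to apply an \enquote{\eqref{eq.tauN2}-without-weight} to $\langle\tau\rangle\psi$, which again is not in the Neumann domain), whereas the paper applies Proposition~\ref{prop.controlxi2} to $\tau\psi$ — which \emph{is} legitimate since that proposition goes through the Lemma chain covered by Remark~\ref{rem.DirNeu} — and then closes with \eqref{eq.tauN}, \eqref{eq.taudtaupsi}, \eqref{eq.dtauNpsipsi}, and \eqref{eq.tauk}.
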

\begin{proof}
We recall \eqref{eq.tauk}, and we use it with $k=1$. Estimating a commutator, this shows that
\[\|\tau\psi\|\leq C\|\tau(\mathscr{N}^{\varphi}_\hbar-z)\psi\|+C\|\partial_\tau\psi\|+C\|\psi\|\,.\]
We also notice that
\begin{equation}\label{eq.dtauNpsipsi}
\|\partial_\tau\psi\|^2\leq C\|\mathscr{N}^{\varphi}_\hbar\psi\|\|\psi\|\,.
\end{equation}
Then, since $z$ is bounded,
\begin{equation}\label{eq.taudtaupsi}
\|\partial_\tau\psi\|+\|\tau\psi\|\leq C\|\langle\tau\rangle(\mathscr{N}^{\varphi}_\hbar-z)\psi\|+C\|\psi\|\,.
\end{equation}
Applying Theorem \ref{thm.coercivity-2D}, we get \eqref{eq.tauN}.

To get \eqref{eq.tauN2}, we apply Proposition \ref{prop.controlxi2} with $\psi$ replaced by $\tau\psi$. Then, we estimate the commutators by using \eqref{eq.taudtaupsi}, \eqref{eq.dtauNpsipsi} (with $\psi$ replaced by $\tau^k\psi$, $k=1,2$) and \eqref{eq.tauk} (with $k=2$), and we use \eqref{eq.tauN}.
\end{proof}

\section{Optimal tangential Agmon estimates}\label{sec.6}

\subsection{Agmon estimates}
Let us discuss here some important consequences of our elliptic estimates. An immediate corollary of Theorem \ref{thm.coercivity-2D} is the following.
\begin{corollary}\label{cor.Agmons}
Under Assumption \ref{hyp.phi} and with the notation introduced in Section~\ref{sec.defonewell}, for all $K>0$, there exist $C, \hbar_0>0$ such that for all $\hbar\in(0,\hbar_0)$ and all $\lambda$ eigenvalue of $\mathscr{N}_
	{\hbar,r}$ such that $\left|\lambda-\left(\Theta_0-C_1\kappa_{\max}\hbar\right)\right|\leq K\hbar^2$ and all associated eigenfunction $\Psi\in\Dom(\mathscr{N}_{\hbar,r})$,
	\[\int_{\R^2_+}e^{2\varphi/\hbar^{\frac 12}}|\Psi|^2\dd s\dd\tau\leq C\|\Psi\|^2\,.\]
\end{corollary}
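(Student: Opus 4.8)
The plan is the standard weighted-estimate (Agmon) argument, with Theorem~\ref{thm.coercivity-2D} carrying all the analytic weight. Given an eigenpair $(\lambda,\Psi)$ of $\mathscr{N}_{\hbar,r}$ with $\lambda$ in the window $|\lambda-(\Theta_0-C_1\kappa_{\max}\hbar)|\le K\hbar^2$, I would use that, by construction, $\mathscr{N}^\varphi_\hbar=e^{\varphi/\hbar^{1/2}}\mathscr{N}_{\hbar,r}e^{-\varphi/\hbar^{1/2}}$, so that $\Psi^\varphi:=e^{\varphi/\hbar^{1/2}}\Psi$ satisfies $(\mathscr{N}^\varphi_\hbar-\lambda)\Psi^\varphi=0$. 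Applying Theorem~\ref{thm.coercivity-2D} with $z=\lambda$ (legitimate, since $\lambda$ lies in the admissible window) and $\psi=\Psi^\varphi$, the first term on the right-hand side drops out and one is left with
\[c\hbar^2\|\Psi^\varphi\|\le\hbar^2\bigl\|\chi_0\bigl(\hbar^{-1/2}R^{-1}(\sigma-s_r)\bigr)\Psi^\varphi\bigr\|\,,\]
so that everything reduces to estimating the cutoff term by $\|\Psi\|$.

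For that, I would note that on the support of $\chi_0(\hbar^{-1/2}R^{-1}(\cdot-s_r))$ one has $|\sigma-s_r|\le CR\hbar^{1/2}$, and --- after normalising $\varphi$ so that $\varphi(s_r)=0$ (the subsolutions relevant to the sequel vanish at the well; moreover Assumption~\ref{hyp.phi}(i) together with the non-degenerate maximum of $\kappa$ at $s_r$ forces $\varphi(\sigma)=O(R^2\hbar)$ there) --- the Lipschitz bound gives $e^{2\varphi(\sigma)/\hbar^{1/2}}\le e^{CR}$ on this set. Hence $\|\chi_0(\cdots)\Psi^\varphi\|\le e^{CR/2}\|\Psi\|$, which plugged into the previous display yields $\|\Psi^\varphi\|\le\tilde C\|\Psi\|$, i.e.\ the desired bound $\int_{\R^2_+}e^{2\varphi/\hbar^{1/2}}|\Psi|^2\,\dd s\,\dd\tau\le C\|\Psi\|^2$.

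The one genuinely delicate point, and the step I expect to be the main obstacle, is that applying Theorem~\ref{thm.coercivity-2D} to $\psi=\Psi^\varphi$ presupposes $\Psi^\varphi\in\Dom(\mathscr{N}^\varphi_\hbar)\subset L^2(\R^2_+)$ --- which is exactly what we want to prove. The remedy I would use is to first run the whole argument with $\varphi$ replaced by the bounded subsolution $\varphi_\epsilon:=\varphi/(1+\epsilon\varphi)$: one checks that $\varphi_\epsilon'=\varphi'/(1+\epsilon\varphi)^2$ satisfies $|\varphi_\epsilon'|\le|\varphi'|$ and $\varphi_\epsilon(s_r)=0$, so $\varphi_\epsilon$ still satisfies Assumption~\ref{hyp.phi} with the same constants, while $\varphi_\epsilon\le1/\epsilon$ is bounded; then $e^{\varphi_\epsilon/\hbar^{1/2}}$ is a bona fide bounded multiplier, $\Psi^{\varphi_\epsilon}\in\Dom(\mathscr{N}^{\varphi_\epsilon}_\hbar)$ with no circularity, and $(\mathscr{N}^{\varphi_\epsilon}_\hbar-\lambda)\Psi^{\varphi_\epsilon}=0$. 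The two paragraphs above then give $\int_{\R^2_+}e^{2\varphi_\epsilon/\hbar^{1/2}}|\Psi|^2\le C\|\Psi\|^2$ with $C$ independent of $\epsilon$, and since $\varphi_\epsilon\uparrow\varphi$ pointwise, Fatou's lemma finishes the proof after letting $\epsilon\to0$. The remaining checks --- the conjugation identity and the fact that every commutator error is already absorbed inside Theorem~\ref{thm.coercivity-2D} --- are routine.
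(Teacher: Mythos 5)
Your proposal follows the paper's proof exactly: the paper's entire argument is the one line ``apply Theorem~\ref{thm.coercivity-2D} with $z=\lambda$ and $\psi=e^{\varphi/\hbar^{1/2}}\Psi$,'' which is precisely your main step. The auxiliary points you supply --- the implicit normalisation $\varphi(s_r)=0$ (needed so that the factor $e^{\varphi/\hbar^{1/2}}$ on the support of $\chi_0$ is $\hbar$-uniformly bounded) and the regularisation $\varphi_\epsilon=\varphi/(1+\epsilon\varphi)$ to justify $e^{\varphi/\hbar^{1/2}}\Psi\in\Dom(\mathscr{N}^\varphi_\hbar)$ before Fatou --- are the standard details the paper leaves unstated, and your treatment of them is correct.
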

\begin{proof}
	We apply Theorem \ref{thm.coercivity-2D} with $z=\lambda$ and $\psi=e^{\varphi/\hbar^{\frac 12}}\Psi$.	
\end{proof}
Let us now explain how to get tangential Agmon estimates for the two wells operator $\mathscr{N}_{\hbar}$ from the estimates on the one well operators (acting on $L^2(\R\times\R_+)$). Let us recall the two Agmon distances
\[\begin{split}
\Phi_r(\sigma)&=\sqrt{\frac{2C_1}{\mu''_1(\xi_0)}}\int_{[s_r,\sigma]}\sqrt{\kappa_{\max}-\kappa_r(\tilde\sigma)}\dd\tilde\sigma\,,\\
\Phi_\ell(\sigma)&=\sqrt{\frac{2C_1}{\mu''_1(\xi_0)}}\int_{[s_\ell,\sigma]}\sqrt{\kappa_{\max}-\kappa_\ell(\tilde\sigma)}\dd\tilde\sigma\,.
\end{split}
\]
From them, we can construct a weight to estimate the decay of the eigenfunctions of $\mathscr{N}_{\hbar}$ away from $s_r$ and $s_\ell$. Let us consider some periodic versions of $\Phi_r$ and $\Phi_\ell$. We let
\begin{equation*}
\tilde\Phi_r(\sigma) = \left\{
\begin{array}{ll}
\Phi_r(\sigma)&\mbox{if }-L\leq \sigma\leq s_\ell-\eta
\\
\Phi_r(\sigma-2L)&\mbox{if } s_\ell+\eta<\sigma< L
\end{array}
\right.\,,
\end{equation*}
\begin{equation*}
\tilde\Phi_\ell(\sigma) = \left\{
\begin{array}{ll}
\Phi_\ell(\sigma+2L)&\mbox{if }-L\leq \sigma\leq s_r-\eta
\\
\Phi_\ell(\sigma)&\mbox{if } s_r+\eta<\sigma< L
\end{array}
\right.\,.
\end{equation*}
The functions $\tilde\Phi_r$ and $\tilde\Phi_\ell$ are defined on $[-L,L)$ but not on $(s_\ell-\eta,s_\ell+\eta]$ and $(s_r-\eta,s_r+\eta]$, respectively. Thus, we consider smooth extensions of $\tilde\Phi_r$ and $\tilde\Phi_\ell$ such that $\tilde\Phi_r>\tilde\Phi_\ell$ near $s_\ell$ and $\tilde\Phi_\ell>\tilde\Phi_r$ near $s_r$. Thus, $\tilde\Phi_r$ and $\tilde\Phi_\ell$ can be seen as $2L$-periodic functions.

For the following we shall identify functions on  $\Gamma$ with $2L$-periodic functions and mainly consider $[-L,L)$ as interval of integration. The preceding construction of weights on $[-L,L)$ is adapted to this point of view and will allow to give estimates on the two-well case in this setting.

\begin{proposition}\label{prop.Agmond}
Set $\theta\in(0,1)$ and consider the $2L$-periodic function defined on $[-L,L)$ by
\[\varphi=\sqrt{1-\theta}\min(\tilde\Phi_r,\tilde\Phi_\ell)\,.\]
Let $\varepsilon>0$ and assume that $\eta$ is small enough. There exist $C,\hbar_0>0$ such that for all $\hbar\in(0,\hbar_0)$ and all $\lambda$ eigenvalue of $\mathscr{N}_
{\hbar}$ such that $\left|\lambda-\left(\Theta_0-C_1\kappa_{\max}\hbar\right)\right|\leq K\hbar^2$ and all associated eigenfunction $u\in\Dom(\mathscr{N}_{\hbar})$,
\[\int_{[-L,L)\times\R_+}e^{2\varphi/\hbar^{\frac 12}}|u|^2\dd s\dd\tau\leq Ce^{\varepsilon/\hbar^{\frac 12}}\|u\|^2_{L^2([-L,L)\times\R^+)}\,.\]
\end{proposition}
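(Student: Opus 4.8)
\emph{Strategy.} The plan is to deduce the two-well estimate from the one-well coercivity estimate of Theorem~\ref{thm.coercivity-2D} (which concerns $\mathscr{N}^\varphi_{\hbar,r}$) and, via the symmetry $U$, from its left analogue, after cutting the eigenfunction $u$ away from the opposite well. The key preliminary observation is that $\varphi=\sqrt{1-\theta}\min(\tilde\Phi_r,\tilde\Phi_\ell)$ is itself an admissible subsolution in the sense of Assumption~\ref{hyp.phi} for the right-well operator (and, symmetrically, for the left one). Indeed, on $\{\tilde\Phi_r\leq\tilde\Phi_\ell\}$ one has $\kappa_r=\kappa$ and $\tfrac{\mu_1''(\xi_0)}{2}(\tilde\Phi_r')^2=\mathfrak{v}_r$, so $\mathfrak{v}_r-\tfrac{\mu_1''(\xi_0)}{2}\varphi'^2=\theta\mathfrak{v}_r\geq 0$, which is $\geq M\hbar$ outside an $R\hbar^{1/2}$-neighbourhood of $s_r$; on $\{\tilde\Phi_\ell<\tilde\Phi_r\}$ one has $\tfrac{\mu_1''(\xi_0)}{2}\varphi'^2=(1-\theta)\mathfrak{v}_\ell$, and $\mathfrak{v}_r\geq(1-\theta)\mathfrak{v}_\ell$, since near $s_\ell$ the modified barrier $\mathfrak{v}_r=C_1(\kappa_{\max}-\kappa_r)$ stays bounded away from $0$ while $\mathfrak{v}_\ell$ vanishes there — so the gap $\mathfrak{v}_r-\tfrac{\mu_1''(\xi_0)}{2}\varphi'^2$ is in fact bounded below by a positive constant in a full neighbourhood of $s_\ell$ (outside one period one simply extends $\varphi$ by $0$, where $\mathfrak{v}_r\equiv C_1\kappa_{\max}$). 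Hence Theorem~\ref{thm.coercivity-2D} applies to $\mathscr{N}^\varphi_{\hbar,r}$, the only localisation term being the one near $s_r$.

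\emph{Main step.} Then I would fix $\chi_r\in\mathscr{C}^\infty_0(I_{r,\eta})$ equal to $1$ on a large arc containing $\{\tilde\Phi_r\leq\tilde\Phi_\ell\}$ and such that $\mathrm{supp}\,\nabla\chi_r$ lies within distance $\eta$ of $s_\ell$; there $\varphi=\sqrt{1-\theta}\tilde\Phi_\ell=\mathscr{O}(\eta^2)$, because $\tilde\Phi_\ell$ vanishes to second order at the non-degenerate maximum $s_\ell$. Since $\kappa_r=\kappa$ on $\mathrm{supp}\,\chi_r$, the differential expressions of $\mathscr{N}_{\hbar,r,\gamma_0}$ and $\mathscr{N}_\hbar$ agree there, so $(\mathscr{N}_{\hbar,r,\gamma_0}-\lambda)(\chi_r u)=[\mathscr{N}_\hbar,\chi_r]u$ is supported in $\mathrm{supp}\,\nabla\chi_r$; being $\hbar$ times a first-order magnetic operator, it satisfies $\|\langle\tau\rangle^6[\mathscr{N}_\hbar,\chi_r]u\|\lesssim\hbar\|u\|$ by the normal Agmon estimates for $u$ (which give $\|\langle\tau\rangle^N u\|\lesssim\|u\|$ together with $\|\langle\tau\rangle^6(\hbar D_\sigma-\tau)u\|+\|\langle\tau\rangle^6 D_\tau u\|\lesssim\|u\|$). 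Recalling $\mathscr{N}_{\hbar,r}=e^{i\gamma_0\sigma/\hbar^2}\mathscr{N}_{\hbar,r,\gamma_0}e^{-i\gamma_0\sigma/\hbar^2}$, I apply Theorem~\ref{thm.coercivity-2D} to $\psi=e^{\varphi/\hbar^{1/2}}e^{i\gamma_0\sigma/\hbar^2}\chi_r u\in\Dom(\mathscr{N}^\varphi_{\hbar,r})$: the localisation term near $s_r$ is harmless because $\varphi=\sqrt{1-\theta}\tilde\Phi_r=\mathscr{O}(\hbar)$ on the scale $|\sigma-s_r|\lesssim R\hbar^{1/2}$, so it is $\lesssim\hbar^2\|u\|$, while the source term is $\lesssim e^{C\eta^2/\hbar^{1/2}}\hbar\|u\|$. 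This gives
\[
\hbar^2\bigl\|e^{\varphi/\hbar^{1/2}}\chi_r u\bigr\|\ \leq\ C\hbar\,e^{C\eta^2/\hbar^{1/2}}\|u\|\,,
\]
so that $\int e^{2\varphi/\hbar^{1/2}}\chi_r^2|u|^2\leq C\hbar^{-2}e^{2C\eta^2/\hbar^{1/2}}\|u\|^2\leq C\,e^{3C\eta^2/\hbar^{1/2}}\|u\|^2$ for $\hbar$ small. As $\chi_r=1$ on $\{\tilde\Phi_r\leq\tilde\Phi_\ell\}$, this controls $\int_{\{\tilde\Phi_r\leq\tilde\Phi_\ell\}}e^{2\varphi/\hbar^{1/2}}|u|^2$; running the symmetric argument with $\mathscr{N}_{\hbar,\ell}$ and $\chi_\ell=U\chi_r U^{-1}$ controls $\int_{\{\tilde\Phi_\ell\leq\tilde\Phi_r\}}e^{2\varphi/\hbar^{1/2}}|u|^2$. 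Summing the two and choosing $\eta$ so small that $3C\eta^2<\varepsilon$ yields the proposition.

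\emph{Main obstacle.} The one genuinely delicate point is the placement of the commutator error: one is forced to cut $\chi_r$ inside $\{\kappa_r=\kappa\}$, that is at distance $\gtrsim\eta$ from $s_\ell$, where the exponential weight already equals $e^{\mathscr{O}(\eta^2)/\hbar^{1/2}}$; this is precisely the origin of the loss $e^{\varepsilon/\hbar^{1/2}}$ in the statement. It is harmless only because $\tilde\Phi_\ell$, hence $\varphi$, vanishes quadratically at the non-degenerate well $s_\ell$, so the loss is governed by $\eta^2$, and because $\eta$ is a free small parameter; the polynomial factor $\hbar^{-2}$ produced when dividing the coercivity estimate by $\hbar^2$ is absorbed into the same $e^{\varepsilon/\hbar^{1/2}}$ factor, and any further polynomial-in-$\hbar$ loss in the weighted magnetic regularity of $u$ would be absorbed likewise.
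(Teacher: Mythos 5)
Your proposal is correct and follows essentially the same route as the paper's proof: truncate the eigenfunction with a tangential cut-off $\chi_r$ supported away from $s_\ell$, apply the one-well coercivity bound of Theorem~\ref{thm.coercivity-2D} to $e^{\varphi/\hbar^{1/2}}e^{i\gamma_0\sigma/\hbar^2}\chi_r u$, control the source term $\|\langle\tau\rangle^6 e^{\varphi/\hbar^{1/2}}[\mathscr{N}_\hbar,\chi_r]u\|$ via the normal Agmon estimates and the quadratic vanishing of $\tilde\Phi_\ell$ at $s_\ell$ (which is where the admissible $e^{\varepsilon/\hbar^{1/2}}$ loss comes from, with $\varepsilon\sim\eta^2$), and then use the symmetry for the left well. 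One small imprecision to fix: extending $\varphi$ by $0$ outside one period is not Lipschitz at the endpoints of $[-L,L)$ unless $\varphi$ vanishes there; since everything is localized by $\chi_r$, the cleaner statement (as in the paper) is that $\varphi$ may be freely modified and extended away from $\mathrm{supp}\,\chi_r$ so that Assumption~\ref{hyp.phi} holds on $\R$.
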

\begin{proof}
Consider an eigenfunction $u$ as in the assumptions. Let $\chi_{r}$ be a smooth cutoff function equal to $1$ near $s_r$ and being $0$ near $s_\ell$. Away from the support of $\chi_r$, $\varphi$ can be modified and extended to $\R\times\R^+$ so that Assumption \ref{hyp.phi} is satisfied. We can then consider $\psi=\chi_{r}e^{i\sigma\gamma_0/\hbar^2}e^{\varphi/\hbar^{\frac 12}}u$ as a function on $\R$ and apply to it Theorem \ref{thm.coercivity-2D} with $z=\lambda$. We get then
\begin{multline*}
c\hbar^2\|\chi_r e^{\varphi/\hbar^{\frac 12}}u\|\leq
\|e^{\varphi/\hbar^{1/2}}\langle\tau\rangle^6(\mathscr{N}_\hbar-\lambda)(\chi_r u)\|
+\hbar^2\|\chi_0(\hbar^{-\frac 12}R^{-1}(\sigma-s_r)) e^{\varphi/\hbar^{\frac 12}}u\|\,.
\end{multline*}
By using that $u$ is an eigenfunction and by choosing $\eta$ small enough (and adapting $\chi_r$ accordingly), we get the following estimate,
\begin{equation*}
c\hbar^2\|\chi_r e^{\varphi/\hbar^{\frac 12}} u\|\leq e^{\varepsilon/2\hbar^{\frac 12}}\|\langle\tau\rangle^6[\mathscr{N}_\hbar,\chi_r]u\|_{L^2([-L,L)\times\R^+)}
+C\hbar^2\|u\|_{L^2([-L,L)\times\R^+)}\,.
\end{equation*}
Thanks to the normal Agmon estimates, we get
\begin{equation*}
\|\chi_r e^{\varphi/\hbar^{\frac 12}} u\|\leq Ce^{\varepsilon/\hbar^{\frac 12}}\|u\|_{L^2([-L,L)\times\R^+)}\,.
\end{equation*}
By considering the left well, we get by symmetry
\begin{equation*}
\|\chi_\ell e^{\varphi/\hbar^{\frac 12}} u\|\leq Ce^{\varepsilon/\hbar^{\frac 12}}\|u\|_{L^2([-L,L)\times\R^+)}\,.
\end{equation*}
Since the supports of $\chi_r$ and $\chi_\ell$ overlap $[-L,L)$, the conclusion follows.
\end{proof}

\subsection{WKB approximation in the right well}\label{sec.apprx1well}
Let us now discuss a crucial application of Theorem \ref{thm.coercivity-2D}. We work here on the real line. Let us apply the theorem to
\begin{equation}\label{eq:u}
\psi=e^{\varphi/\hbar^{\frac 12}}(\psi_{\hbar,r}-\Pi_{r}\psi_{\hbar,r})\,,
\end{equation}
where
\begin{enumerate}[\rm ---]
	\item $\psi_{\hbar,r}(\sigma,\tau)=\chi_{\eta,r}\Psi_{\hbar,\tau}(\sigma,\tau)$,
	\item $\chi_{\eta,r}$ is a cut-off function supported in $I_{\eta,r}$ and such that $\chi_\eta=1$ on $I_{2\eta,r}$,
	\item $\Psi_{\hbar,r}$ is the WKB solution introduced in \eqref{eq.psir} (with $n=1$) and scaled so that $\|\psi_{\hbar,r}\|=1$ (see Remark \ref{rem.tildef}),
	\item $\Pi_r$ is the orthogonal projection on the first eigenspace, spanned by $u_{\hbar,r}$, of the operator $\mathscr{N}_{\hbar,r}$.
\end{enumerate}
Theorem~\ref{thm.coercivity-2D} and Corollary \ref{cor.tau} yield the following WKB approximations (see, for instance, \cite[Prop. 5.1]{HKR17} in the context of the Robin Laplacian for a similar estimate).
\begin{proposition}\label{prop:WKB-app}
We have
	\begin{equation}\label{eq.appWKB0}
	\|\psi_{\hbar,r}-\Pi_r\psi_{\hbar,r}\|_{L^2(\R^2_+)}=\mathscr{O}(\hbar^\infty)\,,
	\end{equation}
	and we can assume that $\langle\psi_{\hbar,r},u_{\hbar,r}\rangle=1+\mathscr{O}(\hbar^\infty)$ up to the multiplication of $\Psi_{\hbar,r}$ by a complex number of modulus $1$.
	
Moreover, let $K\subset I_{2\eta,r}$ be a compact set. The following estimate
	\begin{equation}\label{eq:app.WKB1'}
	\langle\tau\rangle e^{\Phi_{r}/\sqrt{\hbar}}(\Psi_{\hbar,r}-u_{\hbar,r})=\mathscr{O}(\hbar^\infty)\,,
	\end{equation}
	holds in $\mathscr{C}^1(K;L^2(\R_+))$.
\end{proposition}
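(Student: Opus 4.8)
The plan is to first establish \eqref{eq.appWKB0} by a standard quasimode argument, and then to bootstrap it — via the elliptic estimates of Theorem~\ref{thm.coercivity-2D} and Corollary~\ref{cor.tau} — into the weighted $\mathscr{C}^1$ comparison \eqref{eq:app.WKB1'}.

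First I would check that $\psi_{\hbar,r}=\chi_{\eta,r}\Psi_{\hbar,r}$ is a very accurate quasimode for $\mathscr{N}_{\hbar,r}$. By Theorem~\ref{thm.BKW} with $n=1$ one has $(\mathscr{N}_{\hbar,r}-\delta_1(\hbar))\Psi_{\hbar,r}=\mathscr{O}(\hbar^\infty)e^{-\Phi_r/\hbar^{1/2}}$, and inserting the cut-off adds only the commutator $[\mathscr{N}_{\hbar,r},\chi_{\eta,r}]\Psi_{\hbar,r}$, which is $\mathscr{O}(e^{-c/\hbar^{1/2}})$ because $\chi_{\eta,r}'$ is supported at a positive distance from $s_r$, where $\Phi_r$ is bounded below. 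Hence $\|(\mathscr{N}_{\hbar,r}-\delta_1(\hbar))\psi_{\hbar,r}\|=\mathscr{O}(\hbar^\infty)$ while $\|\psi_{\hbar,r}\|=1$. The Grushin analysis of the preceding sections, together with the expansions $\delta_n(\hbar)$ of Theorem~\ref{thm.BKW}, shows that the eigenvalues of $\mathscr{N}_{\hbar,r}$ below $\Theta_0$ are simple and separated by a gap of order $\hbar^{3/2}$ and that $\delta_1(\hbar)$ lies near the bottom of the spectrum; so the spectral point closest to $\delta_1(\hbar)$ is the ground-state eigenvalue $\nu_{1,r}(\hbar)$, with $|\nu_{1,r}(\hbar)-\delta_1(\hbar)|=\mathscr{O}(\hbar^\infty)$, and the usual quasimode--projection inequality yields $\|\psi_{\hbar,r}-\Pi_r\psi_{\hbar,r}\|=\mathscr{O}(\hbar^\infty)$, which is \eqref{eq.appWKB0}. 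In particular $|\langle\psi_{\hbar,r},u_{\hbar,r}\rangle|^2=1-\mathscr{O}(\hbar^\infty)$, and after multiplying $\Psi_{\hbar,r}$ by a suitable unimodular constant we obtain $\langle\psi_{\hbar,r},u_{\hbar,r}\rangle=1+\mathscr{O}(\hbar^\infty)$.

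Next, with $w:=\psi_{\hbar,r}-\Pi_r\psi_{\hbar,r}$, I would apply Theorem~\ref{thm.coercivity-2D} and Corollary~\ref{cor.tau} to $\psi=e^{\varphi/\hbar^{1/2}}w$, with $z=\nu_{1,r}(\hbar)$ and $\varphi$ a subsolution satisfying Assumption~\ref{hyp.phi} with $\varphi\leq\Phi_r$ (for instance $\varphi=(1-\epsilon)\Phi_r$). Since $\Pi_r\psi_{\hbar,r}$ lies in the corresponding eigenspace, $(\mathscr{N}_\hbar^{\varphi}-z)\psi=e^{\varphi/\hbar^{1/2}}(\mathscr{N}_{\hbar,r}-z)\psi_{\hbar,r}$, which by Step~1 is $\mathscr{O}(\hbar^\infty)\langle\tau\rangle^{-6}$ in $L^2$ (the factor $e^{\varphi/\hbar^{1/2}}$ being swallowed by $e^{-\Phi_r/\hbar^{1/2}}$ since $\varphi\leq\Phi_r$, and by the exponentially small commutator on $\operatorname{supp}\chi_{\eta,r}'$); moreover the localization term $\chi_0(\hbar^{-1/2}R^{-1}(\sigma-s_r))e^{\varphi/\hbar^{1/2}}w$ is $\mathscr{O}(\|w\|)=\mathscr{O}(\hbar^\infty)$ because $\varphi=\mathscr{O}(\hbar)$ on $B(s_r,R\hbar^{1/2})$. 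This gives $\|e^{\varphi/\hbar^{1/2}}\langle\tau\rangle w\|$ and $\|e^{\varphi/\hbar^{1/2}}\langle\tau\rangle\hbar^2 D_\sigma^2 w\|$ both $\mathscr{O}(\hbar^\infty)$ for every such $\varphi$. On the compact $K\subset I_{2\eta,r}$ one has $\chi_{\eta,r}\equiv 1$, hence $\psi_{\hbar,r}=\Psi_{\hbar,r}$ and $\Pi_r\psi_{\hbar,r}=(1+\mathscr{O}(\hbar^\infty))u_{\hbar,r}$ there, so $\Psi_{\hbar,r}-u_{\hbar,r}=w+\mathscr{O}(\hbar^\infty)u_{\hbar,r}$: the $w$ part is controlled by the above, and the $u_{\hbar,r}$ part by the optimal tangential Agmon estimate of Corollary~\ref{cor.Agmons}, whose constant is at most polynomial in $\hbar^{-1}$ and thus beaten by the $\mathscr{O}(\hbar^\infty)$ prefactor. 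Finally, upgrading the resulting $L^2(K\times\R_+)$ bound to the claimed $\mathscr{C}^1(K;L^2(\R_+))$ bound uses the tangential $\hbar^2 D_\sigma^2$-control of Corollary~\ref{cor.tau} for $w$ and elliptic regularity for the eigenfunction $u_{\hbar,r}$, plus the one-dimensional embedding $H^2\hookrightarrow\mathscr{C}^1$ in the $\sigma$-variable; commuting $e^{\Phi_r/\hbar^{1/2}}$ past $\partial_\sigma$ costs only $\mathscr{O}(\hbar^{-1/2})$, which is absorbed.

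The step I expect to be the main obstacle is the passage, in the weighted estimates above, from a subsolution strictly below $\Phi_r$ to the optimal Agmon weight $\Phi_r$ itself: Assumption~\ref{hyp.phi} forces a genuine $\hbar$-sized gap in the eikonal inequality, so $\Phi_r$ is not directly admissible, and one must verify that the limiting procedure $\varphi\nearrow\Phi_r$ (keeping the constants in Theorem~\ref{thm.coercivity-2D} and in the Agmon estimate under control, via a suitably chosen family of subsolutions) does not destroy the $\mathscr{O}(\hbar^\infty)$ decay — this is the classical Helffer--Sj\"ostrand sharpening of Agmon estimates, carried out here in the operator-valued-symbol setting. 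The remaining ingredients — the spectral gap, the commutator bookkeeping, and the $\mathscr{C}^1$ upgrade — are routine.
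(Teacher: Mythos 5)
Your strategy for \eqref{eq.appWKB0} (quasimode from Theorem~\ref{thm.BKW}, commutator of $\mathscr{N}_{\hbar,r}$ with $\chi_{\eta,r}$ exponentially small because $\Phi_r>0$ on $\operatorname{supp}\chi_{\eta,r}'$, spectral gap of order $\hbar^{3/2}$ from \eqref{eq.HM}, spectral theorem) is exactly the one the paper uses, and it is correct. Your strategy for \eqref{eq:app.WKB1'} — apply Theorem~\ref{thm.coercivity-2D} and Corollary~\ref{cor.tau} to $e^{\varphi/\hbar^{1/2}}(\psi_{\hbar,r}-\Pi_r\psi_{\hbar,r})$, then split $\Psi_{\hbar,r}-u_{\hbar,r}=w+\mathscr{O}(\hbar^\infty)u_{\hbar,r}$ on $K$, then upgrade to $\mathscr{C}^1$ via the $\hbar^2 D_\sigma^2$-control — is also the right one. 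You correctly identify the main obstacle and even name it.

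Where the proposal falls short is exactly at the step you flag: the choice of admissible weight. Taking $\varphi=(1-\epsilon)\Phi_r$ and then \enquote{letting $\epsilon\to 0$} does not work, because Assumption~\ref{hyp.phi} gives the key coercivity only at scale $|\sigma-s_r|\gtrsim R\hbar^{1/2}$ with an $R$ that blows up as $\epsilon\to 0$, and with it the constant $C_R$ in Theorem~\ref{thm.coercivity}/\ref{thm.coercivity-2D}; so the resulting bound is $\mathscr{O}_\epsilon(\hbar^\infty)$ with no uniformity in $\epsilon$, which does not yield \eqref{eq:app.WKB1'}. The paper closes this gap by specifying the weight: it takes, instead of $(1-\epsilon)\Phi_r$, the Helffer--Sj\"ostrand logarithmically corrected weight
\[
\tilde\Phi_{r,N,\hbar}(s)=\Phi_r(s)-N\sqrt{\hbar}\,\ln\max\!\left(\frac{\Phi_r(s)}{\sqrt{\hbar}},N\right),
\]
capped by a $\sqrt{1-\theta}$-scaled Agmon-distance term so as to remain a genuine subsolution on the support of $\chi_{\eta,r}'$ (formulas \eqref{eq.poids3}--\eqref{eq.poids2}). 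This $\varphi$ satisfies Assumption~\ref{hyp.phi} with $R\simeq N$ \emph{fixed}, and $e^{\varphi/\hbar^{1/2}}=e^{\Phi_r/\hbar^{1/2}}\max(\Phi_r/\sqrt\hbar,N)^{-N}$ away from $s_r$, so the resulting estimate is the $\Phi_r$-weighted one times a factor $\hbar^{N/2}$; letting $N\to\infty$ gives the $\mathscr{O}(\hbar^\infty)$ claim. The same weight is what makes your assertion about Corollary~\ref{cor.Agmons} (\enquote{constant at most polynomial in $\hbar^{-1}$} when the weight is pushed to $\Phi_r$) true. So: your diagnosis is correct, and you name the right tool, but the concrete family of subsolutions — the one piece of content the paper actually supplies in its proof — is missing, and your substitute family $(1-\epsilon)\Phi_r$ would not do the job.
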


\begin{remark} Note that the choice $\langle\psi_{\hbar,r},u_{\hbar,r}\rangle=1+\mathscr{O}(\hbar^\infty)$, in addition to the normalization of $\psi_{\hbar,r}$,  completely determines the quasimode, especially  the value of $\alpha_{1,0}$ which was only defined up to an additive constant in \eqref{eq.alphanprime}.
\end{remark}

\begin{proof}
To get \eqref{eq.appWKB0}, we remember that the expansion of the first eigenvalue is given by $\delta_1(\hbar)$ in Theorem \ref{thm.BKW}, we use the spectral gap of the one well case (see for instance \eqref{eq.HM}) and we apply the spectral theorem.
	
To get \eqref{eq:app.WKB1'}, we use our normalization of the WKB Ansatz, and we choose
	\begin{equation}\label{eq.poids3}
	\varphi(s)=\hat{\Phi}_{r, \eta, N,\hbar} (s) = \min \left\{\tilde{\Phi}_{r, N, \hbar} (s), \sqrt{1-\theta }\,\displaystyle{\inf_{\sigma \in I_{2\eta,r}\setminus I_{\eta,r} } \left(\Phi_{r}(\sigma) +\int_{[s,\sigma]}\sqrt{ V (\tilde\sigma)}\, \dd\tilde\sigma\right)}\right\},
	\end{equation}
	where
	\begin{equation}\label{eq.poids2}
	\tilde{\Phi}_{r, N, \varepsilon} (s)=\Phi_{r}(s)-N\sqrt{\hbar}\,\ln\max\left(\frac{\Phi_{r}}{\sqrt\hbar},N\right)\,.
	\end{equation}
	Here $N\in\N$, $0<\theta<1$.
	\end{proof}

\section{Interaction matrix and tunneling effect}\label{sec.7}
We now have all the elements in hand to prove Theorem \ref{thm.tunnel}. Let us consider the common \enquote{single well} groundstate energy $\mu_1^{\rm sw}(\hbar)$ of the operators $\mathscr{N}_{\hbar,r}$ and $\mathscr{N}_{\hbar,\ell}$ (it depends on $\eta$). It results from the Agmon estimates in Corollary \ref{cor.Agmons} and Proposition \ref{prop.Agmond}, and the min-max principle that
\begin{equation}\label{eq:sp-rough}
\mu_1^{\rm sw}(\hbar)-\tilde{\mathscr O}(e^{-\mathsf{S}/\sqrt{\hbar} })\leq \nu_1(\hbar)\leq \nu_2(\hbar)\leq \mu_1^{\rm sw}(\hbar)+\tilde{\mathscr O}(e^{-\mathsf{S}/\sqrt{\hbar}})\,,
\end{equation}
where $\tilde{\mathscr O}(e^{-\mathsf{S}/\sqrt{\hbar}})$  means ${\mathscr O}(e^{-(\mathsf{S}-\varepsilon)/\sqrt{\hbar}})$ for all $\varepsilon>0$.

\subsection{WKB quasimodes and approximated basis}\label{sec.redinteract}
In this section, we recall the main lines of the strategy to reduce the asymptotic study of the spectral gap $\nu_2(\hbar)-\nu_1(\hbar)$ to the study of the two by two interaction matrix. \emph{Once the tangential exponential decay of the eigenfunctions is established} (see Proposition \ref{prop.Agmond}), the derivation of the interaction matrix can be done as if we were in dimension one. In this section, one will precisely refer to the estimates obtained in \cite[Section 3]{BHR17} where the strategy has been described in great detail for an electric Hamiltonian in dimension one (see also the Bourbaki exposé \cite[Section 2]{Robert} describing the Helffer-Sjöstrand results in \cite{HS84}).

To construct the interaction matrix, we will use the ground states of the one well problems and use them to provide an approximate basis of the space
\[E=\bigoplus_{i=1}^2\,{\rm Ker}(\mathscr{N}_\hbar-\nu_i(\hbar))\,.\]
We will truncate them, project them on $E$ and orthonormalize them.

\subsubsection{Truncation}
Let $\chi_{\eta,r}$ (respectively $\chi_{\eta,\ell}$) be a cut-off function satisfying
$\chi_{\eta,r}=1$ in $\{|s-s_{\ell}|\geq 2\eta\}$ (respectively $\chi_{\eta,\ell}=1$ in $\{|s-s_r| \geq 2\eta\}$) and
$\chi_{\eta,r}=0$ in $\{|s-s_{\ell}|\leq \eta\}$ (respectively $\chi_{\eta,\ell}=0$ in $\{|s-s_r| \leq \eta\}$).

We define, for $\alpha\in\{\ell,r\}$,
\begin{equation}\label{eq:int-qm}
f_{\hbar,\alpha}=\chi_{\eta,\alpha}\phi_{\hbar,\alpha}\,,
\end{equation}
where the $\phi_{\hbar,\alpha}$ are essentially the functions $\check\phi_{\hbar,\alpha}$ (see \eqref{eq.phir0} and \eqref{eq.phil0}) seen on the circle identified with $[-L,L)$, and precisely defined by
\begin{equation}\label{eq.phir}
\phi_{\hbar, r}(\sigma,\tau) = \left\{
\begin{array}{ll}
e^{ -i\gamma_0 \sigma/\hbar^2}u_{\hbar,r}(\sigma,\tau)&\mbox{if }-L\leq \sigma\leq s_\ell-\frac{\eta}{2}
\\
e^{ -i\gamma_0 (\sigma-2L)/\hbar^2}u_{\hbar,r}(\sigma-2L,\tau)&\mbox{if } s_\ell+\frac{\eta}{2}<\sigma< L
\end{array}
\right.\,,
\end{equation}
\begin{equation}\label{eq.phil}
\phi_{\hbar, \ell}(\sigma,\tau) = \left\{
\begin{array}{ll}
e^{-i\gamma_0 (\sigma+2L)/\hbar^2}u_{\hbar,\ell}(\sigma+2L,\tau)&\mbox{if }-L\leq \sigma\leq s_r-\eta/2
\\
e^{-i\gamma_0 \sigma/\hbar^2}u_{\hbar,\ell}(\sigma,\tau)&\mbox{if } s_r+\eta/2<\sigma< L
\end{array}
\right.\,.
\end{equation}
Note here that, due to the flux term $\gamma_0 \sigma/\hbar^2$, there is no  natural extension by periodicity. In the following, we work on $\Gamma$ identified with $[-L,L)$.

Thanks to Proposition \ref{prop.Agmond}, the set $\{f_{\hbar,\ell},f_{\hbar,r}\}$ is quasi-orthonormal in the sense that
\[\|f_{\hbar,\alpha}\|^2=1+\tilde{\mathscr O}(e^{-2\mathsf{S}/\sqrt{\hbar}})\quad{\rm and}\quad
\langle f_{\hbar,\alpha},f_{\hbar,\beta}\rangle=\tilde{\mathscr O}(e^{-\mathsf{S}/\sqrt{\hbar}})~{\rm for~}\alpha\not=\beta\,.\]
Furthermore, the function $r_{\hbar,\alpha}=(\mathscr{N}_{\hbar}-\mu^{\rm sw}(\hbar))f_{\hbar,\alpha}$, $\alpha\in\{\ell,r\}$, satisfies,
\[\|r_{\hbar,\alpha}\|=\tilde{\mathscr O}(e^{-\mathsf{S}/\sqrt{\hbar}})\,.\]
These estimates, in dimension one, are proved, for instance, in \cite[Lemma 3.5]{BHR17}.
\subsubsection{Projection}
Since we want to describe the first two eigenvalues of $\mathscr{N}_\hbar$, it is convenient to build a basis of $E$ from the quasimodes $f_{\hbar,r}$ and $f_{\hbar,\ell}$.
Thus, we consider the new quasimodes, for $\alpha\in\{\ell,r\}$,
\begin{equation}\label{eq:int-qm*}
g_{\hbar,\alpha}=\Pi f_{\hbar,\alpha}\,,
\end{equation}
where $\Pi$ is the orthogonal projection on $E$. The following estimate holds, for $\alpha\in\{\ell,r\}$,
\[\|g_{\hbar,\alpha}-f_{\hbar,\alpha}\|+\left\|\partial_s\left(g_{\hbar,\alpha}-f_{\hbar,\alpha}\right)\right\|
=\tilde{\mathscr O}(e^{-\mathsf{S}/\sqrt{\hbar}})\,,\]
and its proof is the same as the one of \cite[Lemma 3.8]{BHR17}.
\subsubsection{Orthonormalization}
Starting from the basis $\{g_{\hbar,\ell},g_{\hbar,r}\}$, we obtain by the Gram-Schmidt algorithm the orthonormal basis $\{\tilde g_{\hbar,\ell},\tilde g_{\hbar,r}\}$. In other words, we have $\tilde g=g\mathsf{G}^{-\frac12}$ where $\mathsf{G}$ is the Gram-Schmidt matrix $(\langle g_{\hbar,\alpha},g_{\hbar,\beta}\rangle)_{\alpha,\beta\in\{r,\ell\}}$.

Let $\mathsf M$ be the matrix of $\mathscr{N}_\hbar$ in the basis $\{\tilde g_{\hbar,\ell},\tilde g_{\hbar,r}\}$. We have
\[\mathrm{Spec}(\mathsf M)=\{\nu_1(\hbar),\nu_2(\hbar)\}\]
and, by solving the equation ${\rm det}(\mathsf M-\lambda{\mathrm Id})=0$, we deduce, as in dimension one (see \cite[Proposition 3.11]{BHR17}), that
\begin{equation}\label{eq.tunnel0}
\nu_2(\hbar)-\nu_1(\hbar)=2|w_{\ell,r}| +\tilde{\mathscr O}(e^{-2\mathsf{S}/\sqrt{\hbar}})\,,\quad w_{\ell,r}=\langle r_{\hbar,\ell},f_{\hbar,r}\rangle\,.
\end{equation}

\subsection{Computing the interaction}
We may now estimate the interaction term $w_{\ell,r}$. In contrast with Section \ref{sec.redinteract}, we provide here more details since the proof deviates from the usual computation of the interaction in dimension one. Firstly, the tangential derivative $\hbar D_\sigma$ is replaced by the magnetic derivative
\begin{equation}\label{eq.Dh}
\mathscr{D}_\hbar=\hbar D_\sigma+\hbar^{-1}\gamma_0-\tau+\hbar c_\mu \frac{\kappa}{2}\tau^2\,.
\end{equation}
Lemma \ref{lem.ultime} gives an explicit formula for $w_{\ell,r}$ involving this magnetic derivative. Secondly, the separation of variables, responsible for the final reduction to an interaction in dimension one, is explained in Section \ref{sec.fin}.
\subsubsection{An explicit formula for $w_{\ell,r}$}
The aim of this section is to prove the following.
\begin{lemma}\label{lem.ultime}
We have
\begin{multline}\label{eq.ultime}
w_{\ell,r}=
i\hbar\int_{0}^{+\infty}a_\hbar^{-1}\left(\phi_\ell\overline{\mathscr{D}_\hbar\phi_r}+\mathscr{D}_\hbar\phi_\ell\overline{\phi_r}\right)(0,\tau)\\
-a_\hbar^{-1}\left(\phi_\ell\overline{\mathscr{D}_\hbar\phi_r}+\mathscr{D}_\hbar\phi_\ell\overline{\phi_r}\right)(-L,\tau)\dd\tau\,.
\end{multline}	
\end{lemma}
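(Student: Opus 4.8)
The plan is to turn the identity $w_{\ell,r}=\langle r_{\hbar,\ell},f_{\hbar,r}\rangle$ (with respect to the weighted scalar product for which $\mathscr{N}_\hbar$ is self-adjoint, i.e.\ with density $a_\hbar$) into a boundary integral over $\mathcal{A}\times\R_+$, where $\mathcal{A}$ is the arc of $\Gamma$ running from $-L$ to $0$, that is, the connected component of $\Gamma\setminus\{0,-L\}$ containing $s_r$ but not $s_\ell$. First I would record the locality facts, which is where all the bookkeeping sits. For $\eta$ small, unwinding the surgeries of Section~\ref{sec.defonewell}: on $\{|\sigma-s_r|\geq\eta\}$ one has $\kappa_\ell=\kappa$, so $\mathscr{N}_\hbar$ coincides there with $\mathscr{N}_{\hbar,\ell,\gamma_0}$, while $\phi_{\hbar,\ell}$ coincides with the ground state $\check\phi_{\hbar,\ell}$ of $\mathscr{N}_{\hbar,\ell,\gamma_0}$ (the $2L$-shift built into \eqref{eq.phil} is harmless because the coefficients of $\mathscr{N}_\hbar$ are $2L$-periodic and $\mathscr{N}_{\hbar,\ell,\gamma_0}$ agrees with $\mathscr{N}_\hbar$ on the image of that shift). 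Hence $(\mathscr{N}_\hbar-\mu^{\rm sw}(\hbar))\phi_{\hbar,\ell}=0$ on $\{|\sigma-s_r|\geq\eta\}$, so that $r_{\hbar,\ell}=[\mathscr{N}_\hbar,\chi_{\eta,\ell}]\phi_{\hbar,\ell}$ is supported in the annulus $\{\eta\leq|\sigma-s_r|\leq 2\eta\}$, which for $\eta$ small lies inside $\mathcal{A}$ and avoids $\mathrm{supp}(1-\chi_{\eta,r})$; thus $f_{\hbar,r}=\phi_{\hbar,r}$ there and $w_{\ell,r}=\int_{\mathcal{A}\times\R_+}r_{\hbar,\ell}\,\overline{\phi_{\hbar,r}}\,a_\hbar\,\dd\sigma\,\dd\tau$. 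Symmetrically, on all of $\mathcal{A}$ one has $\phi_{\hbar,r}=\check\phi_{\hbar,r}$ and $\mathscr{N}_\hbar=\mathscr{N}_{\hbar,r,\gamma_0}$, hence $(\mathscr{N}_\hbar-\mu^{\rm sw}(\hbar))\phi_{\hbar,r}=0$ on $\mathcal{A}$.

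Next I would apply a Green formula on $\mathcal{A}\times\R_+$ to the operator in the form $\mathscr{N}_\hbar=-a_\hbar^{-1}\partial_\tau a_\hbar\partial_\tau+a_\hbar^{-1}\mathscr{D}_\hbar a_\hbar^{-1}\mathscr{D}_\hbar$, with $\mathscr{D}_\hbar$ the magnetic derivative \eqref{eq.Dh}. The transverse part $-a_\hbar^{-1}\partial_\tau a_\hbar\partial_\tau$ is symmetric for the density $a_\hbar\,\dd\sigma\,\dd\tau$ with no contribution from $\tau=0$ (Neumann condition) nor $\tau=+\infty$ (exponential decay of the $\phi$'s), so only the tangential part produces boundary terms, located at $\sigma=0$ and $\sigma=-L$. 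Using the elementary identity
\[
\int_{-L}^{0}(\mathscr{D}_\hbar f)\,\bar g\,\dd\sigma=\int_{-L}^{0}f\,\overline{\mathscr{D}_\hbar g}\,\dd\sigma-i\hbar\,\big[f\bar g\big]_{\sigma=-L}^{\sigma=0}\,,
\]
valid because $\mathscr{D}_\hbar=\hbar D_\sigma+m$ with $m$ real, applied twice (first with $f=a_\hbar^{-1}\mathscr{D}_\hbar u$, then with $f=\mathscr{D}_\hbar u$), one gets for smooth $u,v$ satisfying the Neumann condition a Wronskian-type boundary expression in $\mathscr{D}_\hbar$; the cross terms coming from expanding $\mathscr{D}_\hbar a_\hbar^{-1}\mathscr{D}_\hbar$ are absorbed automatically because one never integrates the bare $\sigma$-derivatives.

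Finally I would use this with $u=\chi_{\eta,\ell}\phi_{\hbar,\ell}$ and $v=\phi_{\hbar,r}$: since $(\mathscr{N}_\hbar-\mu^{\rm sw}(\hbar))\phi_{\hbar,r}=0$ on $\mathcal{A}$, the interior term $\langle u,\mathscr{N}_\hbar v\rangle$ equals $\mu^{\rm sw}(\hbar)\langle u,v\rangle$ on $\mathcal{A}\times\R_+$ and cancels the energy shift, so $w_{\ell,r}=\langle(\mathscr{N}_\hbar-\mu^{\rm sw}(\hbar))u,v\rangle$ reduces to the boundary term alone. Because $\chi_{\eta,\ell}\equiv1$ on neighbourhoods of $\sigma=0$ and of $\sigma=-L$ (once more for $\eta$ small), $u$ and $\mathscr{D}_\hbar u$ may be replaced there by $\phi_{\hbar,\ell}$ and $\mathscr{D}_\hbar\phi_{\hbar,\ell}$, and after fixing the orientation of $\mathcal{A}$ this is exactly \eqref{eq.ultime}.

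The only genuinely delicate step is the first paragraph: one has to keep track of the extensions $\kappa_r,\kappa_\ell$, the piecewise formulas \eqref{eq.phir}--\eqref{eq.phil} for $\phi_{\hbar,r},\phi_{\hbar,\ell}$ with their $2L$-shifts, and the cut-offs $\chi_{\eta,r},\chi_{\eta,\ell}$, and verify that for $\eta$ small all the local-agreement statements really hold on the stated sets. It is precisely the $2L$-shift in \eqref{eq.phil} that makes the contributions of $\phi_{\hbar,\ell}\overline{\phi_{\hbar,r}}$ at $\sigma=-L$ and at $\sigma=0$ differ by the flux factor $e^{\mp 2iL\gamma_0/\hbar^2}$, which is the source of the oscillation in Theorem~\ref{thm.tunnel}; locating this phase on the correct endpoint is the subtle point, while the Green formula itself is routine.
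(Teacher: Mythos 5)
Your proposal is correct and amounts to the same Wronskian computation as the paper, but compressed into a single Green identity. The paper's route is: rewrite $w_{\ell,r}=\langle[\mathscr{N}_\hbar,\chi_{\eta,\ell}]\phi_{\hbar,\ell},\chi_{\eta,r}\phi_{\hbar,r}\rangle$, integrate by parts once over the whole circle $(-L,L)$ (no boundary terms thanks to the $2L$-periodicity of $\phi_\ell\overline{\phi_r}$) and invoke Leibniz with $\chi'_\ell\chi'_r=0$, $\chi'_\ell\chi_r=\chi'_\ell$ to isolate an integral weighted by $\chi'_\ell$; then gauge $\tilde\phi_\alpha=e^{i\gamma/\hbar}\phi_\alpha$ to trade the magnetic derivative $\mathscr{D}_\hbar$ for $\hbar D_\sigma$, integrate by parts a second time over $S_r=(-L,0)\times\R_+$, and finally check that the remaining interior term $\tilde w_{\ell,r}$ vanishes because $\phi_\ell$ and $\phi_r$ are both eigenfunctions at level $\mu_1^{\rm sw}(\hbar)$. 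You reach the same boundary expression by a single symmetric Green formula for $a_\hbar^{-1}\mathscr{D}_\hbar a_\hbar^{-1}\mathscr{D}_\hbar$ directly on $\mathcal{A}\times\R_+$, having first established the locality facts that make this legitimate: $r_{\hbar,\ell}=[\mathscr{N}_\hbar,\chi_{\eta,\ell}]\phi_{\hbar,\ell}$ is supported in $\{\eta\le|\sigma-s_r|\le2\eta\}\subset\mathcal{A}$ where $\chi_{\eta,r}\equiv1$; $(\mathscr{N}_\hbar-\mu_1^{\rm sw})\phi_{\hbar,r}=0$ on all of $\mathcal{A}$ because $\kappa_r=\kappa$ and $\phi_{\hbar,r}=\check\phi_{\hbar,r}$ there; and $\chi_{\eta,\ell}\equiv1$ near $\sigma\in\{0,-L\}$. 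With these, the interior term is killed outright by $(\mathscr{N}_\hbar-\mu_1^{\rm sw})\phi_r=0$ and the gauge change becomes unnecessary — the boundary integrand is automatically in the bilinear form in $\mathscr{D}_\hbar$. The resulting formula agrees with \eqref{eq.ultime} up to an overall sign depending on how one orients $\mathcal{A}$ and writes the integration-by-parts identity; this is immaterial, since only $|w_{\ell,r}|$ enters \eqref{eq.tunnel0}. You also correctly locate the source of the flux phase: at $\sigma=-L$ the two branches of \eqref{eq.phir} and \eqref{eq.phil} are active with opposite $2L$-shifts, producing the factor $e^{-2iL\gamma_0/\hbar^2}$ in $w_{\ell,r}^d$ that the paper extracts in Section \ref{sec.fin}. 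What the paper's longer route buys is an intermediate form manifestly supported in $\mathrm{supp}\,\chi'_\ell$ and an integrand already in $\hbar D_\sigma$-form, which streamlines the final asymptotic evaluation; what your route buys is brevity and transparency in why the interior term disappears.
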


\begin{proof}
 We have
\[w_{\ell,r}=\langle(\mathscr{N}_\hbar-\mu_1^{\rm sw}(\hbar))f_{\hbar,\ell},f_{\hbar,r} \rangle=\langle [\mathscr{N}_\hbar,\chi_{\eta,\ell}]\phi_{\hbar,\ell},\chi_{\eta,r} \phi_{\hbar,r}\rangle\,. \]
For shortness we use in the following the notation $\chi_{r}$ for $\chi_{\eta,r}$ and similarly on the left-side. We recall that  $\chi_{r}$ and $\chi_{\ell}$ do not depend on $\tau$. Thus
\begin{equation}\label{eq.wlr}
w_{\ell,r}=\langle[a_{\hbar}^{-1}\mathscr{D}_\hbar a_\hbar^{-1}\mathscr{D}_\hbar,\chi_\ell]\phi_{\hbar,\ell},\chi_r\phi_{\hbar,r}\rangle\,.
\end{equation}
For shortness, we let $\phi_{\hbar,\alpha}=\phi_{\alpha}$.

In the following we let $S_L=(-L,L)\times(0,+\infty)$. Writing the commutator, integrating by parts, and using the Leibniz formula, we get
\begin{equation*}
\begin{split}
w_{\ell, r}&=\int_{S_L}\left( \mathscr{D}_\hbar(a_\hbar^{-1}\mathscr{D}_\hbar(\chi_\ell\phi_\ell))\chi_r\overline{\phi_r}-\chi_\ell\chi_r\overline{\phi_r}\mathscr{D}_\hbar(a_\hbar^{-1}\mathscr{D}_\hbar \phi_\ell)\right) \dd\sigma\dd\tau\\
&=\int_{S_L} a_\hbar^{-1}\left(\mathscr{D}_\hbar(\chi_\ell\phi_\ell)\overline{\mathscr{D}_\hbar}(\chi_r\overline{\phi_r})-\mathscr{D}_\hbar\phi_\ell\overline{\mathscr{D}_\hbar}(\chi_\ell\chi_r \overline{\phi_r})\right)\dd \sigma\dd\tau\\
&=\int_{S_L} a_\hbar^{-1}\left(\left[-i\hbar\chi'_\ell\phi_\ell+\chi_\ell\mathscr{D}_\hbar\phi_\ell\right]\overline{\mathscr{D}_\hbar}(\chi_r\overline{\phi_r})-\mathscr{D}_\hbar\phi_\ell\left[\chi_\ell\overline{\mathscr{D}_\hbar}(\chi_r\overline{\phi_r})+i\hbar\chi'_\ell\chi_r\overline{\phi_r}\right]\right)\dd\sigma\dd\tau\\
&=-i\hbar\int_{S_L} a_\hbar^{-1}\chi'_\ell\left(\phi_\ell\overline{\mathscr{D}_\hbar}(\chi_r\overline{\phi_r})+\mathscr{D}_\hbar\phi_\ell\left[\chi_r\overline{\phi_r}\right]\right)\dd\sigma\dd\tau\\
&=i\hbar\int_{S_L} a_\hbar^{-1}\chi'_\ell\chi_r\left(\phi_\ell\overline{\mathscr{D}_\hbar\phi_r}+\mathscr{D}_\hbar\phi_\ell\overline{\phi_r}\right)\dd\sigma\dd\tau\\
&=i\hbar\int_{S_L} a_\hbar^{-1}\chi'_\ell\left(\phi_\ell\overline{\mathscr{D}_\hbar\phi_r}+\mathscr{D}_\hbar\phi_\ell\overline{\phi_r}\right)\dd\sigma\dd\tau\,,
\end{split}
\end{equation*}
where we have used $\chi'_\ell\chi'_r=0$ and $\chi'_\ell\chi_r=\chi'_\ell$. Note also that $\chi'_\ell$ is supported in $(-L,0)$. We let $\tilde\phi_\alpha(\sigma, \tau)=e^{i\gamma(\sigma,\tau)/\hbar}\phi_\alpha(\sigma,\tau)$ on $S_L$, where $\gamma$ satisfies $\partial_\sigma\gamma(\sigma,\tau)=\tau-\gamma_0/\hbar-\hbar c_\mu \kappa\frac{\tau^2}{2}$. Using this change of function, we get
\[
w_{\ell,r}=-\int_{S_r} a_\hbar^{-1}\hbar D_\sigma\chi_\ell\left(\tilde\phi_\ell\overline{\hbar D_\sigma\tilde\phi_r}+\hbar D_\sigma\tilde\phi_\ell\overline{\tilde\phi_r}\right)\dd\sigma\dd\tau\,,
\]
where $S_r=(-L,0)\times\R_+$. Then, by integration by parts,
\begin{multline}\label{eq.wtilde}
w_{\ell,r}=\tilde w_{\ell,r}
+i\hbar\int_{0}^{+\infty}a_\hbar^{-1}\left(\tilde\phi_\ell\ \overline{\hbar D_\sigma\tilde\phi_r}+\hbar D_\sigma\tilde\phi_\ell\ \overline{\tilde\phi_r}\right)(0,\tau)\\
-a_\hbar^{-1}\left(\tilde\phi_\ell\ \overline{\hbar D_\sigma\tilde\phi_r}+\hbar D_\sigma\tilde\phi_\ell\ \overline{\tilde\phi_r}\right)(-L,\tau)\dd\tau\,,
\end{multline}
with
\[\tilde w_{\ell,r}=\int_{S_r}\chi_\ell \hbar D_\sigma \left[ a_\hbar^{-1}\left(\tilde\phi_\ell\ \overline{\hbar D_\sigma\tilde\phi_r}+\hbar (D_\sigma\tilde\phi_\ell)\ \overline{\tilde\phi_r}\right)\right]\dd\sigma\dd\tau\,.\]
Note that
\[\tilde w_{\ell,r}=\int_{S_r} \chi_\ell \left(-\tilde\phi_\ell\overline{(\hbar D_\sigma  a_\hbar^{-1}\hbar D_\sigma)\tilde\phi_r}+(\hbar D_\sigma  a_\hbar^{-1}\hbar D_\sigma)\tilde\phi_\ell\ \overline{\tilde\phi_r}\right)\dd\sigma\dd\tau\,,\]
and, coming back to $\phi_\alpha$,
\[\tilde w_{\ell,r}=\int_{S_r} \chi_\ell \left(-\phi_\ell\overline{\mathscr{D}_\hbar a_\hbar^{-1}\mathscr{D}_\hbar\phi_r}+\mathscr{D}_\hbar a_\hbar^{-1}\mathscr{D}_\hbar\phi_\ell\ \overline{\phi_r}\right)\dd\sigma\dd\tau\,.\]
Using the fact that the $\phi_\alpha$ are eigenfunctions associated with the same eigenvalue, we get $\tilde w_{\ell,r}=0$. From \eqref{eq.wtilde}, we deduce that
\begin{multline*}
w_{\ell,r}=
i\hbar\int_{0}^{+\infty}a_\hbar^{-1}\left(\phi_\ell\overline{\mathscr{D}_\hbar\phi_r}+\mathscr{D}_\hbar\phi_\ell\overline{\phi_r}\right)(0,\tau)\\
-a_\hbar^{-1}\left(\phi_\ell\overline{\mathscr{D}_\hbar\phi_r}+\mathscr{D}_\hbar\phi_\ell\overline{\phi_r}\right)(-L,\tau)\dd\tau\,.
\end{multline*}
\end{proof}

\subsubsection{End of the proof of Theorem \ref{thm.tunnel}}\label{sec.fin}
Let us explain how to analyze the asymptotic behavior of the first term, related to the upper part of $\Gamma$, in \eqref{eq.ultime}:
\[w_{\ell,r}^u=\int_{0}^{+\infty}a_\hbar^{-1}\left(\phi_\ell\overline{\mathscr{D}_\hbar\phi_r}+\mathscr{D}_\hbar\phi_\ell\overline{\phi_r}\right)(0,\tau)\dd\tau\,.\]
Note that $a_\hbar=1+o(1)$, $|\hbar c_\mu\tau^2|=o(\hbar^{-2\eta})$. We also recall that $\phi_{\hbar,\ell}$ and $\phi_{\hbar,r}$ are explicitly described in \eqref{eq.phir} and \eqref{eq.phil}. We find that
\begin{multline*}
w_{\ell,r}^u\\
=\int_{0}^{+\infty}a_\hbar^{-1}\left(u_{\hbar,\ell}\overline{(\hbar D_\sigma-\tau+\hbar c_\mu\frac{\kappa}{2}\tau^2)u_{\hbar,r}}+(\hbar D_\sigma-\tau+\hbar c_\mu\frac{\kappa}{2}\tau^2)u_{\hbar,\ell}\overline{u_{\hbar,r}}\right)(0,\tau)\dd\tau\,.
\end{multline*}
Then, we use the uniform approximation given in Proposition \ref{prop:WKB-app}, the explicit expression of the WKB Ansatz in Theorem \ref{thm.BKW}, and the fact that $\Phi_r(0)+\Phi_{\ell}(0)=\mathsf{S}_{\mathsf{u}}$ to get
\begin{multline*}
w_{\ell,r}^u=\\
\int_{0}^{+\infty}a_\hbar^{-1}\left(\Psi_{\hbar,\ell}\overline{\left(\hbar D_\sigma-\tau+\hbar c_\mu\frac{\kappa}{2}\tau^2\right)\Psi_{\hbar,r}}+\left(\hbar D_\sigma-\tau+\hbar c_\mu\frac{\kappa}{2}\tau^2\right)\Psi_{\hbar,\ell}\overline{\Psi_{\hbar,r}}\right)\dd\tau\\
+\mathscr{O}(\hbar^\infty)e^{-\mathsf{S}_{\mathsf{u}}/\hbar^{1/2}}\,,
\end{multline*}
where $\Psi_{\hbar,\ell}(0,\tau)=U\Psi_{\hbar,r}(0,\tau)$ (there is no phase shift since we are at $\sigma=0$). Using again that $\Phi_r(0)+\Phi_{\ell}(0)=S_{\mathsf{u}}$ and the explicit expression of the first term of $\Psi_{\hbar,r}$, we get
\begin{multline*}
\hbar^{\frac14}e^{\mathsf{S}_{\mathsf{u}}/\hbar^{1/2}}w_{\ell,r}^u=\int_{0}^{+\infty}a_\hbar^{-1}Ub_{1,\hbar}\overline{\left(\hbar D_\sigma+\xi_0-\tau+i\hbar^{\frac12}\Phi'_r(0)+\hbar c_\mu\frac{\kappa}{2}\tau^2\right)b_{1,\hbar}}\dd\tau\\
+\int_0^{+\infty}a_\hbar^{-1}\left(\hbar D_\sigma+\xi_0-\tau+i\hbar^{\frac12}\Phi'_{\ell}(0)+\hbar c_\mu\frac{\kappa}{2}\tau^2\right)Ub_{1,\hbar}\overline{b_{1,\hbar}}\dd\tau
+\mathscr{O}(\hbar^\infty)\,.
\end{multline*}
By using that $a_\hbar=1+o(1)$ and the exponential decay of $b_{1,\hbar}$ with respect to $\tau$, we find that
\begin{equation*}
\begin{split}
\hbar^{\frac14}e^{\mathsf{S}_{\mathsf{u}}/\hbar^{1/2}}w_{\ell,r}^u=&\int_{0}^{+\infty}\left(\xi_0-\tau-i\hbar^{\frac12}\Phi'_r(0)\right)Ub_{1,\hbar}\overline{b_{1,\hbar}}(0,\tau)\dd\tau\\
&+\int_0^{+\infty}\left(\xi_0-\tau+i\hbar^{\frac12}\Phi'_{\ell}(0)\right)Ub_{1,\hbar}\overline{b_{1,\hbar}}(0,\tau)\dd\tau
+\mathscr{O}(\hbar)\\
=& 2\int_{0}^{+\infty} (\xi_0-\tau)Ub_{1,\hbar}\overline{b_{1,\hbar}}(0,\tau)\dd\tau-2i\hbar^{\frac12}\Phi'_r(0)\tilde f_{1,0}^2(0)e^{-2i\alpha_{1,0}(0)}\\
&+\mathscr{O}(\hbar)\,,
\end{split}
\end{equation*}
where we used the explicit expression of the first term of $b_{1,\hbar}$ given in \eqref{eq.an0} and Remark \ref{rem.tildef}. Let us now replace $b_{1,\hbar}$ by its first two terms $b_{1,0}+\hbar^{\frac12}b_{1,1}$ (modulo $\mathscr{\hbar}$) given in \eqref{eq.an0}. We recall the following two formulas (see, for instance, \cite[Lemma 5.8]{BHR16}):
\[\int_0^{+\infty}(\xi_0-\tau)u^2_{\xi_0}(\tau)\dd\tau=0\,,\quad 1+2\int_0^{+\infty}(\xi_0-\tau)u_{\xi_0}(\tau) (\partial_{\xi}u)_{\xi_0}(\tau)\dd\tau=\frac{\mu''_1(\xi_0)}{2}\,.\]
With the first formula, we get
\[\begin{split}
2\int_{0}^{+\infty} (\xi_0-\tau)Ub_{1,\hbar}\overline{b_{1,\hbar}}\dd\tau=&2\hbar^{\frac12}\int_{0}^{+\infty}(\xi_0-\tau)\left( Ub_{1,0}\overline{b_{1,1}}+Ub_{1,1}\overline{b_{1,0}}\right)\dd\tau+\mathscr{O}(\hbar)\\
=&-4i\tilde f^2_{1,0}(0)\Phi'_r(0)e^{-2i\alpha_{1,0}(0)}\int_0^{+\infty}(\xi_0-\tau)u_{\xi_0}(\tau)v_{\xi_0}(\tau)\dd\tau\\
&+\mathscr{O}(\hbar)\,.
\end{split}\]
With the second formula, we deduce that
\begin{equation} \label{eq.explicitcomptuwlru}
\hbar^{\frac14}e^{\mathsf{S}_{\mathsf{u}}/\hbar^{1/2}}w_{\ell,r}^u
=-\mu_1''(\xi_0)i\hbar^{\frac12}\Phi'_r(0)\tilde f_{1,0}^2(0)e^{-2i\alpha_{1,0}(0)}+\mathscr{O}(\hbar)\,.
\end{equation}
We recognize here the interaction term associated with the effective Hamiltonian \eqref{eq.effectiveH} (see also \eqref{eq:we}). Using \eqref{eq.normtildef10}, and differentiating at $0$ formula \eqref{eq.defPhi}, the estimate \eqref{eq.explicitcomptuwlru} becomes
\[\hbar^{\frac14}e^{\mathsf{S}_{\mathsf{u}}/\hbar^{1/2}}w_{\ell,r}^u=-i \hbar^{\frac12}\mu_1''(\xi_0) \pi^{-\frac 12} g^{\frac12}\sqrt{V(0)} \mathsf{A}_{\mathsf{u}}e^{-2i\alpha_{1,0}(0)}+\mathscr{O}(\hbar)\,.\]
In the same way, we can deal with the integral corresponding to the down part of the boundary in \eqref{eq.ultime}:
\[w_{\ell,r}^d=-\int_{0}^{+\infty}a_\hbar^{-1}\left(\phi_\ell\overline{\mathscr{D}_\hbar\phi_r}+\mathscr{D}_\hbar\phi_\ell\overline{\phi_r}\right)(-L,\tau)\dd\tau\,,\]
and we find that
\[\hbar^{\frac14}e^{\mathsf{S}_{\mathsf{d}}/\hbar^{1/2}}w_{\ell,r}^d= -i \hbar^{\frac12}\mu_1''(\xi_0) \pi^{-\frac 12} g^{\frac12}\sqrt{V(-L)} \mathsf{A}_{\mathsf{d}}e^{-2i\alpha_{1,0}(-L)}e^{i\left(-2L\gamma_0/\hbar^2+2L\xi_0/\hbar\right)}+\mathscr{O}(\hbar)\,,\]
where there is an additional phase shift coming from the $\pm 2L$-translation in $\sigma$ appearing in the second and first expression in \eqref{eq.phir} and \eqref{eq.phil}, respectively.

Now recalling that, from \eqref{eq.ultime},
\[w_{\ell,r}=i\hbar(w_{\ell,r}^u+w_{\ell,r}^d)\,,\]
that $\hbar=h^{\frac 12}$, and multiplying by $e^{iLf(h)} $ in order to have a more symmetric formula (only the modulus of this quantity is relevant),  we get the expression $\tilde{w}(h)$ given in the statement of Theorem \ref{thm.tunnel}. Then, Theorem \ref{thm.tunnel} follows from \eqref{eq.tunnel0} and Proposition \ref{prop.redNh}.

\bibliographystyle{abbrv}
\bibliography{bibBHR}

\end{document}